\definecolor{blueR}{RGB}{34, 151, 230}
\definecolor{purpleR}{RGB}{206,164,255}
\newcommand{\quotient}[2]{{\raisebox{.2em}{$#1$}\big/\raisebox{-.2em}{$#2$}}}
\newtheorem {Proposition}{Proposition}[section]
\newtheorem {Lemma}[Proposition] {Lemma}
\newtheorem {Theorem}[Proposition]{Theorem}
\newtheorem {Corollary}[Proposition]{Corollary}
\newtheorem {Remark}[Proposition]{Remark}
\newtheorem {Example}{Example}[section]
\newtheorem {Definition}{Definition}[section]
\newtheorem {Assumption}{Assumption}[section]
 \def\N{\mathbb{N}}
\def\R{\mathbb{R}}
\def\E{\mathbb{E}}
\def\x{\mathbf{x}}
\def\y{\mathbf{y}}
\def\zar{\mathcal Z}
\def\E{\mathbb E}
\def\N{\mathbb N}
\def\x{\mathbf x}
\def\y{\mathbf y}
\newcommand{\eps}{\varepsilon}
\renewcommand{\vec}{\operatorname{vec}}
\newcommand{\sym}{\operatorname{sym}}
\providecommand{\nelem}{ {\kappa_{d,g}}}
\providecommand{\Cov}{\operatorname{Cov}}
\renewcommand{\y}{y}
\renewcommand{\x}{x}
\begin{document}


\begin{frontmatter}
\title{Estimation of Algebraic Sets: Extending PCA Beyond Linearity}
\runtitle{Estimation of Algebraic Sets}
\runauthor{Gonz\'alez-Sanz, Mordant, Samperio and Sen}

\begin{aug}
\author[A]{\fnms{Alberto}~\snm{González-Sanz}\ead[label=e1]{ag4855@columbia.edu}},
\author[M]{\fnms{Gilles}~\snm{Mordant}\ead[label=e2]{gilles.mordant@yale.edu}}, 
\author[S]{\fnms{Álvaro}~\snm{Samperio}\thanks{Partially supported by grants PID2021-122501NB-I00 and PID2022-138906NB-C21 funded by MICIU/AEI/ 10.13039/501100011033 and by ERDF/EU. Partially supported by the ALAMA network (RED2022-134176-T (MCI/AEI)).}$^,$\ead[label=e3]{alvaro.samperio@upc.edu}}
\and
\author[B]{\fnms{Bodhisattva}~\snm{Sen}\thanks{Supported by NSF grants DMS-2311062 and DMS-2515520.}$^,$\ead[label=e4]{b.sen@columbia.edu}}
\address[A]{Department of Statistics,
Columbia University, New York, USA,\\ \printead{e1}}
\address[M]{Applied \& Computational Mathematics, Yale University, New Haven, USA \\ \printead{e2}}
\address[S]{Department of Mathematics, Universitat Politècnica de Catalunya - BarcelonaTech (UPC), Barcelona, Spain \printead{e3}}

\address[B]{Department of Statistics, Columbia University, New York, USA, \printead{e4}}

\end{aug}

\begin{abstract}
An algebraic set is defined as the zero locus of a system of real polynomial equations. In this paper we address the problem of recovering an unknown algebraic set $\mathcal{A}$ from noisy observations of latent points lying on 
$\mathcal{A}$---a task that extends principal component analysis, which corresponds to the purely linear case. Our procedure consists of three steps: (i) constructing the {\it moment matrix} from the Vandermonde matrix associated with the data set and the degree of the fitted polynomials, (ii) debiasing this moment matrix to remove the noise-induced bias, (iii) extracting its kernel via an eigenvalue decomposition of the debiased moment matrix. These steps yield $n^{-1/2}$-consistent estimators of the coefficients of a set of generators for the ideal of polynomials vanishing on $\mathcal{A}$. To reconstruct $\mathcal{A}$ itself, we propose three complementary strategies:
(a) compute the zero set of the fitted polynomials; (b) build a semi-algebraic approximation that encloses $\mathcal{A}$;
(c) when structural prior information is available, project the estimated coefficients onto the corresponding constrained space. We prove (nearly) parametric asymptotic error bounds and show that each approach recovers $\mathcal{A}$ under mild regularity conditions.

\end{abstract}

\begin{keyword}[class=MSC]
\kwd[Primary ]{62R01}  
\kwd{62H25} 
\end{keyword}

\begin{keyword}
\kwd{Algebraic set estimation}
\kwd{debiased moment matrix}
\kwd{estimation of the kernel of a matrix}
\kwd{manifold learning}
\kwd{semi-algebraic sets}
\kwd{signal-plus-noise model}
\kwd{tensor moments}
\kwd{Vandermonde matrix}
\kwd{Zariski closure}
\end{keyword}

\end{frontmatter}

\maketitle


\section{Introduction}
\label{intro}

Recovering low-dimensional structures corrupted by noise is a central theme in modern statistics. In this work we consider the classical signal-plus-noise model 
\begin{equation}\label{Model}
X_i = \theta_i + \eps_i, \quad i=1,\ldots, n, \ldots  \qquad {\rm with}\quad  \{\eps_i\}_{i\ge 1} \stackrel{\textrm{iid}}{\sim} \mathcal{N}(0,\Sigma),
\end{equation}
where the observed data is $\{X_i \}_{i=1}^n \subseteq \R^d$, $\Sigma$ is a nonnegative definite $d \times d$ covariance matrix, and  the latent (unobserved) signal sequence $\{\theta_i\}_{i\geq 1}$  lies on a ``lower dimensional'' {\it algebraic set} $\mathcal{A} \subseteq \R^d$. By definition, $\mathcal{A}$ is 
the set of common zeros of a collection of polynomials, i.e., 

\begin{equation}\label{eq:A}
\mathcal{A} := \{x \in \R^d: P_j(x) = 0, \text{ for } j = 1,\ldots, k\} 
\end{equation}
for some polynomials $P_1,\ldots, P_k$ (for a given $k$). Lines, circles, curves, surfaces,  higher-dimensional hypersurfaces, etc., can be examples of the algebraic set $\mathcal{A}$; in the following we give two prototypical examples of algebraic sets that we will study in this paper. 
\begin{figure}[h!]
    \centering
\includegraphics[width=0.32\linewidth]{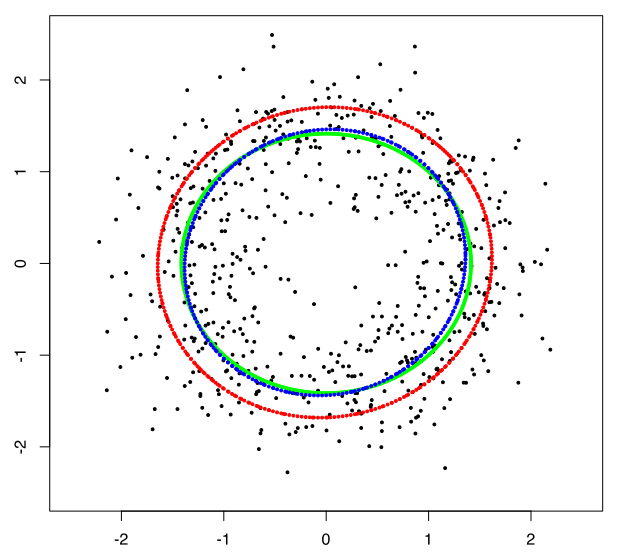}    
\includegraphics[width=0.32\linewidth]{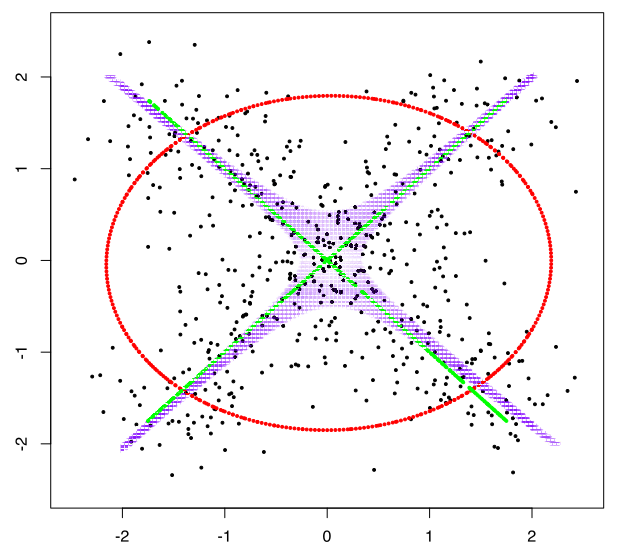}
\includegraphics[width=0.32\linewidth]{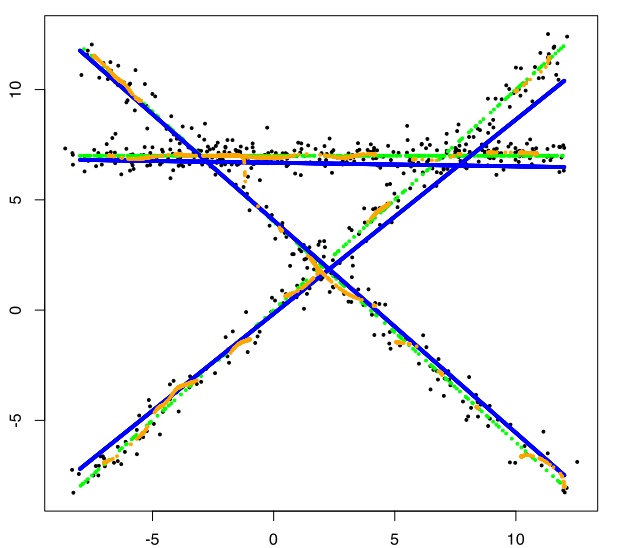}
     \fbox{ 
 \textcolor{green}{ $\bullet$} {\footnotesize $\theta_i$'s}\quad
 \textcolor{black}{$\bullet$} {\footnotesize $X_i$'s}\quad
 \textcolor{blue}{\bf\textemdash} {\footnotesize our method }\quad
    \textcolor{red}{\bf\textemdash} {\footnotesize naive approach }\quad
    \textcolor{purpleR}{\bf\textemdash} {\footnotesize semi algebraic set }\quad
    \textcolor{orange}{\bf\textemdash} {\footnotesize mean shift algo. }}
        \caption{\label{fig:IntroCircle} In each plot the latent $\theta_i$'s are displayed in green, while the observations $X_i$'s are denoted by black dots (we take $\Sigma = (0.4)^2 I_2$). {\bf Left}: The red and the blue curves denote the estimated algebraic set using the inconsistent naive approach and our proposed methodology (a), respectively. {\bf Middle}: A similar plot for Example~\ref{ex: 2}. The semi-algebraic tube estimator is displayed in purple. {\bf Right}: Here the true algebraic set is `three-intersecting-lines' and the estimated algebraic set, obtained by projecting the coefficients of the estimated polynomials to incorporate this known structure, is given in blue. The output of the mean-shift algorithm~\cite{400568} is depicted in orange (the bandwidth matrix for the kernel density estimator in the method is set to the actual noise $\sigma I$). In all the above simulations we take $n=600$ and $\sigma=0.4$. }
\end{figure}
\begin{Example}[Circle in $\R^2$]
\label{ex: 1} 
Let $\{\theta_i\}_{i\ge 1}$ lie on the unit circle $\mathcal{S}^1 :=\{ (x, y)\in \R^2: \ x^2+y^2-1=0\}$,
and we observe $\{{ X}_i\}_{i=1}^n$ from model~\eqref{Model}. The circle is a simple yet instructive algebraic set. A classical example arises from an undamped harmonic oscillator: the pair 
(position,velocity)
traces out a circle or an ellipse, so noisy measurements of that pair may produce the scatter plot shown in the left panel of Figure~\ref{fig:IntroCircle}.

\end{Example}
\begin{Example}[`Cross' in $\R^2$]\label{ex: 2} 
    Consider data points $\{{ X}_i\}_{i=1}^n$ generated from $\{\theta_i\}_{i=1}^n$ lying on the ``cross''  $\{(x, y)\in \R^2 :\  (x+y) (x-y)=0\}$  in $\R^2$ (see the center plot in Figure~\ref{fig:IntroCircle}). A general pair of intersecting lines in $\R^2$ can be represented as an algebraic set of the form  
    $$ \big\{(x, y)\in \R^2 :\  (ax+by+c)(a'x+b'y+c')=0\big\}  $$
    with $a,b,c,a', b', c'\in \R $ and $ |a|+|b|, \ |a'|+|b'|>0.$ Cross-shaped ``filament'' structures arise in astronomy, where galaxies align along intersecting ridges of the cosmic web \cite{genovese2012geometry,chen2017detecting}. Statistically, they correspond to a mixture of two linear regressions with measurement error in both variables \cite{Yao18042015}. Our method extends naturally to any finite mixture of such regressions, as illustrated in the right panel of Figure~\ref{fig:IntroCircle}.

\end{Example}

\begin{center}
{\emph{The statistical task we consider in this paper is to recover the smallest algebraic set\footnote{In particular, our interest is in the Zariski closure (see Definition~\ref{defn:Zariski}) of the set $\{\theta_i\}_{i \ge 1}$.} $\mathcal{A}$ containing all the $\theta_i$'s given data from model~\eqref{Model}}.}
\end{center}

Estimating algebraic sets extends principal component analysis (PCA) from linear to nonlinear geometry. PCA fits the best-approximating linear subspace $L$ to the data, which is itself the common zero set of $k :=d - \mathrm{dim}(L)$ linear equations (say, $b_j^\top x =0$, for $j = 1,\ldots, k$). In other words, PCA already fits an algebraic set---but only of degree 1. Allowing higher-degree polynomials yields far richer shapes.

Interest in learning such algebraic sets has surged since the seminal work of Breiding and Sturmfels~\cite{breiding2018learning,breiding2024metric}. Algebraic sets can represent a wide array of structures~\cite{breiding2018learning} and have found applications in systems biology \cite{gross2016numerical,dickenstein2020algebraic}, quantum chemistry \cite{faulstich2023algebraic}, power-grid modeling \cite{samperio2023sparse}, and many other fields.

Before explaining our approach to recovering the algebraic set $\mathcal{A}$ in the noisy setting, let us comment on the noiseless case; i.e., when $\Sigma =0$ in~\eqref{Model}, which was studied in \cite{pauwels2021data}. 
Consider the example of the unit circle $\{(x,y)\in \R^2:\ x^2+y^2=1\}$. As we assume that each latent point $\theta_i=(\theta_{i,1}, \theta_{i,2})\in \R^2$ lies on the unit circle, it holds that 
\[
\begin{pmatrix} 
1 &  \theta_{1,1} &  \theta_{1,2} &  \theta_{1,1}\theta_{1,2} &  \theta_{1,1}^2 &  \theta_{1,2}^2 \\
1 &  \theta_{2,1} &  \theta_{2,2} &  \theta_{2,1}\theta_{2,2} &  \theta_{2,1}^2 &  \theta_{2,2}^2 \\
 \vdots & \vdots & \vdots & \vdots & \vdots & \vdots \\
1 &  \theta_{n,1} &  \theta_{n,2} &  \theta_{n,1}\theta_{n,2} &  \theta_{n,1}^2 &  \theta_{n,2}^2
\end{pmatrix} \begin{pmatrix}
-1\\ 0\\ 0\\ 0\\ 1\\ 1 
\end{pmatrix} =\begin{pmatrix}
0\\ 0\\ \vdots\\ 0  
\end{pmatrix} \in \R^n. 
\]
Said differently, the vector $(-1,0,0,0,1,1)^\top \in \R^6$ in the kernel of the matrix in the above display\textemdash called a \emph{(multivariate) Vandermonde matrix}\textemdash is the set of coefficients of the polynomial $x^2+y^2-1$ and determines the algebraic set. 

More generally, for any algebraic set 
$\mathcal{A}$ the coefficients of all polynomials that vanish on the latent points 
$\{\theta_i\}_{i \ge 1}$ lie in the kernel of an appropriately chosen Vandermonde matrix. Under mild conditions, this kernel---and hence the defining equations of $\mathcal{A}$---can be recovered from sufficiently many noiseless samples~\cite{pauwels2021data}.

In the case of noisy data, \cite{breiding2018learning,breiding2024metric} suggest recovering the underlying algebraic set by forming the Vandermonde matrix based on the noisy observations $X_i$'s and approximating its kernel with numerical linear–algebra techniques.

Their empirical results show the method can tolerate mild noise, but a rigorous stability analysis remains open. In fact, we show that the presence of noise {\it biases} the empirical Vandermonde matrix (see Lemma~\ref{Lemma:StableIdeal} and Corollary~\ref{Coro:faliureOfDeterministic}) which prevents this method from consistently recovering the coefficients of the underlying polynomials, even as the sample size grows. As a result, when the noise level is not sufficiently small, the algebraic set recovered by this approach may deviate substantially from the true set $\mathcal{A}$.

Figure \ref{fig:IntroCircle} (left) illustrates this mismatch. The red curve plots the estimated algebraic set obtained by approximating the kernel of the Vandermonde matrix constructed from the $X_i$'s; 
the green curve shows the ground-truth algebraic set, the unit circle, which is noticeably different from the red curve. Also see the middle plot of Figure \ref{fig:IntroCircle} which considers Example~\ref{ex: 2}; here the true algebraic set (in green) and the estimated algebraic set (in red) have very different structure.

---------------------------------------------------------------------------------------------------------------------------------------------------------------------------------
In this paper, we address the problem of constructing estimators that consistently recover the algebraic set 
$\mathcal{A}$ as the sample size increases, when we have noisy data from model~\eqref{Model}. Our estimation strategy proceeds as follows: (i) forming the {\it moment matrix} from the Vandermonde matrix associated with the data and the degree of the fitted polynomials,\footnote{If we denote by $\mathbb{V}$ the Vandermonde matrix obtained from the $X_i$'s, then the moment matrix is $\mathbb{V}^\top \mathbb{V}/n$; it has the same kernel as $\mathbb{V}$.} (ii) appropriately debiasing this moment matrix, and (iii) performing an eigenvalue decomposition of this debiased moment matrix to `approximate' its kernel. More precisely, we use the eigenvectors corresponding to the `near-zero'  eigenvalues of the debiased moment matrix to define the polynomials whose zeros approximate the true algebraic set $\mathcal{A}$ (see Algorithm~\ref{alg:two}). This procedure yields a $n^{-1/2}$-consistent estimator of the coefficients of the polynomials vanishing at the latent sequence $\{\theta_i\}_{i\ge 1}$ (see Theorem~\ref{Theorem:main}).

Estimating the coefficients of the vanishing polynomials does not immediately yield consistent estimators of their zeros (i.e., the algebraic set). Consider the sequence of polynomials $P_n(x)=x^2+\frac{1}{n}$ which obviously converges to $P(x) = x^2$ but $P_n$ does not vanish anywhere in $\R$ while $P$ vanishes at $0$. To estimate the true algebraic set $\mathcal{A}$, we propose three estimation strategies, each suited to different assumptions about the structure of 
$\mathcal{A}$: 
\begin{itemize}
    \item[(a)] 
    {\it Zero set of the estimated polynomials}. Suppose that Algorithm~\ref{alg:two} yields the following estimated polynomials $\hat P_1, \ldots, \hat P_{\hat k}$---whose coefficients are the eigenvectors corresponding to the `near-zero' eigenvalues of the debiased moment matrix. A natural estimator of the true algebraic set $\mathcal{A}$ is then $\hat{\mathcal{A}} := \{x \in \R^d: \hat P_j (x) = 0 \mbox{ for } \; j = 1,\ldots, \hat k\}$ (cf.~\eqref{eq:A}). Under strong assumptions on $\mathcal{A}$, this approach leads to a provably consistent 
    estimator of $\mathcal{A}$. Furthermore, locally, around regular points (see Definition~\ref{defn:regular-pt}) in $\mathcal{A}$, this approach leads to the parametric ($n^{-1/2}$) rate of convergence, as the sample size increases (see Theorem~\ref{Theorem:recoveringSet}). The left plot of Figure~\ref{fig:IntroCircle} illustrates this approach; as evident from the plot, the blue curve (obtained from this procedure) nearly coincides with the true algebraic set (shown in green).

    \item[(b)] 
    {\it Semi-algebraic ``tube'' estimator.} After estimating polynomials---$\hat P_1, \ldots, \hat P_{\hat k}$---we consider the semi-algebraic set $\{x\in \R^d: |\hat P_i (x)|\leq \lambda \text{ for all $i=1, \dots, {\hat k} $} \}$, for a suitable $\lambda >0$. Thus the semi-algebraic set consists of all `approximate' zeros of the estimated polynomials. This estimated set is a consistent estimator of $\mathcal{A}$ and converges locally around regular points, at an almost parametric rate (see Theorem~\ref{Theorem:completeRecovery}) under mild assumptions. This semi-algebraic `tube' estimator is shown in purple in the middle plot of Figure~\ref{fig:IntroCircle} which corresponds to Example~\ref{ex: 2}.

    \item[(c)] 
    {\it Structure-aware projection.} When we have strong prior knowledge about the shape of $\mathcal{A}$, we can refine the initial polynomial estimate by projecting its coefficients onto the set that respects that structure. In the ``three-intersecting-lines'' example (Figure \ref{fig:IntroCircle}, right), the true set is the zero locus of a cubic that factorizes into three affine terms; an unconstrained estimate need not share this factorization. By forcing the projected coefficients to satisfy the factorization constraint, we obtain a sharper, structure-compatible estimate. Section \ref{sec: Prior} details this method, and Theorem \ref{Theorem:recoveringSet-Prior} proves its consistency under appropriate  assumptions.
    
\end{itemize}

To the best of our knowledge, this is the first work to study estimation and inference for algebraic sets within the general statistical framework of \eqref{Model}. Our estimation procedure is deliberately simple: it reduces to an eigen-decomposition of a suitably debiased moment matrix, thereby generalizing the familiar mechanics of PCA. Crucially, the method remains reliable in {\it high-noise} regimes \cite{lo2024method}---situations where kernel density based approaches often break down. For example, the rightmost panel of Figure~\ref{fig:IntroCircle} shows the mean-shift algorithm \cite{400568} (orange) failing under heavy noise, whereas our estimator retains its structure. Moreover, we obtain estimators of $\mathcal{A}$ that converge at the parametric rate (up to logarithmic factors in some cases), underscoring both the statistical efficiency and practical appeal of the approach.

Let us now discuss some of the technical novelties of our work. Following the approach of~\cite{pauwels2021data}, in contrast to \cite{breiding2018learning}, we shift the focus from the Vandermonde matrix to the {\it empirical moment matrix}, which is more amenable to statistical analysis. Our key contribution is an exact characterization of the bias in the empirical moment matrix which leads to the construction of an explicit debiased version $\hat{\mathbb{M}}_g$ (see~\eqref{eq: MomDec} and~\eqref{eq: MomDec-2} and Theorem~\ref{theorem:representation}). Further, we prove that, by the concentration of measure phenomenon, the kernel of $\hat{\mathbb{M}}_g$ is close to that of the moment matrix based on the latent points. 

Our procedure builds on the method of moments approach with an isomorphism between the Vandermonde matrix and collections of tensor moments and is formalized in Lemma~\ref{lem: isomorp}; see Remark~\ref{ex: Circle} for an illustration for Example~\ref{ex: 1}. This representation allows us to prove a central limit theorem for the debiased moment matrix (Section~\ref{sec: Cons+CLT}, Theorem~\ref{Theorem:CLTMatrix}), which in turn underpins our main statistical guarantees, including the consistency of the estimated kernel of $\hat{\mathbb{M}}_g$ (Theorem~\ref{Theorem:main}).

Section \ref{subsection:recoverSet} presents, to our knowledge, the first consistency results for algebraic-set estimators in the Painlevé–Kuratowski topology, a standard notion of set convergence in variational analysis~\cite[Chapter~4]{RockafellarWets}. Further, we derive a parametric rate of convergence (up to logarithmic factors in some cases) in local Hausdorff metric around regular points on the algebraic set $\mathcal{A}$ (i.e., points where the algebraic set behaves locally like a smooth  manifold; see Definition~\ref{defn:regular-pt}). Our proofs leverage the implicit function theorem 
and results from algebraic geometry (see Section~\ref{sec: introAlgGeo} for a review of this fascinating field of mathematics).

Our estimation procedure requires the user to specify the noise covariance $\Sigma$---we assume throughout the paper that $\Sigma$ is known. In the isotropic case, i.e., $\Sigma=\sigma^{2}I_d$, the variance parameter $\sigma^{2}$ is, in principle, identifiable (see Remark~\ref{identifiable}).  Further, for some of the theoretical results in Section~\ref{subsection:recoverSet} we assume knowledge of the minimal degree $g^*$ (see Definition~\ref{def:minDegree}). A full treatment of how to estimate either $\Sigma$ (or $\sigma^{2}$) or $g^*$ is beyond the present scope and we leave it to future work.

\subsection*{Related work}

The estimation strategy we develop builds on the classical method of moments, but leverages higher-order tensor moments to capture the algebraic structure hidden in noise. Similar moment-tensor ideas have yielded optimal procedures in multireference alignment \cite{perry2019sample,bandeira2023estimation} and Gaussian-mixture learning \cite{wu2020optimal,pereira2022tensor}; our contribution is to adapt this strategy to polynomial-set recovery by exploiting the Vandermonde–tensor isomorphism.

Related work on low-dimensional structure recovery includes principal-curve and piecewise-linear fits \cite{hastie1989principal,ozertem2011locally,lo2024method}, union-of-subspace models \cite{ma2008estimation}, and manifold denoising \cite{fefferman2023fitting,landa2023robust, zhang2023manifold,wu2024design}. Topological data analysis offers complementary, topology-based summaries of point clouds and its methods are quite different from ours~\cite{chazal2021introduction,fasy2014confidence}. Vanishing Component Analysis (VCA) \cite{VCALivini2013,Zhang2018IVC,masuya2022vanishingcomponentanalysiscontrastive} also seeks vanishing polynomials but, unlike our approach, does not address measurement noise. Our moment-based, debiasing procedure therefore extends the existing toolkit by providing noise-robust, statistically principled recovery together with an explicit polynomial representation.

As already mentioned, the influential work of Breiding et al.~\cite{breiding2018learning} forms the starting point for our study. Building on it, Pauwels et al.~\cite{pauwels2021data} analyzed the noiseless setting from a statistical perspective and, using heuristic arguments, raised questions about the stability of the recovered kernel when small amounts of noise are present. We advance this line of research by developing a principled method that remains valid for arbitrary noise levels under a Gaussian noise model, thereby extending the scope and practical relevance of the earlier results.

Recovering a smooth manifold underlying noisy data has been studied from geometric, topological, and statistical perspectives, giving rise to a substantial body of work; see, for example, \cite{niyogi2008finding, genovese2012geometry,Aamari-AoS-2019,Li-JRSSB-2022} and the references cited therein.

\subsection*{Organization of the work}

The remainder of the paper is organized as follows. Section~\ref{sec: introAlgGeo} reviews the basics on algebraic geometry and tensor products. In Section~\ref{Sec:Algebra} we
examine the effect of additive noise on the Vandermonde embedding and explain why the naive approach fails. We also introduce our debiasing approach and describe our estimation procedure in Section~\ref{Sec:Algebra}. Section~\ref{sec: Cons+CLT} develops the main theory, proving consistency, rates of convergence, and central-limit theorems for the debiased moment matrix and its estimated kernel. The three approaches to the recovery of the true algebraic set, along with their theoretical properties, are detailed in Section~\ref{subsection:recoverSet}. Section~\ref{sec: conclusion} concludes with a summary and a brief discussion of open questions. The Appendix gives the proofs of the main results and some simulation studies. The core functions used to implement our methodology and generate the figures in this paper are available on \href{https://github.com/gmordant/AlgebraicSetDenoising}{GitHub}.


\section{Algebraic geometry: an introduction}
\label{sec: introAlgGeo}

In this section, we present the key elements of algebraic geometry that will be relevant to our work. For readers seeking introductory texts on the subject, we recommend \cite{Hartshorne1977,Shafarevich1974}. Before delving into the main discussion, we first establish some notation and provide necessary background information.

\subsection{Preliminaries}

In this paper, 
$\N_0 := \{0,1,\ldots\}$ will denote the set of nonnegative integers and $\N := \{1,2,\ldots\}$ the set of natural numbers. The abstract probability space where all our random variables will be defined is denoted by $(\Omega, \mathcal{F},\mathbb{P})$.
The expectation of a real-valued random variable $X$ is denoted by 
$ \E[X] $. The expectation of a vector or matrix-valued random element is to be understood component-wise. For a sequence of random variables $\{X_n\}_{n\in \N}$ taking values in a separable metric space $(\mathcal{X},\mathfrak{d})$, we will use the usual notions of stochastic convergence. In particular, we use $ \overset{\mathbb{P}}{\to}$, $\xrightarrow{\ a.s.  }$, and $\xrightarrow{\ w\ }$ to denote convergence in probability, almost sure convergence, and convergence in distribution, respectively. The space of all Borel probability measures over $\R^d$ is denoted by $\mathcal{P}(\R^d)$. For any $k \in \N$, $\mathcal{P}_{k}(\R^d)$ denotes the space of all Borel probability measures over $\R^d$ with finite $k$-th moment.

The space of matrices of dimension $d_1\times d_2$ over the reals is denoted by $\mathcal{M}_{d_1\times d_2}(\R)$. For a $d_1\times d_2$ matrix $A$ we denote by $ {\rm Ker}(A)$ its kernel, i.e., ${\rm Ker}(A) := \{x \in \R^{d_2}: Ax = 0\}$. 
The Moore--Penrose pseudo-inverse of $A\in\mathcal{M}_{d_1\times d_2}(\R) $ is denoted by $A^\dagger$.  For a matrix $A = (a_{i,j}) \in\mathcal{M}_{d_1\times d_2}(\R) $ and a matrix $B\in\mathcal{M}_{d_3\times d_4}(\R) $ we denote by $A\otimes_K B$ the Kronecker product between $A$ and $B$. That is, $A\otimes_K B$ is the $(d_1d_3)\times (d_2d_4) $ real matrix with block entries
$(a_{i,j}B)$.  For a matrix $A=(a_{i,j}) \in \mathcal{M}_{p\times p}(\R)$,  we define
\(
{\rm vec}(A) :=(a_{1,1},\dots, a_{p,1}, \dots, a_{1,p}, \dots, a_{p,p} ) \in \R^{p^2}.
\)
The Frobenius inner product between two matrices $A,B\in \mathcal{M}_{p\times p}(\R)$ is given by \( \langle A, B\rangle_{{\rm Fr}} :=\operatorname{tr} (A^\top B)= \vec(A)^\top \vec(B) \)
and the associated norm is denoted by $\|\cdot\|_{{\rm Fr}}$.

We briefly introduce some notation on tensor products. Let $U$ and $V$ be two vector spaces over $\R$.  The tensor product $U\otimes V$ is a vector space over $\R$ equipped with a bilinear map $$ \otimes: U\times V \ni (u,v)\mapsto u\otimes v \in U\otimes V $$ such that if $\mathcal{B}_U$ is any basis of $U$ and $\mathcal{B}_V$ is any basis of $V$ then 
    $ \{ u \otimes v: \ u\in \mathcal{B}_U, \ v\in \mathcal{B}_V \} $ is a basis of $U\otimes V$. The space $U\otimes V$ is unique up to isomorphisms. 
The tensor product of a vector space \(V\) with itself is denoted by \(V \otimes V\). The tensor product of \(V\) with itself $g\in \N$ times is denoted by \[V^{\otimes_g}= \underbrace{V\otimes\cdots \otimes V}_{g\ {\rm times}}.\]
Note that $ V^{\otimes_g}$ is a vector space over $\R$ equipped with a multilinear  map
$$\otimes_g: V^{g} \ni (v_1, \dots, v_g) \mapsto \otimes_g(v_1, \dots, v_g)\in V^{\otimes_g}$$
such that 
    $ \{ \otimes_g(v_1, \dots, v_g):  \ v_1, \dots, v_g\in \mathcal{B}_V \} $ is a basis of $V^{\otimes_g}$.
Further, we use the notation $v^{\otimes_g}=\otimes_g(v, \dots, v) \in V^{\otimes_g}$ for the $g$-th order tensor of $v\in V$.

\subsection{Algebraic sets and the Zariski closure}
\label{Sec:AlgebraicCurves}
Let $\R[x_1, \dots, x_d]$ be the set of all polynomials in $d$ variables over $\R$. The set of all polynomials of degree less than or equal to $g \in \N$ is denoted by $\R_{\leq g}[x_1, \dots, x_d]$. The degree of a polynomial $P$ is denoted by ${\rm deg}(P)$. An {\it ideal} of polynomials is a set $I\subset \R[x_1, \dots, x_d]$ such that if $P_1,P_2\in I$ and $P_3\in \R[x_1, \dots, x_d]$, then $P_1+P_2\in I$ and $P_1P_3\in I$. The ideal $<S>$ generated by a set $S\subset \R[x_1, \dots, x_d]$ is the smallest ideal (in the sense of containment) containing $S$.
    For $ S\subset \R[x_1, \dots, x_d] $ we denote by $$\mathcal{V}(S) := \{ \y\in  \R^d: P(\y)=0, \quad \text{for all }P\in S\}  $$
the set of zeros of all polynomials belonging to $S$. 
A set $V \subset \R^d$  is said to be {\it algebraic} if there exists $ S\subset \R[x_1, \dots, x_d] $ such that $V=\mathcal{V}(S)$. An algebraic set $V$ is {\it reducible} if it can be written as the union of two algebraic sets different from $V$.  An algebraic set $V$ is a {\it variety} if it is not reducible.  For instance, a `cross', as in Example~\ref{ex: 2}, is a reducible algebraic set, whereas the circle in Example~\ref{ex: 1} is an algebraic variety (also see Figure~\ref{fig:IntroCircle}).

\begin{Definition}[Vanishing ideal]\label{def:VanishingIdeal}
Let $V \subset \R^d$ be an algebraic set.  We denote by 
\begin{equation}\label{eq:Ideal}
I(V) :=\{ P\in \R[x_1, \dots, x_d]: \ P({ v})=0, \ \mathrm{for \; all }\;\;{ v}\in V \}
\end{equation}
the {\it vanishing ideal} of $V$. 
\end{Definition}
Thus, the vanishing ideal of $V$ collects all polynomials whose values are identically zero on $V$. Note that $ \mathcal{V}(I(V))=V$. More generally,  for any set $A\subset \R^d$,  $\mathcal{V}(I(A))$ is equal to the Zariski closure (see  Section~\ref{Sec:AlgebraicCurves} below for its definition) of $A$ (cf.~\cite[Proposition~1.2]{Hartshorne1977}). We provide one more example of an algebraic set, which was not covered in the introduction.
\begin{Example}[Low-rank matrices)]
\label{ex: 3} Let $\mathcal{M}_{d_1\times d_2}(\R)$ be the space of matrices of dimension $d_1\times d_2$ with real coefficients. As the determinant of a matrix $A\in \mathcal{M}_{d\times d}(\R)$ is a polynomial, the set of non-injective matrices
$$ \{ A\in \mathcal{M}_{d\times d}(\R): \ {\rm Ker}(A)\neq \{0\}\}=\{ A\in \mathcal{M}_{d\times d}(\R): \ \det(A)=0\}$$
is an algebraic subset of the real vector space $\mathcal{M}_{d\times d}(\R)$.  For $ r\in \{1, \dots, \min(d_1, d_2)\}$ the set 
$$ \{ A\in \mathcal{M}_{d_1\times d_2}(\R): \ {\rm dim}({\rm Range}(A)) \leq r\}=\{ A\in \mathcal{M}_{d_1\times d_2}(\R): \ {\rm Rank}(A)\leq r\}$$
is an algebraic set on the real vector space $\mathcal{M}_{d_1\times d_2}(\R)$. This can be seen by relating the rank of a matrix with its characteristic polynomial, $\det( \lambda I - A)$. The problem of estimating a high-dimensional low-rank matrix corrupted by noise has received a lot of attention in the recent statistics literature; it is often called the spiked model~\cite{johnstone2001distribution,Montanari21}. However, in our situation, we envisage simultaneous estimation of many low-rank matrices observed with error.

\end{Example}

\begin{Definition}[Zariski closure~\cite{Hartshorne1977}]\label{defn:Zariski}
A set $U\subset \R^d$ is closed in the Zariski topology $\zar $ if and only if it is algebraic. The closure of a set $U\subset \R^d$ in the Zariski  topology, denoted by $\overline{U}^{\zar}$, is the smallest algebraic set containing $U$, i.e., 
$$ \overline{U}^{\zar} :=\bigcap \; \{A: \ U\subset A\;\; {\rm and }\;\; A \; \mathrm{is \; algebraic}\}.$$ The Zariski topology on $\R^d$ is defined by taking these closed sets as the closed sets of the topology.
\end{Definition}

Suppose that $\{\theta_i\}_{i\ge 1} \subset \R^2$ is a sequence of distinct points that lies on the upper semicircle of the unit circle (i.e., the $y$-coordinate is always $\ge 0$). The Zariski closure of this sequence is the \emph{entire} unit circle, not just the upper half; thus the Zariski topology cannot ``see'' the difference between a semicircle and the full circle.

Recall  model~\eqref{Model}. The Zariski closure of the sequence $\{\theta_i\}_{i\ge 1} \subset \R^d$ is the main object of interest in this paper and was denoted by $\mathcal{A}$ in~\eqref{eq:A}, which could be a curve, surface, or higher-dimensional algebraic set.\footnote{Note that it is important to look at the Zariski closure of the entire sequence  $\{\theta_i\}_{i\in \N}$ and not that of the finite set of points $\{\theta_i\}_{i =1}^{n}$---the Zariski closure of the finite set $\{\theta_i\}_{i\leq n}$ is itself.}  Equivalently,  $\mathcal{A}$ is the set of zeroes of the ideal   
$$ {\rm I}_\infty :=  \{ P\in \R[x_1, \dots, x_d]: \ P(\theta_i)=0,\quad \mathrm{for \; all } \;\; i\in \N\}$$
of polynomials vanishing at $\{\theta_i\}_{i\ge 1}$
The Hilbert basis theorem implies that every ideal of polynomials is finitely generated.\footnote{The ideal $I$ generated by polynomials $P_1,\ldots, P_k$ can be expressed as $I = \{P \in \R[x_1,\ldots, x_d]: P = \sum_{i=1}^k Q_i P_i, \;\; \mbox{for} \;\; Q_i \in \R[x_1,\ldots, x_d]\}$. In other words, every element of 
$I$ is a finite $\R[x_1,\ldots, x_d]$-linear combination of the generators $P_1,\ldots, P_k$.} Thus, finding any finite set of generators is enough to recover the ideal ${\rm I}_\infty$. Hence, $\mathcal{A}$ can be recovered from some finitely many polynomials $P_1, \dots, P_k \in \R[x_1, \dots, x_d]$, for some $k$, such that 
${\rm I}_\infty=\langle\,\{P_1, \dots, P_k\}\, \rangle $ and the following relationships hold:
\begin{equation}
    \label{eq:AandGenerators}
    \mathcal{A}=\mathcal{V}({\rm I}_\infty)=\mathcal{V}(\langle\, \{P_1, \dots, P_k\}\,\rangle)=\mathcal{V}(\{P_1, \dots, P_k\} ).
\end{equation} 
Hence, we only need to estimate the finite-dimensional vector spaces  of vanishing polynomials
\begin{equation}
    \label{eq:Ig}
    {\rm I}_g := \{ P\in \R_{\leq g}[x_1, \dots, x_d]: \ P(\theta_i)=0\quad \mathrm{for \; all } \;\; i\in \N\}, \quad \text{for } g\in \N,
\end{equation}
which for any
$g \ge \max_{i=1,\ldots, k}\mathrm{deg}(P_i)$ generates the ideal ${\rm I}_\infty$ of the sequence $\{\theta_i\}_{i \in \N}$, i.e., $ {\rm I}_\infty = \langle\, {\rm I}_g\, \rangle$. 
This motivates the following definition.
\begin{Definition}[Minimum degree of $\{\theta_i\}_{i\in \N}$] \label{def:minDegree}
    The smallest $g$ such that ${\rm I}_\infty=\langle \, {\rm I}_g \, \rangle$ is called the minimum degree of $\{\theta_i\}_{i\in \N}$ and is denoted by $g^*$. We note that $\mathcal{V}( {\rm I}_g )=\mathcal{A}$,  for all $g\geq g^*$ (see \cite[Proposition~1.2]{Hartshorne1977}). Further, $g^* \le \max_{i=1,\ldots, k} \mathrm{deg}(P_i)$. 
\end{Definition}

\subsection{The Vandermonde matrix and the Veronese map} 
In the sequel we will use the multi-index notation, i.e., for ${\bf i}=(i_0, i_1, \dots, i_d)\in \N^{d+1}_0$ and $x=(x_1, \dots, x_d)\in \R^d$ we write
\begin{equation}\label{eq:Multi-Idx}
\tilde{x} := (1, x_1, \ldots, x_{d}) \in \R^{d+1} \qquad \mbox{and} \qquad \tilde{x}^{\bf i}:=  1^{i_0} x_1^{i_1}\cdots x_d^{i_d} \in \R
\end{equation}
with the convention $0^0=1$.  Similarly, we consider a subscript version of the multi-index notation, i.e., $a_{\bf i}:= a_{i_0, i_1, \dots, i_d} \in \R. $ For a degree $g\in \N$, let
$\leq_g $ be a total order on the set 
\begin{equation}\label{eq:B-gN}
\mathcal{B}_{d,g} := \Big\{{\bf i}=(i_0, i_1, \dots, i_d)\in \N^{d+1}_0: \  \sum_{j=0}^d i_j= g\Big\}
\end{equation}
and 
\begin{equation}\label{eq:kappa}
\nelem := |\mathcal{B}_{d,g}| = \binom{d+g}{d}
\end{equation}
denote its cardinality.
This total order defines a \emph{monomial order}, which helps characterize polynomials through their coefficients.
\begin{Definition}[Veronese map]\label{defn:Veronese}
Fix $g,d\in \N$.   We call the following mapping the Veronese map~\cite[p.~62]{perrin2008algebraic}: For $\y = (y_1,\ldots, y_d) \in \R^d$,
\begin{equation}
    \label{eq:DefPhi}
      \phi \equiv \phi_{d,g} : \y \;\mapsto \; \phi(y) := (\tilde{ y}^{{\bf i}})_{{\bf i}\in \mathcal{B}_{d,g} }=(y_1^{i_1} \cdots y_d^{i_d})_{(i_0, \dots, i_d)\in \mathcal{B}_{d,g}} 
\in \R^{\nelem},
\end{equation}
where the latter must be understood as
a column vector in $\R^{\nelem}$ with entries ordered according to the total order $\leq_g $.   
\end{Definition}

\begin{Definition}[Vandermonde matrix]\label{defn:Vandermonde}
    The (multivariate) Vandermonde matrix of degree $g\in \N$ (see \cite{breiding2018learning}) of a set  $\{x_1, \dots, x_n\} \subset \R^d$ is 
$$  \mathbb{V}_g(x_1, \dots, x_n):=\left( \begin{array}{ccc}
     \phi(x_1), &
     \cdots,&
      \phi(x_n) 
\end{array} \right)^\top \in \mathcal{M}_{n \times \nelem}(\R).$$
\end{Definition}

For instance,  the Vandermonde matrix of degree $g = 2$ at $\left\{x_1, ...,x_n \right\} \subset \mathbb{R}^2$, with $x_i = (x_{i,1},x_{i,2})$, is
$$
\begin{pmatrix}
1 \; & \; x_{1,1} \; &\; x_{1,2} \; &\; x_{1,1}x_{1,2} \; & \; x_{1,1}^2 \; & \; x_{1,2}^2\\
\vdots & \vdots & \vdots & \vdots & \vdots& \vdots\\
1 \; & \;x_{n,1} \; & \;x_{n,2} \; & \; x_{n,1}x_{n,2} \; & \;x_{n,1}^2 \; & \;x_{n,2}^2
\end{pmatrix} \in \mathcal{M}_{n \times 6}(\R).
$$

\begin{Remark}[Vandermonde matrix and kernel PCA]
\label{rem: kerPCA}

The construction of the Vandermonde matrix (recall \eqref{eq:DefPhi}) and performing kernel PCA both involve an embedding into a higher dimensional space. In the case of interest for this paper, a simple computation establishes that the kernel
\(
 K(\x, \y) := (1+ \x^\top\y)^g 
\)
is, up to multiplicative combinatorial coefficients, the Euclidean inner product between the rows---$\phi(x)$ and $\phi(y)$---of the Vandermonde matrix of degree $g$. For example, when $d=g=2$, 
\begin{align*}
 (1+ \x^\top\y)^2 &= 1 + 2x_1y_1 + 2x_2 y_2 + 2 x_1x_2 y_1y_2 + x_1^2y_1^2 +  x_2^2y_2^2\\
 & =\big(1, \sqrt{2} x_1,\ \sqrt{2} x_2,\ \sqrt{2}x_1x_2, \ x_1^2, \ x_2^2\big)^\top  \big(1, \sqrt{2} y_1,\ \sqrt{2} y_2,\ \sqrt{2}y_1y_2, \ y_1^2,\ y_2^2\big).
\end{align*}

\end{Remark}

Note that the set $ {\rm I}_g$ (see~\eqref{eq:Ig}) is in bijective correspondence with 
\begin{equation}\label{eq:J_g}
{\rm J}_g :=\Big\{ (a_{\bf i})_{{\bf i}\in \mathcal{B}_{d,g} }:  \sum_{{\bf i}\in \mathcal{B}_{d,g}} a_{\bf i}{\tilde{\theta}_n}^{{\bf i}}=0\ \quad \mathrm{for \; all } \;\; n\in \N\Big\}, 
\end{equation}
which expresses the polynomials in $ {\rm I}_g$ via their coefficients (as vectors in $\R^\nelem$). This equivalent way of representing polynomials will be useful in practice.

\section{Estimation strategy}\label{Sec:Algebra}
\subsection{The failure of the naive approach}
As noted in the Introduction, the kernel of the Vandermonde matrix constructed from the observations $X_i$'s does not yield the coefficients of the polynomials defining the algebraic set $\mathcal{A}$. In this section, we make this precise. We begin by introducing some notation. Proofs of all the results stated in this section are provided in Appendix~\ref{Appendix:ProofSect3}.

\begin{Definition}[Moment matrix of order $g$]
For any distribution $\mu \in \mathcal{P}_{2g}(\R^d)$, we define the moment matrix of $\mu$ of order $g$ as 
\begin{equation}\label{eq:M_g}
 \mathbb{M}_{g}(\mu ):=\E[\phi(Y) \phi(Y)^\top],
\end{equation}
where $Y \sim \mu$ and $\phi \equiv \phi_{d,g}$ is the Veronese map (as defined in~\eqref{eq:DefPhi}).

\end{Definition}

Let $\nu_n := \frac{1}{n} \sum_{i=1}^n \delta_{\theta_i}$ be the empirical distribution of the first $n$ latent points. The moment matrix of $\nu_n$ of order $g$  will play a crucial role in our analysis, and is defined as: 
\begin{equation}
    \label{eq:defM}
     \mathbb{M}_{g}(\nu_n) := \frac{1} {n}\sum_{i=1}^{n}  [\phi(\theta_i)] \phi(\theta_i)^\top \in \mathcal{M}_{\nelem\times \nelem}(\R).
\end{equation}
Note that the kernel of the moment matrix $\mathbb{M}_{g}(\nu_n)$ is the same as that of the Vandermonde matrix $\mathbb{V}_g(\theta_1, \dots, \theta_n)$ (recall Definition~\ref{defn:Vandermonde}). The kernel of  $\mathbb{V}_g(\theta_1, \dots, \theta_n)$ is the set 
\begin{equation}\label{eq:J_g_n}
    {\rm J}_g^{(n)}\equiv {\rm J}_g(\theta_1, \dots, \theta_n) :=\Big\{ (a_{\bf i})_{{\bf i}\in \mathcal{B}_{d,g} }:  \sum_{{\bf i}\in \mathcal{B}_{d,g}} a_{\bf i}{\tilde{\theta}_j}^{{\bf i}}=0\ \quad \forall\,  j=1, \dots, n\Big\},
\end{equation}
(recall~\eqref{eq:Multi-Idx}) which is in bijective correspondence with the set 
\begin{equation}
    \label{eq:Ign}
    {\rm I}_g^{(n)}\equiv {\rm I}_g(\theta_1, \dots, \theta_n):=\{ P\in \R_{\leq g}[x_1, \dots, x_d]:\ P(\theta_j) =0\quad \forall \, j=1, \dots, n\}.
\end{equation}
More precisely, $ (a_{\bf i})_{{\bf i}\in \mathcal{B}_{d,g} } \in {\rm J}_g^{(n)} $ if and only if the polynomial $Q(x)=\sum_{{\bf i}\in \mathcal{B}_{d,g}} a_{\bf i}\tilde{x}^{{\bf i}}$ belongs to  ${\rm I}_g^{(n)}$. This fact is useful for the estimation of $\mathcal{A}$ due to the following lemma, which connects the unknown algebraic set $\mathcal{A}$ with the linear subspace ${\rm I}_g^{(n)}$. 
\begin{Lemma}[Stabilization of ${\rm I}_g^{(n)}$]\label{Lemma:StableIdeal}
    For any $g\in \N$ there exists\footnote{Under mild assumptions and in a setting slightly different from ours, \cite{pauwels2021data} provides a precise description of the integer $n_g$ linking it to the rank of the moment matrix $\mathbb{M}_{g}(\nu_n)$. } $n_g\in \N$ such that for all $n\geq n_g$, ${\rm I}_g^{(n)}={\rm I}_g.$ Moreover, if $g\geq g^*$ (recall Definition~\ref{def:minDegree}) then ${\rm I}_\infty=\langle\, {\rm I}_g\, \rangle$ and  $\mathcal{V}({\rm I}_g)=\mathcal{A}$.  
\end{Lemma}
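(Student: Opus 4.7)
The plan is to split the argument into two largely independent pieces: the stabilization of the sequence ${\rm I}_g^{(n)}$ in $n$, and the identification of $\mathcal{V}({\rm I}_g)$ with $\mathcal{A}$ once $g \geq g^*$.

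For the stabilization, I would exploit the fact that each ${\rm I}_g^{(n)}$ lives inside the finite-dimensional vector space $\R_{\leq g}[x_1,\dots,x_d]$ of dimension $\kappa_{d,g}=\binom{d+g}{d}$. From the defining constraints in~\eqref{eq:Ign}, ${\rm I}_g^{(n)}$ is cut out by the linear evaluation functionals $P \mapsto P(\theta_j)$ for $j=1,\dots,n$; adjoining one more point $\theta_{n+1}$ can only shrink this subspace, so ${\rm I}_g^{(n+1)} \subseteq {\rm I}_g^{(n)}$. Hence $\dim({\rm I}_g^{(n)})$ is a non-increasing sequence of non-negative integers and must eventually be constant. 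Let $n_g$ be the first index at which the dimension stabilizes; nesting combined with equality of dimensions forces ${\rm I}_g^{(n)} = {\rm I}_g^{(n_g)}$ for all $n \geq n_g$. Since ${\rm I}_g = \bigcap_{n \geq 1} {\rm I}_g^{(n)}$ and the tail of this intersection is exactly ${\rm I}_g^{(n_g)}$, the equality ${\rm I}_g^{(n)} = {\rm I}_g$ follows for every $n \geq n_g$.

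For the second part, assume $g \geq g^*$. By Definition~\ref{def:minDegree}, ${\rm I}_\infty = \langle {\rm I}_{g^*} \rangle$, and the trivial inclusion ${\rm I}_{g^*} \subseteq {\rm I}_g$ yields $\langle {\rm I}_{g^*} \rangle \subseteq \langle {\rm I}_g \rangle$. Conversely, every element of ${\rm I}_g$ is a polynomial vanishing on the whole sequence $\{\theta_i\}_{i \in \N}$, hence lies in ${\rm I}_\infty$, so $\langle {\rm I}_g \rangle \subseteq {\rm I}_\infty$. Combining these yields ${\rm I}_\infty = \langle {\rm I}_g \rangle$. The general identity $\mathcal{V}(S) = \mathcal{V}(\langle S \rangle)$, valid for any set $S$ of polynomials, then gives $\mathcal{V}({\rm I}_g) = \mathcal{V}(\langle {\rm I}_g \rangle) = \mathcal{V}({\rm I}_\infty) = \mathcal{A}$, with the last equality taken from~\eqref{eq:AandGenerators}.

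Both parts are conceptually elementary and no serious technical obstacle is anticipated. The substantive content sits in the first step: a dimension-counting argument converts an \emph{a priori} infinite list of vanishing conditions into a finite one, so that the full ideal ${\rm I}_g$ is detected from only the first $n_g$ samples. Once this stabilization is secured, the second part is a routine manipulation of ideals and zero loci.
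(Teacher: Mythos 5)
Your proposal is correct and follows essentially the same route as the paper: the paper also observes that $\{{\rm I}_g^{(n)}\}_n$ is a descending chain of subspaces of the finite-dimensional space $\R_{\leq g}[x_1,\dots,x_d]$ with ${\rm I}_g=\bigcap_n {\rm I}_g^{(n)}$, and concludes stabilization by citing the Artinian (descending chain) property, which your dimension-counting argument simply proves by hand. Your second part likewise matches the paper's (more terse) appeal to Definition~\ref{def:minDegree} and $\mathcal{V}(S)=\mathcal{V}(\langle S\rangle)$, just with the easy inclusions spelled out.
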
  
\begin{Remark}[Connecting $\mathcal{A}$ with ${\rm Ker}
(\mathbb{M}_g(\nu_n))$]\label{rem:Connect-A}
Suppose that $v_1,\ldots, v_k \in \R^\nelem$ form a basis for ${\rm Ker}(\mathbb{M}_g(\nu_n))$. Then, by the above lemma, the polynomials $P_1,\ldots, P_k$ constructed with the $v_i$'s as coefficients (respecting the total order $\le_g$), for $n \ge n_g$ and $g \ge g^*$, satisfy $$\mathcal{A} = \mathcal{V}(\langle P_1,\ldots, P_k \rangle).$$ 
\end{Remark}

The preceding remark suggests recovering 
$\mathcal{A}$ by estimating the kernel of the moment matrix $\mathbb{M}_g(\nu_n)$. Since the latent $\theta_i$'s are unobserved, a seemingly natural alternative is to replace $\nu_n$ with the empirical measure of the noisy observations, $\mu_n :=\frac{1}{n}\sum_{i=1}^n \delta_{X_i}$. 
The next result shows, however, that this naive substitution approach is bound to fail.

\begin{Corollary}\label{Coro:faliureOfDeterministic}
If $\Sigma$ in~\eqref{Model} is positive definite, the following holds with probability one: for all $g\in \N$, there exists  $n_g\in \N$ such that for all $n\geq n_g$, $\langle \, {\rm I}_g(X_1, \dots, X_n)\,  \rangle = \{0\} $ and  $\mathcal{V}( {\rm I}_g(X_1, \dots, X_n))=\R^d$. 
\end{Corollary}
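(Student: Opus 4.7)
The plan is to deduce the corollary from a single simple geometric fact: almost surely, the Vandermonde matrix based on $\nelem$ noisy observations is invertible, so its kernel is trivial. Because enlarging $n$ appends rows and can only shrink a kernel, triviality persists for every $n\geq \nelem$, which exactly says that no nonzero polynomial of degree $\leq g$ vanishes at every $X_i$. A countable union over $g\in\N$ then yields the simultaneous almost-sure statement.

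More concretely, fix $g\in\N$ and define
$$D_g(x_1,\ldots,x_\nelem) := \det\bigl(\mathbb{V}_g(x_1,\ldots,x_\nelem)\bigr),$$
which is a polynomial in the $d\,\nelem$ coordinates of $(x_1,\ldots,x_\nelem)$. The crucial step is to verify that $D_g$ is not the zero polynomial, equivalently, that there exist $\nelem$ points in $\R^d$ whose Veronese images $\phi(x_1),\ldots,\phi(x_\nelem)$ form a basis of $\R^{\nelem}$. This is a classical fact from multivariate polynomial interpolation: since the monomials $\{\tilde{x}^{\bf i}\}_{{\bf i}\in\mathcal{B}_{d,g}}$ are linearly independent as functions on $\R^d$, one can inductively select $\nelem$ points that separate them (e.g., take distinct points on a generic line and invoke the univariate Vandermonde determinant formula). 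I regard this as the main, though mild, obstacle; once it is established, the zero locus $\{D_g=0\}$ is a proper algebraic subset of $\R^{d\nelem}$ and hence has Lebesgue measure zero.

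The probabilistic half is then immediate. Because $\Sigma$ is positive definite, each $X_i=\theta_i+\eps_i$ has a Gaussian density on $\R^d$, and by independence the joint vector $(X_1,\ldots,X_\nelem)$ has a density on $\R^{d\nelem}$. Therefore
$$\P\bigl(D_g(X_1,\ldots,X_\nelem)=0\bigr)=0,$$
so almost surely $\mathbb{V}_g(X_1,\ldots,X_\nelem)$ is invertible; appending the rows $\phi(X_i)^\top$ for $i>\nelem$ preserves trivial kernel. Via the bijection $\eqref{eq:J_g_n}$–$\eqref{eq:Ign}$ between kernel elements and coefficient vectors of polynomials, this gives ${\rm I}_g(X_1,\ldots,X_n)=\{0\}$ for every $n\geq n_g:=\nelem$, whence $\langle {\rm I}_g(X_1,\ldots,X_n)\rangle=\{0\}$ and $\mathcal{V}({\rm I}_g(X_1,\ldots,X_n))=\R^d$.

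Finally, since $\N$ is countable, the union of the null events indexed by $g\in\N$ is still null. On its complement the above conclusion holds for every $g$ simultaneously, establishing the corollary. Note that Lemma~\ref{Lemma:StableIdeal} is not needed for this argument; the rank argument directly delivers a uniform-in-$n$ conclusion once $n\geq \nelem$.
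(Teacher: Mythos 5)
Your proof is correct, but it follows a different route from the paper's. The paper's own proof is a two-line reduction: it applies Lemma~\ref{Lemma:StableIdeal} to the sequence $X_1,\dots,X_n$ (so that ${\rm I}_g(X_1,\dots,X_n)$ stabilizes to the degree-$g$ vanishing space of the whole sequence $\{X_i\}_{i\ge 1}$) and then asserts that, because of the non-degenerate Gaussian noise, the Zariski closure of $\{X_i\}_{i\ge 1}$ is $\R^d$ almost surely, which forces the stabilized space to be $\{0\}$. You instead argue directly: the multivariate Vandermonde determinant $D_g$ at $\nelem$ points is a nonzero polynomial, the joint law of $(X_1,\dots,X_{\nelem})$ has a density on $\R^{d\nelem}$ since $\Sigma\succ 0$, hence $D_g(X_1,\dots,X_{\nelem})\neq 0$ a.s., and appending rows only shrinks the kernel; a countable union over $g$ finishes. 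This buys two things the paper's argument does not make explicit: it supplies the justification of the ``Zariski closure is $\R^d$'' claim (which the paper leaves as an assertion), and it yields a deterministic, explicit threshold $n_g=\nelem$ rather than the unspecified $n_g$ coming from the Artinian descending-chain argument. One small correction: your parenthetical suggestion to certify $D_g\not\equiv 0$ by taking distinct points on a generic line does not work for $d\ge 2$, since the Veronese images of collinear points are always linearly dependent once you need $\nelem$ of them (any degree-$\le g$ multiple of the line's defining linear form lies in the kernel). The preceding inductive point-selection argument, resting on the linear independence of the monomials $\{\tilde{x}^{\bf i}\}_{{\bf i}\in\mathcal{B}_{d,g}}$ as functions on $\R^d$, is the right justification and is all you need.
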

\begin{proof}
    Apply Lemma~\ref{Lemma:StableIdeal} to the dataset $X_1, \dots, X_n$ and note that, due to the non-degenerate Gaussian noise, the Zariski closure of $\{X_n\}_{n\ge 1} $ is $\R^d$ with probability one.  
\end{proof}

\begin{Remark}[PCA: isotropic noise and \(g=1\)]
\label{rem:NoisyPCA}
The preceding result implies that the limiting moment matrix \(\mathbb{M}_g(\mu_n)\) has a trivial kernel for any \(g \in \mathbb{N}\). In the special case of PCA—corresponding to linear polynomials, i.e., \(g=1\)—this remains true.  When the noise is isotropic, i.e., \(\Sigma = \sigma^{2} I_{d}\), and \(g=1\), the eigenvalues of \(\E[\mathbb{M}_g(\mu_n)]\) and \(\mathbb{M}_g(\nu_n)\) differ only by a constant shift. Consequently, the true algebraic set \(\mathcal{A}\) (an affine subspace in this setting) can be recovered from the eigenvectors associated with the smallest eigenvalues of \(\mathbb{M}_g(\mu_n)\), yielding a consistent estimator. The isotropy of the noise is crucial: if the noise variances differ across directions, the empirical principal directions can diverge substantially from the population ones, even for large samples.  

For \(g>1\), however, the expression of \(\E[\mathbb{M}_g(\mu_n)] - \mathbb{M}_g(\nu_n)\) becomes more intricate (see Remark~\ref{ex: Circle} for an illustration), and the kernels of the two matrices no longer bear such a straightforward relationship.
\end{Remark}

Corollary~\ref{Coro:faliureOfDeterministic}  illustrates that the kernel of $\mathbb{M}_g(\mu_n)$ does not directly provide the sought information about the algebraic set $\mathcal{A}$. In a sense, one needs to `correct' the embedded data in a non-linear fashion. This is the object of the next subsection. 

\subsection{Debiasing the moment matrix $\mathbb{M}_g(\mu_n)$}

Owing to the noise in the observations $X_i$'s and the nonlinearity in the Veronese map (recall Definition~\ref{defn:Veronese}), 
the moment matrix computed on the data (i.e., $\mathbb{M}_g(\mu_n)$) will yield an inconsistent estimator of $\mathbb{M}_g(\nu_n)$, the moment matrix based on the $\theta_i$'s. The goal of this section is to find a closed-form expression for the incurred bias. Moreover, we want this expression to be computable, assuming knowledge of the noise covariance $\Sigma$. It will be convenient to work in tensor spaces, as this will allow us to express formulas more succinctly.
Let $[[g]]$ be the set of permutations of $\{1, \dots, g\}$.
The {\it symmetrization map} 
$$ \sym:(\R^{d+1})^{\otimes_g}\to (\R^{d+1})^{\otimes_g} $$
is the  unique linear  map such that
$$ \sym\left( \otimes_g(\x^{(1)}, \dots, \x^{(g)}) \right) := \frac{1}{g! }\sum_{\tau\in [[g]]}\otimes_g(\x^{(\tau(1))}, \dots, \x^{(\tau(g))}) , $$
for all $(\x^{(1)}, \dots, \x^{(g)})\in (\R^{d+1})^{g}$. 
The range ${\rm sym}((\R^{d+1})^{\otimes_g})$ is called the space of symmetric tensors. The symmetrization map is simply the operation that takes an ordinary tensor in $(\R^{d+1})^{\otimes_g}$ and ``averages'' it over all possible reorderings of its factors, thereby producing a tensor that is symmetric. The symmetrization map in the case of matrices, i.e., for $g=2$, has the particular form 
\(
\sym: A \mapsto (A+A^\top)/{2},  
\)
for any square matrix $A$.

It is well-known that the space of polynomials over $\R^d$ with degree $g$ (identified with $\R^\nelem$) and ${\rm sym}((\R^{d+1})^{\otimes_g})$ are isomorphic (cf.\ \cite{ComonetAl.SIMAA.2008}). For the sake of completeness, we formally prove this in Appendix~\ref{Appendix:ProofSect3}, where we provide an explicit construction of the isomorphism.
\begin{Lemma}
\label{lem: isomorp}
There exists a linear bijective map $\gamma_g: {\rm sym}((\R^{d+1})^{\otimes_g})  \longrightarrow  \R^\nelem$ such that 
\begin{equation}
    \label{gammag}
    \phi(\x)=\gamma_g \,\tilde{\x}^{\otimes_g} ,\quad \text{for all }\x\in \R^d.
\end{equation}

\end{Lemma}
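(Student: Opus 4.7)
The plan is to construct $\gamma_g$ explicitly by specifying it on a natural basis of $\sym((\R^{d+1})^{\otimes_g})$ and then to verify the identity $\phi(\x)=\gamma_g\,\tilde{\x}^{\otimes_g}$ via a multinomial expansion of $\tilde{\x}^{\otimes_g}$.

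First, let $e_0,e_1,\dots,e_d$ denote the canonical basis of $\R^{d+1}$, and for each multi-index $\mathbf{i}=(i_0,\dots,i_d)\in\mathcal{B}_{d,g}$ define the symmetric tensor
\[
T_{\mathbf{i}} \,:=\, \sym\bigl(e_0^{\otimes i_0}\otimes e_1^{\otimes i_1}\otimes\cdots\otimes e_d^{\otimes i_d}\bigr).
\]
I would first show that $\{T_{\mathbf{i}}\}_{\mathbf{i}\in\mathcal{B}_{d,g}}$ is a basis of $\sym((\R^{d+1})^{\otimes_g})$: any symmetric tensor is determined by its coefficients on elementary tensors $e_{j_1}\otimes\cdots\otimes e_{j_g}$ that are constant on orbits of the symmetric group acting on positions, and these orbits are precisely indexed by $\mathcal{B}_{d,g}$. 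In particular, the dimension of $\sym((\R^{d+1})^{\otimes_g})$ equals $\kappa_{d,g}$, matching $\dim\R^{\kappa_{d,g}}$.

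Second, I would define $\gamma_g$ as the unique linear map satisfying
\[
\gamma_g(T_{\mathbf{i}}) \,:=\, \frac{i_0!\,i_1!\cdots i_d!}{g!}\, e_{\mathbf{i}},
\]
where $\{e_{\mathbf{i}}\}_{\mathbf{i}\in\mathcal{B}_{d,g}}$ is the standard basis of $\R^{\kappa_{d,g}}$ ordered according to $\leq_g$. Since $\gamma_g$ sends a basis to a positive multiple of a basis, it is immediately linear and bijective.

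Third, to verify the key identity, I expand $\tilde{\x}=\sum_{k=0}^{d}\tilde{x}_k e_k$ (with $\tilde{x}_0=1$) multilinearly:
\[
\tilde{\x}^{\otimes_g} \,=\, \sum_{(j_1,\dots,j_g)\in\{0,\dots,d\}^g} \tilde{x}_{j_1}\cdots\tilde{x}_{j_g}\; e_{j_1}\otimes\cdots\otimes e_{j_g}.
\]
Grouping tuples $(j_1,\dots,j_g)$ by their signature $\mathbf{i}\in\mathcal{B}_{d,g}$ (where $i_k$ counts occurrences of $k$), each such tuple contributes the monomial $\tilde{\x}^{\mathbf{i}}$, and there are exactly $g!/(i_0!\cdots i_d!)$ distinct tuples of signature $\mathbf{i}$, whose sum of elementary tensors equals $\tfrac{g!}{i_0!\cdots i_d!}T_{\mathbf{i}}$. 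This yields
\[
\tilde{\x}^{\otimes_g} \,=\, \sum_{\mathbf{i}\in\mathcal{B}_{d,g}} \tilde{\x}^{\mathbf{i}}\cdot \frac{g!}{i_0!\cdots i_d!}\, T_{\mathbf{i}}.
\]
Applying $\gamma_g$ collapses the combinatorial factors and gives $\gamma_g(\tilde{\x}^{\otimes_g})=\sum_{\mathbf{i}}\tilde{\x}^{\mathbf{i}} e_{\mathbf{i}}=\phi(\x)$, as required.

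The main obstacle here is not a deep difficulty but purely bookkeeping: one must track the multinomial coefficient $g!/(i_0!\cdots i_d!)$ that arises from symmetrization and choose the normalization of $\gamma_g$ on the basis so that it cancels exactly. A secondary check is that the definition is independent of the particular monomial order $\leq_g$ chosen (the order only labels the components of $\R^{\kappa_{d,g}}$), and that the basis-to-basis property indeed forces $\gamma_g$ to be a linear isomorphism rather than merely a bijection of sets.
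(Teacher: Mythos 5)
Your proposal is correct and follows essentially the same route as the paper: the paper's Lemma in the appendix likewise builds a basis of ${\rm sym}((\R^{d+1})^{\otimes_g})$ indexed by the permutation-orbits of tuples in $\{0,\dots,d\}^g$ (equivalently by $\mathcal{B}_{d,g}$) and defines $\gamma_g$ basis-to-basis, concluding bijectivity from the dimension count $\kappa_{d,g}=\binom{d+g}{d}$. If anything, your explicit tracking of the multinomial factor $g!/(i_0!\cdots i_d!)$ and the direct verification of \eqref{gammag} via the expansion of $\tilde{\x}^{\otimes_g}$ is more careful than the paper's treatment, which asserts that identity with the normalization left implicit.
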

To better understand the various notation used in the above result, let us illustrate the concepts for the case $g=d=2$.  In this case, for $ x=(x_1, x_2) \in \R^2$,  $ \tilde{x} =(1,x_1, x_2) \in \R^3$, and $$\tilde{\x}^{\otimes_2} = \begin{pmatrix}
1 & x_{1} & x_{2} \\
x_{1} & x_{1}^2 & x_{1} x_{2} \\
x_{2} & x_{2} x_{1} &  x_{2}^2 \\
\end{pmatrix} \qquad \mathrm{and}\qquad \gamma_2 \,\tilde{\x}^{\otimes_2} = (1,x_{1}, x_{2}, x_1 x_2, x_1^2, x_2^2)^\top.$$ In words, $\gamma_g \,\tilde{\x}^{\otimes_g}$ takes the $g$-tensor $\tilde{\x}^{\otimes_g}$ and spits out the ``upper triangular" part of the tensor in the form of a vector in $\R^\nelem$ with entries ordered by the total order $\le_2$. In our example above, the total order $\le_2$ is $$ (2,0,0) \leq_2 (1,1,0) \leq_2  (1,0,1)\leq_2 (0,1,1)\leq_2  (0,2,0) \leq_2 (0,0,2).$$

As the spaces $\mathcal{M}_{k\times k}(\R)$ and   $\R^k\otimes \R^k$ are isomorphic (see e.g.,~\cite[p.~10]{Ryan.2002}) there exists a linear isomorphism
\begin{equation}
    \label{ExampleTranspose}
    h_k:\R^k\otimes \R^k\to \mathcal{M}_{k\times k}(\R) \qquad \mathrm{ such \; that }  \qquad h_k({ u}\otimes { v}) = { u}\, { v}^\top   \;\; \mathrm{ for \;all }\;\; { u}, { v}\in \R^k.
\end{equation}
  Using this fact and Lemma~\ref{lem: isomorp}, we derive the following result which gives an alternate expression for $\mathbb{M}_g(\mu)$ (cf.~\eqref{eq:M_g}), for $\mu \in \mathcal{P}_{2g}(\R^d)$, based on the notation introduced above.

\begin{Lemma} 
\label{lem: RepSq}
Set $g\in \N$ and  $ \mu\in \mathcal{P}_{2g}(\R^d) $. Recall that for $x \in \R^d$, $\tilde x := (1,x) \in \R^{d+1}$. Then, 
 \(
 \mathbb{M}_g(\mu) = h_{\nelem}\,(\gamma_g\otimes \gamma_g)\, \left(\E_{X\sim \mu}[\tilde{X}^{\otimes_{2g}}] \right).
 \)
\end{Lemma}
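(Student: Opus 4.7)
The plan is to start from the definition $\mathbb{M}_g(\mu)=\mathbb{E}_{Y\sim\mu}[\phi(Y)\phi(Y)^\top]$ and rewrite the integrand as the image, under a linear map, of $\tilde Y^{\otimes_{2g}}$; linearity of expectation then finishes the proof. Concretely, by Lemma~\ref{lem: isomorp} we have $\phi(Y)=\gamma_g\,\tilde Y^{\otimes_g}$ pointwise, so that
\[
\phi(Y)\phi(Y)^{\top} \;=\; \bigl(\gamma_g\,\tilde Y^{\otimes_g}\bigr)\bigl(\gamma_g\,\tilde Y^{\otimes_g}\bigr)^{\top}.
\]
The first step is then to replace the outer product on the right-hand side with a tensor product via the isomorphism $h_{\nelem}$ defined in \eqref{ExampleTranspose}: since $h_{\nelem}(u\otimes v)=uv^{\top}$ for all $u,v\in\R^{\nelem}$, we obtain
\[
\phi(Y)\phi(Y)^{\top} \;=\; h_{\nelem}\!\bigl((\gamma_g\,\tilde Y^{\otimes_g})\otimes(\gamma_g\,\tilde Y^{\otimes_g})\bigr).
\]

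The second, key step is to peel off the two copies of $\gamma_g$. Using the standard functoriality of the tensor product of linear maps---namely the identity $(Au)\otimes(Bv)=(A\otimes B)(u\otimes v)$ applied with $A=B=\gamma_g$ and $u=v=\tilde Y^{\otimes_g}$---we get
\[
(\gamma_g\,\tilde Y^{\otimes_g})\otimes(\gamma_g\,\tilde Y^{\otimes_g}) \;=\; (\gamma_g\otimes\gamma_g)\bigl(\tilde Y^{\otimes_g}\otimes\tilde Y^{\otimes_g}\bigr).
\]
Next, I invoke the canonical associativity isomorphism between $(\R^{d+1})^{\otimes_g}\otimes(\R^{d+1})^{\otimes_g}$ and $(\R^{d+1})^{\otimes_{2g}}$, under which $\tilde Y^{\otimes_g}\otimes\tilde Y^{\otimes_g}$ is identified with $\tilde Y^{\otimes_{2g}}$. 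Combining the last two displays gives the pointwise identity
\[
\phi(Y)\phi(Y)^{\top}\;=\; h_{\nelem}\,(\gamma_g\otimes\gamma_g)\bigl(\tilde Y^{\otimes_{2g}}\bigr).
\]

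The final step is to take expectations on both sides. Because $h_{\nelem}$ and $\gamma_g\otimes\gamma_g$ are linear maps between finite-dimensional real vector spaces and $\mu\in\mathcal{P}_{2g}(\R^d)$ ensures integrability of every coordinate of $\tilde Y^{\otimes_{2g}}$, expectation commutes with both maps, yielding the claimed formula. The main conceptual obstacle, rather than any computation, is justifying the two identifications used above: the tensor-product-of-linear-maps rule $(A\otimes B)(u\otimes v)=(Au)\otimes(Bv)$, and the implicit associativity isomorphism $(V^{\otimes_g})\otimes(V^{\otimes_g})\cong V^{\otimes_{2g}}$ that allows us to replace $\tilde Y^{\otimes_g}\otimes\tilde Y^{\otimes_g}$ by $\tilde Y^{\otimes_{2g}}$. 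Both are standard multilinear-algebra facts \cite{Ryan.2002}, and invoking them keeps the proof essentially formal once Lemma~\ref{lem: isomorp} is in hand.
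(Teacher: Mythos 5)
Your proof is correct, and its core is identical to the paper's: the pointwise identity $\phi(Y)\phi(Y)^{\top}=h_{\nelem}(\gamma_g\otimes\gamma_g)(\tilde Y^{\otimes_{2g}})$ obtained from Lemma~\ref{lem: isomorp}, the defining property $h_{\nelem}(u\otimes v)=uv^{\top}$, the functoriality rule $(A\otimes B)(u\otimes v)=(Au)\otimes(Bv)$, and the identification of $\tilde Y^{\otimes_g}\otimes\tilde Y^{\otimes_g}$ with $\tilde Y^{\otimes_{2g}}$. The only place you diverge is the final passage from the pointwise identity to general $\mu$: you simply commute the expectation with the fixed linear maps $h_{\nelem}$ and $\gamma_g\otimes\gamma_g$, using that $\mu\in\mathcal{P}_{2g}(\R^d)$ makes every coordinate of $\tilde Y^{\otimes_{2g}}$ integrable, whereas the paper first checks the identity for empirical (counting) measures and then passes to a general $\mu$ by approximating it with empirical measures converging in the $2g$-Wasserstein distance. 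Both closures of the argument are valid; yours is the more elementary and direct one, since linearity plus integrability is all that is needed and no approximation machinery is required, while the paper's limit argument implicitly relies on continuity of the moment map under $W_{2g}$-convergence.
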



In the next theorem we compute the expectation of the moment matrix for $\mu_n = \frac{1}{n} \sum_{i=1}^n \delta_{X_i}$ where $X_i$'s follow model~\eqref{Model}. In particular, the result highlights the impact of the (unobserved) noise $\{\eps_i\}_{i=1}^n$, and enables us to propose an unbiased estimator of $\mathbb{M}_g(\nu_n)$.

\begin{Theorem}\label{theorem:representation} Let $\{X_i, \eps_i, \theta_i \}_{i=1}^n$ be as in \eqref{Model},  $\mu_n=\frac{1}{n}\sum_{i=1}^n \delta_{X_i}$ and $\nu_n=\frac{1}{n}\sum_{i=1}^n \delta_{\theta_i}$. Let $\tilde{\theta}_i = (1,\theta_{i,1},\ldots, \theta_{i,d}) \in \R^{d+1}$ and 
$ \tilde{\Sigma}=\begin{pmatrix}
 1& 0\\
 0& \Sigma
\end{pmatrix}\in \mathcal{M}_{(d+1)\times (d+1)}(\R)$.
 Then,
\[\E\left[\mathbb{M}_g(\mu_n)\right] 
    \\
    = \frac{1}{n}\sum_{i=1}^n\sum_{k=0}^{ g} C_{2g,k} h_{\nelem}(\gamma_g\otimes \gamma_g)\left(  \sym \left( \tilde{\theta}_i^{\otimes_{(2g-2k)}} \otimes \tilde{\Sigma}^{\otimes_k} \right) \right),\]
where \(
C_{g,k} := \binom{g}{2k} \frac{(2k)!}{k!2^k}.
\)  Moreover,  
\begin{equation}
\label{eq: MomDec}
     \hat{\mathbb{M}}_g :=\frac{1}{n}\sum_{i=1}^n M_{n,i}, 
     \end{equation}
where
\begin{equation}
\label{eq: MomDec-2}
M_{n,i} :=\sum_{k=0}^{g} C_{2g,k}(-1)^k h_{\nelem}(\gamma_g\otimes \gamma_g)\left(  \sym \left( \tilde{X}_i^{\otimes_{(2g-2k)}} \otimes \tilde{\Sigma}^{\otimes_k} \right) \right),
\end{equation}
is an unbiased estimator of $\mathbb{M}_g(\nu_n)$, i.e.,  
  $ \E[ \hat{\mathbb{M}}_g ]=\mathbb{M}_g(\nu_n). $
\end{Theorem}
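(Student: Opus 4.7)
The plan is to reduce the statement to a purely tensor-algebraic moment calculation via Lemma~\ref{lem: RepSq}, and then exploit Gaussian moment formulas (Isserlis/Wick) together with a combinatorial cancellation identity to obtain the second claim. By Lemma~\ref{lem: RepSq},
\[
\mathbb{M}_g(\mu_n) \;=\; h_{\nelem}(\gamma_g \otimes \gamma_g)\Big(\tfrac{1}{n}\sum_{i=1}^n \tilde X_i^{\otimes_{2g}}\Big),
\]
so because $h_\nelem$ and $\gamma_g$ are linear and deterministic, it suffices to compute $\E[\tilde X_i^{\otimes_{2g}}]$ (and, for the second assertion, $\E[\tilde X_i^{\otimes_{2g-2k}}]$) for each $i$ and each $k$. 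Writing $\tilde X_i = \tilde\theta_i + \tilde\eps_i$ with $\tilde\eps_i := (0,\eps_i) \sim \mathcal{N}(0,\tilde\Sigma)$, I would expand $(\tilde\theta_i + \tilde\eps_i)^{\otimes_{2g}}$ multilinearly as a sum indexed by subsets $S \subseteq \{1,\ldots,2g\}$ recording which tensor slots carry $\tilde\eps_i$. Taking expectation and using independence of $\tilde\eps_i$ kills all terms with $|S|$ odd, while after applying the symmetrization map (which is automatically inserted by $\gamma_g$ since its range consists of symmetric tensors), the sum over subsets of size $2k$ collapses into $\binom{2g}{2k}\sym\bigl(\tilde\theta_i^{\otimes_{2g-2k}}\otimes \E[\tilde\eps_i^{\otimes_{2k}}]\bigr)$.

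The Isserlis/Wick formula then supplies $\E[\tilde\eps_i^{\otimes_{2k}}] = \frac{(2k)!}{k! 2^k}\sym(\tilde\Sigma^{\otimes_k})$, obtained by summing over the $(2k-1)!! = (2k)!/(k!2^k)$ perfect pairings of $\{1,\ldots,2k\}$ and recognizing the sum-over-pairings as a constant multiple of $\sym(\tilde\Sigma^{\otimes_k})$. Using idempotency of $\sym$ and the identity $\sym(A\otimes \sym(B)) = \sym(A\otimes B)$, this yields
\[
\E[\tilde X_i^{\otimes_{2g}}] \;=\; \sum_{k=0}^{g} C_{2g,k}\,\sym\bigl(\tilde\theta_i^{\otimes_{2g-2k}}\otimes \tilde\Sigma^{\otimes_k}\bigr),
\]
with $C_{2g,k}=\binom{2g}{2k}(2k)!/(k!\,2^k)$. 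Applying $h_{\nelem}(\gamma_g\otimes\gamma_g)$ and averaging over $i$ completes the first assertion.

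For the unbiasedness of $\hat{\mathbb{M}}_g$, I would substitute the same expansion (with $2g$ replaced by $2g-2k$) into $\E[\tilde X_i^{\otimes_{2g-2k}}]$ inside the definition of $M_{n,i}$, and regroup the resulting double sum according to the total number $m = k+j$ of $\tilde\Sigma$-tensors appearing. The coefficient in front of $\sym\bigl(\tilde\theta_i^{\otimes_{2g-2m}}\otimes \tilde\Sigma^{\otimes_m}\bigr)$ then becomes
\[
\sum_{k=0}^{m}(-1)^k\, C_{2g,k}\,C_{2g-2k,\,m-k}.
\]
A direct factorial manipulation gives the identity $C_{2g,k}\,C_{2g-2k,m-k} = C_{2g,m}\binom{m}{k}$, so the inner sum reduces to $C_{2g,m}\sum_{k=0}^m(-1)^k\binom{m}{k}$, which vanishes for $m\ge 1$ and equals $1$ for $m=0$ by the binomial theorem. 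Only the $m=0$ term survives, producing $\E[\hat{\mathbb{M}}_g] = h_{\nelem}(\gamma_g\otimes\gamma_g)\bigl(\tfrac{1}{n}\sum_i \tilde\theta_i^{\otimes_{2g}}\bigr)=\mathbb{M}_g(\nu_n)$, as claimed.

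The main obstacle is the bookkeeping in the three interacting layers (the multilinear expansion of $(\tilde\theta_i+\tilde\eps_i)^{\otimes_{2g}}$, the Gaussian pairings that produce $\sym(\tilde\Sigma^{\otimes_k})$, and the compatibility of $\sym$ with the tensor product). The conceptual key is recognizing that the apparently nontrivial combinatorial sum in the debiasing step collapses via the factorial identity $C_{2g,k}C_{2g-2k,m-k}=C_{2g,m}\binom{m}{k}$; once this is spotted, the cancellation is an immediate consequence of $\sum_{k=0}^m(-1)^k\binom{m}{k}=0$, and the choice of alternating signs and coefficients $C_{2g,k}$ in \eqref{eq: MomDec-2} is forced.
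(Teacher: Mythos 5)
Your proposal is correct in substance, but it takes a genuinely different route from the paper. The paper's own proof is essentially a citation: the first display follows by applying Theorem~4.1 of \cite{pereira2022tensor} (the Gaussian tensor-moment formula) to $\tilde X_i$ and transporting it through Lemma~\ref{lem: RepSq}, and the unbiasedness of $\hat{\mathbb{M}}_g$ is quoted from Theorem~5.1 of the same reference together with linearity of $h_{\nelem}(\gamma_g\otimes\gamma_g)$. You instead re-derive both ingredients from scratch: the multilinear expansion of $(\tilde\theta_i+\tilde\eps_i)^{\otimes_{2g}}$ plus Isserlis/Wick gives the moment formula with $C_{2g,k}=\binom{2g}{2k}\frac{(2k)!}{k!2^k}$, and unbiasedness follows by regrouping the double sum by the total number $m$ of covariance factors and using $C_{2g,k}\,C_{2g-2k,m-k}=C_{2g,m}\binom{m}{k}$ together with $\sum_{k=0}^m(-1)^k\binom{m}{k}=\mathds{1}\{m=0\}$. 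I checked the factorial identity and the auxiliary facts you invoke ($\E[\tilde\eps^{\otimes_{2k}}]=\frac{(2k)!}{k!2^k}\sym(C^{\otimes_k})$ for a centered Gaussian with covariance $C$, idempotence of $\sym$, and $\sym(A\otimes\sym(B))=\sym(A\otimes B)$); they are all correct, so your cancellation argument is sound and yields a self-contained proof where the paper leans on an external reference. One small justification should be repaired: $\gamma_g\otimes\gamma_g$ only symmetrizes within each block of $g$ slots, so it does not by itself ``insert'' the full symmetrization; the clean argument is that $\tilde X_i^{\otimes_{2g}}$ is already a fully symmetric tensor, hence you may replace $\E[\tilde X_i^{\otimes_{2g}}]$ by its symmetrization before expanding.

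There is, however, one concrete slip: $\tilde\eps_i=(0,\eps_i)$ is \emph{not} distributed as $\mathcal{N}(0,\tilde\Sigma)$ with $\tilde\Sigma=\operatorname{diag}(1,\Sigma)$, since its first coordinate is deterministically zero; its covariance is $\operatorname{diag}(0,\Sigma)$. Your Wick step and the subsequent cancellation are insensitive to which matrix is used, \emph{provided the same matrix appears both in the moment expansion and in the definition of $M_{n,i}$ in \eqref{eq: MomDec-2}}; with $\operatorname{diag}(0,\Sigma)$ your argument goes through verbatim, whereas with the corner entry equal to $1$ the displayed identities already fail for $d=g=1$ (the $(1,1)$ entry of $\E[\mathbb{M}_1(\mu_n)]$ is $1$, not $2$, and $\E[M_{n,i}]$ would have $(1,1)$ entry $0$ instead of $1$). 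This discrepancy traces back to the statement's definition of $\tilde\Sigma$ rather than to your strategy---the paper's own computation in Remark~\ref{ex: Circle} is consistent with the corner entry being $0$---but as written your distributional claim is false and should be replaced by the correct covariance of $\tilde\eps_i$ (with the corresponding matrix used in \eqref{eq: MomDec-2}).
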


The symmetrization map is required to exploit the bijectivity between \emph{symmetric} tensors and homogeneous polynomials, on which our proof ultimately relies, see \cite[Prop.~2.8]{pereira2022tensor}.

\begin{Corollary}
[Explicit formula when \(d = 2\)]
\label{prop: EasyFormula}
When \(d = 2\), each entry of the moment matrix \(\mathbb{M}_g(\nu_n)\) takes the form
$\frac{1}{n} \sum_{i=1}^n \theta_{i,1}^K \theta_{i,2}^L,$
for \(K, L \in \mathbb{N}\). An unbiased estimator of this quantity, based on the observed data \(\{X_i\}_{i=1}^n\), is given by
\[
\frac{1}{n} \sum_{i=1}^n \sum_{j=0}^{\lfloor K/2 \rfloor} \sum_{\lambda=0}^{\lfloor L/2 \rfloor} 
C_{K,j} C_{L,\lambda} (-1)^{j+\lambda} \sigma^{2j+2\lambda} 
X_{i,1}^{K-2j} X_{i,2}^{L-2\lambda},
\]
where \(C_{K,j}\) and \(C_{L,\lambda}\) are suitable combinatorial coefficients. 
\end{Corollary}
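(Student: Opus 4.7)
The plan is to reduce the corollary to a scalar unbiased-estimation problem by exploiting linearity of expectation and the independence of the noise coordinates under $\Sigma=\sigma^{2}I_2$, and then to solve that scalar problem by direct Gaussian-moment computation. By the definition of the Veronese map in~\eqref{eq:DefPhi}, every entry of $\mathbb{M}_g(\nu_n)$ has the form $\frac{1}{n}\sum_{i=1}^n\theta_{i,1}^K\theta_{i,2}^L$ for some $K,L\in\N_0$ with $K+L\leq 2g$, so by linearity it suffices to produce, for each such $(K,L)$, a per-observation unbiased estimator of $\theta_{i,1}^K\theta_{i,2}^L$ from $X_i=\theta_i+\eps_i$; averaging then yields the claim.

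Next, the assumption $\Sigma=\sigma^{2}I_2$ makes the noise coordinates $\eps_{i,1},\eps_{i,2}$ independent $\mathcal{N}(0,\sigma^{2})$ variables, hence $X_{i,1}$ and $X_{i,2}$ are conditionally independent given $\theta_i$. Since the target factors as $\theta_{i,1}^K\cdot\theta_{i,2}^L$, the problem decouples, and it is enough to exhibit a scalar unbiased estimator $\hat{g}_K(X)$ of $\theta^K$ when $X=\theta+\sigma Z$ with $Z\sim\mathcal{N}(0,1)$; the product $\hat{g}_K(X_{i,1})\hat{g}_L(X_{i,2})$ will then match, term by term, the double sum in the statement, with the factor $(-1)^{j+\lambda}\sigma^{2j+2\lambda}$ arising from the product of univariate signs and powers.

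For the scalar step, the binomial theorem together with the standard Gaussian moment identity $\E[(\sigma Z)^{2k}]=\sigma^{2k}\frac{(2k)!}{k!\,2^k}$ (odd moments vanish) yields
\[
\E[X^K]=\sum_{k=0}^{\lfloor K/2\rfloor}C_{K,k}\,\sigma^{2k}\theta^{K-2k},\qquad C_{K,k}=\binom{K}{2k}\frac{(2k)!}{k!\,2^k},
\]
which is precisely the combinatorial coefficient appearing in Theorem~\ref{theorem:representation}. Defining $\hat{g}_K(X):=\sum_{j=0}^{\lfloor K/2\rfloor}(-1)^j C_{K,j}\sigma^{2j}X^{K-2j}$ and substituting the above moment formula, I would group terms by the total even power $2m$ of $\sigma$ (setting $m=j+k$) and use the factorial identity
\[
C_{K,j}\,C_{K-2j,m-j}=\frac{K!}{2^m(K-2m)!\,m!}\binom{m}{j}
\]
to reduce the coefficient of $\theta^{K-2m}$ in $\E[\hat{g}_K(X)]$ to $\frac{K!}{2^m(K-2m)!\,m!}\sum_{j=0}^{m}(-1)^j\binom{m}{j}=\delta_{m,0}$, which gives $\E[\hat{g}_K(X)]=\theta^K$ as needed.

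The main obstacle is just this final combinatorial cleanup; after the factorial simplification the alternating binomial sum vanishes by $(1-1)^m=0$ for $m\geq 1$, so it is more bookkeeping than a genuine difficulty. As a sanity check, $\hat{g}_K(X)$ coincides with the rescaled probabilists' Hermite polynomial $\sigma^K H_K(X/\sigma)$, for which the identity $\E[H_K(\mu+Z)]=\mu^K$ is classical; alternatively, the same formula can be read off as a coordinate specialization of Theorem~\ref{theorem:representation} once $\tilde{\Sigma}$ is diagonalized and the symmetric-tensor contractions are unpacked entry-by-entry.
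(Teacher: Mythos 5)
Your proof is correct and follows essentially the same route as the paper's: factor the expectation coordinate-wise using the diagonal (isotropic) noise, then invert the univariate Gaussian moment formula $\E[X^K]=\sum_{j}C_{K,j}\sigma^{2j}\theta^{K-2j}$. The only difference is that you carry out the inversion explicitly via the alternating-binomial (Hermite-polynomial) identity, whereas the paper delegates that verification to the proof of Theorem~5.1 in \cite{pereira2022tensor}.
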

While the above expression is relatively simple, the general case (\(d > 2\)) is more intricate. It relies on the connection between tensor moments and the entries of the moment matrix---a relationship that plays a central role in the theoretical analysis and proofs of our main results.

\begin{Remark}[The circle example]
\label{ex: Circle}
To illustrate the implications of Theorem~\ref{theorem:representation}  let us consider the estimation of the coefficients of the unit circle as discussed in Example~\ref{ex: 1}. We assume that 
$\theta_{i,1}^2 + \theta_{i,2}^2 =1$ and $\eps_i \sim \mathcal{N}(0, \sigma^2 I_2)$, for  $i = 1, \ldots, n.$ 
Then, the upper triangular entries of $\mathbb{M}_g(\mu_n)$ is given by
\[
 \frac1n
\begin{pmatrix}
\vspace{2mm}
n \; &\; \sum_{i=1}^n X_{i,1} &\; \sum_{i=1}^n X_{i,2} &\; \sum_{i=1}^n X_{i,1}X_{i,2} &\; \sum_{i=1}^n X_{i,1}^2 &\; \sum_{i=1}^n X_{i,2}^2  \\ \vspace{2mm}
 &\; \sum_{i=1}^n X_{i,1}^2 &\; \sum_{i=1}^n X_{i,1}X_{i,2} &\; \sum_{i=1}^n X_{i,1}^2X_{i,2} &\; \sum_{i=1}^n X_{i,1}^3 &\; \sum_{i=1}^n X_{i,2}^2X_{i,1} \\
 \vspace{2mm}
 &  &\; \sum_{i=1}^n X_{i,2}^2 &\; \sum_{i=1}^n X_{i,1}X_{i,2}^2 &\; \sum_{i=1}^n X_{i,1}^2X_{i,2} &\; \sum_{i=1}^n X_{i,2}^3 \\
  \vspace{2mm}
 &  &  &\; \sum_{i=1}^n X_{i,1}^2X_{i,2}^2 &\; \sum_{i=1}^n X_{i,1}^3X_{i,2} &\;  \sum_{i=1}^n X_{i,1}X_{i,2}^3 \\
  \vspace{2mm}
 &  &  &  &\; \sum_{i=1}^n X_{i,1}^4 &\;  \sum_{i=1}^n X_{i,1}^2X_{i,2}^2 \\
  \vspace{2mm}
 &  &  &  & &\;  \sum_{i=1}^n X_{i,2}^4 \\
\end{pmatrix}.
\]
As $X_i = \theta_i +\eps_i$, using the above matrix, one can easily compute $\E\left[\mathbb{M}_g(\mu_n)\right]$ and compare it with $\mathbb{M}_g(\nu_n)$.  One can see that the terms in $\mathbb{M}_g(\mu_n)$ with even powers may give rise to a nonzero bias. In particular, setting 
$$
\bar\theta_{1}^j := \frac1n \sum_{i=1}^n \theta_{i, 1}^j, \;\; \mathrm{ for } \;\;j =1,2, \qquad  \text{ and }\qquad  \overline{\theta_1\theta_2} :=\frac1n \sum_{i=1}^n \theta_{i,1}\theta_{i,2},
$$
we get that the upper triangular part of the bias matrix (i.e., $\E\left[\mathbb{M}_g(\mu_n)\right] - \mathbb{M}_g(\nu_n)$) is given by 
\[
\sigma^2 \begin{pmatrix}
0 & \;0 &\; 0 &\; 0 & \;1 & \; 1 \\
& \; 1 &\; 0 &\; \bar\theta_2 &\; 3 \bar \theta_1 &\;  \bar\theta_1\\
&  &\; 1 &\; \bar\theta_1 &\; \bar \theta_2 &\;  3 \bar \theta_2\\
&  &  &\; \sigma^2 + \bar\theta_1^2 +  \bar\theta_2^2 &\; 3\overline{\theta_1\theta_2}  &\; 3\overline{\theta_1\theta_2} \\
&  &  &  &\; 6 \bar\theta_1^2 + 3 \sigma^2    &\; \;\sigma^2 + \bar\theta_1^2 +  \bar\theta_2^2 \\
&  &  &  &   &\; 6 \bar\theta_2^2 + 3 \sigma^2  \\
\end{pmatrix}.
\]
Assuming that the noise parameter $\sigma^2$ is known, the same kind of computations suggest to estimate the bias with 
\[
\frac{\sigma^2}{n} \begin{pmatrix}
0 & \; 0 & \;0 & 0 &1 & 1 \vspace{2mm} \\ 
& \; 1 &\; 0 & \sum_{i=1}^n X_{i,2} & 3  \sum_{i=1}^n X_{i,1} &   \sum_{i=1}^n X_{i,1} \vspace{2mm}\\
&  & \; 1 &  \sum_{i=1}^n X_{i,1} &  \sum_{i=1}^n X_{i,2} &  2 \sum_{i=1}^n X_{i,2} \vspace{2mm}\\
&  &  & - {n \sigma^2} + \sum_{i=1}^n X_{i,1}^2 + X_{i,2}^2  & 3 \sum_{i=1}^n X_{i,1}X_{i,2} & 3 \sum_{i=1}^n X_{i,1}X_{i,2}\vspace{2mm} \\
&  &  &  & \quad 6  \sum_{i=1}^n X_{i,1}^2 - 3 {n \sigma^2} \quad   & - n\sigma^2 + \sum_{i=1}^n X_{i,1}^2 + X_{i,2}^2 \vspace{2mm} \\
&  &  &  &   & 6 \sum_{i=1}^n X_{i,2}^2 - 3 {n  \sigma^2}\vspace{2mm}  \\
\end{pmatrix}.
\]
\end{Remark}

\subsection{The estimation procedure}\label{subsec:estimationProc}
Our estimation strategy for recovering the latent algebraic set $\mathcal{A}$ is outlined in Algorithm~\ref{alg:two}---it is essentially as simple as implementing PCA via singular value decomposition (SVD), except for a prior debiasing step to compute $\hat{\mathbb{M}}_g$ (cf.~Remark~\ref{rem:Connect-A}). Theorem~\ref{theorem:representation} is the key step that yields an unbiased estimator of the moment matrix $\mathbb{M}_g(\nu_n)$.

\begin{algorithm}
\KwInput{${X_1, \ldots, X_n}$ (data); $\Sigma$ (noise covariance); $g$ (degree); $r_n$ (cutoff); }
\KwOutput{$\hat q$ (the coefficients of the polynomials in the generator).}
 Compute $\hat {\mathbb{M}}_g $ based on ${X_1, \ldots, X_n}, g$ and $\Sigma$\Comment*[r]{Get the debiased moment matrix (Thm.~\ref{theorem:representation})}
  $\Lambda, U\gets$ SVD($\hat {\mathbb{M}}_g)$\Comment*[r]{Extract the eigenvalues and eigenvectors of the matrix $ \hat {\mathbb{M}}_g$ (SVD)}
  $ \hat k \gets \# \{\lambda_k< r_n\}$\Comment*[r]{ As $\Lambda = \operatorname{diag}(\lambda_1, \ldots, \lambda_{\nelem})$ and $\lambda_1 \le \cdots \le \lambda_{\nelem}$}
 $\hat q \gets [u_1, \ldots, u_{\hat k} ]$\Comment*[r]{ $u_k$ is the eigenvector of $\hat {\mathbb{M}}_g$ corr. to $\lambda_k$}
\caption{\label{alg:two}Algorithm to estimate coefficients of the generating polynomials of $\mathcal{A}$.}
\end{algorithm}

Note that our algorithm requires us to specify a choice of the degree $g$ and the noise covariance matrix $\Sigma$. Typically, $g$ will be small; but for the results on the recovery of the algebraic set in Section~\ref{subsection:recoverSet} we need $g \ge g^*$---the minimum degree of the sequence $\{\theta_i\}_{i \in \N}$ (see Definition~\ref{def:minDegree}). We assume throughout this paper that $\Sigma$ is known.

\begin{Remark}[Identifiability of the noise covariance matrix]\label{identifiable}
If $\mathcal A \ne \mathbb R^{d}$, then in the isotropic case, i.e., $\Sigma=\sigma^{2}I_d$, the variance parameter $\sigma^{2}$ is, in principle, identifiable.
Informally, suppose that the data admit two representations: 
$X_{i}=\theta_{i}+\varepsilon_{i}$ and $X_{i}=\tilde{\theta}_{i}+\tilde{\varepsilon}_{i}$, with $\varepsilon_{i}\stackrel{\text{i.i.d.}}{\sim}\mathcal N(0,\sigma_{1}^{2}I_d)$, $\tilde{\varepsilon_{i}}\stackrel{\text{i.i.d.}}{\sim}\mathcal N(0,\sigma_{2}^{2}I_d)$ and $\sigma_{1}^{2}<\sigma_{2}^{2}$ where $\{\theta_i\}_{i \ge 1}$, $\{\tilde{\theta}_i\}_{i \ge 1}$ belong to an algebraic set $\mathcal{A} \ne \mathbb R^{d}$. Under mild conditions, the empirical measure $\frac1n\sum_{i=1}^{n}\delta_{X_{i}}$ converges (along a subsequence) to a limit $\mu$ that solves the heat equation at both times $\sigma_{1}^2$ and $\sigma_{2}^2$ for two initial distributions $\nu$ and $\tilde{\nu}$ supported on $\mathcal A\subsetneq\mathbb R^{d}$, i.e., $\mu= P_{\sigma_{1}^2}*  \nu $ and $\mu= P_{\sigma_{2}^2} *\tilde\nu $, where $ P_{t}$ denotes the heat semigroup (the convolution with the Gaussian kernel with variance $t$). Since $\{P_t\}_{t\geq 0} $ is a semigroup, it  follows that $$ P_{\sigma_{1}^2} * \nu= \mu= P_{\sigma_{2}^2} * \tilde\nu = P_{\sigma_{1}^2} * P_{\sigma_{2}^2-\sigma_{1}^2}*\tilde\nu.   $$
As the heat semigroup is injective, we get $\nu=P_{\sigma_{2}^2-\sigma_{1}^2}\tilde\nu$, which is a measure with support $\R^d$. This  contradicts the fact that  $\mathcal{A} \neq \R^d$, and hence  the variance parameter $\sigma^2$ is uniquely determined.
\end{Remark}
To implement our algorithm we need to choose and fix a total order $\le_g$ on $\mathcal{B}_{d,g}$ (see~\eqref{eq:B-gN}). Such an order determines the construction of the (debiased) moment matrix and underlies the order of  the entries of the vectors returned  by Algorithm~\ref{alg:two}. The total order $\leq_g$ induces a ranking on the elements of  $\mathcal{B}_{d,g}$ which can be represented by the bijection $\tau:\mathcal{B}_{d,g}\to \{1, \dots, \nelem\}$ defined as $\tau: {\bf i} \mapsto \vert \{ {\bf j }\in \mathcal{B}_{d,g}:\  {\bf j } \le_g {\bf i }\} \vert.$
Thus, $\tau^{-1}$) specifies  the exact monomial corresponding to each entry of the $\nelem$-dimensional vector of coefficients returned in the list $\hat q$ (from Algorithm~\ref{alg:two}). Each column of the returned $\hat q$ contains the coefficients of a polynomial in the recovered generator $\{\hat P_1,\ldots, \hat P_{\hat k}\}$, i.e., $\hat q$ gives rise to the  polynomials 
\[
\hat P_j(x) := \sum_{{\bf i} \in \mathcal{B}_{d,g}} (u_j)_{\tau ({\bf i })} (\tilde x)^{{\bf i}} , \qquad \mbox{for} \;\;1\le j\le \hat k. 
\]

Given an estimated set of coefficients for the fitted polynomials, visualizing the corresponding algebraic set requires solving the associated system of polynomial equations (see Section~\ref{subsection:recoverSet}). This task can be addressed using numerical algebraic geometry tools, such as the homotopy-continuation solver Bertini \cite{bates2013,harris2021}.
If one wishes to find a point on the algebraic set that is closest to a given sample point, the problem reduces to a nearest-point projection, which in turn becomes a polynomial optimization problem. The worst-case complexity of this problem is analyzed in \cite{draisma2016}.

\section{Estimation accuracy of the debiased moment matrix $ \hat{\mathbb{M}}_g$ and its kernel}
\label{sec: Cons+CLT}
In this section we study the limiting behavior of the debiased moment matrix $\hat{\mathbb{M}}_g$ (Section~\ref{Sec:Asym-hat-M_g}). We further use the thresholded eigenvalues of $\hat{\mathbb{M}}_g$ to estimate the zero eigenvalues of population moment matrix $\mathbb{M}_g(\nu_n)$ (Section~\ref{subsec:AnalysisEigen}). We use this to estimate the kernel of $\mathbb{M}_g(\nu_n)$ by considering the eigenspace of $\hat{\mathbb{M}}_g$ corresponding to the ``near-zero'' eigenvalues (Section~\ref{sec:Est-Kernel}). Finally, we end this section with a result on the uniform convergence on compacta of the estimated polynomials, indexed by vectors in ${\rm J}_g^{(n)}$ (see~\eqref{eq:J_g_n}), in a suitable topology (Section~\ref{SubSec:EstimationVanasingPoly}).

All the proofs of this section are deferred to Appendix~\ref{Appendix:ProofSect4}. Unless stated, in this section we consider any degree $g \in \N$.

\subsection{Asymptotic analysis of $ \hat{\mathbb{M}}_g$}\label{Sec:Asym-hat-M_g}
Our first result shows that the unbiased estimator $\hat{\mathbb{M}}_g$, see~\eqref{eq: MomDec}, of $\mathbb{M}_g(\nu_n)$ is consistent. Furthermore, we obtain a limiting distribution result describing the fluctuations of $ \hat{\mathbb{M}}_g$ around its mean, as the sample size $n \to \infty$. Let us start by stating our main assumption. 
\begin{Assumption}\label{Assumption:CLT}
Recall the definition of $M_{n,i}$ from~\eqref{eq: MomDec-2}.   Assume the following two conditions:
   \begin{enumerate}
       \item  there exists a matrix $\mathfrak{S}$ such that   $ \frac{1}{n} \sum_{i=1}^n \Cov(\vec(M_{n,i})) \to \mathfrak{S}, $ as $n \to \infty$, and 
       \item for every $\eta>0$,  
       $ \frac{1}{n}\sum_{i=1}^n \| \theta_i\|^{4g} \mathds{1}_{\{\| \theta_{i}\|^{2g}> \sqrt{n}\eta\}}\to 0, \quad \textrm{ as } \;\; n \to \infty.$
   \end{enumerate}
\end{Assumption}
\begin{Remark}[On Assumption~\ref{Assumption:CLT}]
Assumption~\ref{Assumption:CLT} requires that the empirical measures $\nu_n :=\frac{1}{n}\sum_{i=1}^n \delta_{\theta_i}$ stabilize and has `light' tails. In particular, it holds in the following scenarios: (i) $ \{\theta_n\}_{n\in \N}$ is an i.i.d.\ sequence with finite $4g$-th order moment;  (ii) $ \{\theta_n\}_{n\in \N}$ is such that 
the associated empirical measure $\nu_n$ converges to some $\nu$ in the sense that $\nu_n\xrightarrow{w} \nu$ and  $\int \|\theta\|^{4g}d\nu_n(\theta)\to \int \|\theta\|^{4g}d\nu(\theta)$, as $n \to \infty$. 
\end{Remark}

\begin{Theorem}
    \label{Theorem:CLTMatrix}
    Let $\{X_i, \eps_i, \theta_i \}_{i=1}^n$ be as in~\eqref{Model}, and let $\nu_n :=\frac{1}{n}\sum_{i=1}^n \delta_{\theta_i}$. Assume further that Assumption \ref{Assumption:CLT} holds. Then, as $ n\to \infty$,
    $$  \|\hat{\mathbb{M}}_g -\mathbb{M}_g(\nu_n)\|_{{\rm Fr}}\xrightarrow{\ a.s.\ }0    \quad \text{  and  } \quad \sqrt{n}\left(  \vec(\hat{\mathbb{M}}_g)-  \vec(\mathbb{M}_g(\nu_n)) \right)\xrightarrow{\ w\ } \mathcal{N}(0, \mathfrak{S}). $$
\end{Theorem}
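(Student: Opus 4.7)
By Theorem~\ref{theorem:representation}, $\hat{\mathbb{M}}_g = \frac{1}{n}\sum_{i=1}^n M_{n,i}$ with $\E[\hat{\mathbb{M}}_g] = \mathbb{M}_g(\nu_n)$, so
\[
\vec(\hat{\mathbb{M}}_g) - \vec(\mathbb{M}_g(\nu_n)) \;=\; \frac{1}{n}\sum_{i=1}^n \vec\bigl(M_{n,i}-\E M_{n,i}\bigr)
\]
is the average of $n$ \emph{independent}, centered random vectors in $\R^{\nelem^2}$ (each $M_{n,i}$ is a polynomial in the single noise term $\eps_i$, with $\theta_i$, $\Sigma$, and $g$ fixed). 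The two assertions therefore reduce to classical limit theorems for sums of independent (non-identically distributed) random vectors, and the real work lies in controlling moments of $M_{n,i}$.

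For the strong consistency, I would apply Kolmogorov's SLLN componentwise to $\vec(M_{n,i}-\E M_{n,i})$. The sufficient condition $\sum_{i\ge 1} i^{-2}\operatorname{Var}([\vec M_{n,i}]_j)<\infty$ for each coordinate $j$ follows from Cesàro boundedness $\frac{1}{n}\sum_{i=1}^n \operatorname{Var}([\vec M_{n,i}]_j) = O(1)$ by a standard Abel-summation argument (the partial sums $S_N\le KN$ imply $\sum_i a_i/i^2 \lesssim \sum 1/i^2<\infty$). Cesàro boundedness is immediate from Assumption~\ref{Assumption:CLT}(1), yielding $\|\hat{\mathbb{M}}_g-\mathbb{M}_g(\nu_n)\|_{\mathrm{Fr}}\to 0$ almost surely.

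For the CLT, I would apply the multivariate Lindeberg--Feller theorem to the triangular array $Y_{n,i} := \frac{1}{\sqrt n}\vec(M_{n,i}-\E M_{n,i})$. The covariance-sum condition $\sum_{i=1}^n \Cov(Y_{n,i}) = \frac{1}{n}\sum_{i=1}^n \Cov(\vec M_{n,i})\to \mathfrak S$ is exactly Assumption~\ref{Assumption:CLT}(1). For the Lindeberg condition, the explicit formula~\eqref{eq: MomDec-2}, together with the boundedness of the fixed linear operators $\gamma_g$, $h_{\nelem}$, $\mathrm{sym}$ and of the tensor $\tilde\Sigma^{\otimes k}$, yields a polynomial bound $\|M_{n,i}\|_{\mathrm{Fr}}^{\,2}\le C(1+\|X_i\|^{4g})$ with $C=C(g,\Sigma)$. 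Substituting $X_i=\theta_i+\eps_i$ and $\|X_i\|^{4g}\lesssim \|\theta_i\|^{4g}+\|\eps_i\|^{4g}$, the Lindeberg sum splits into a ``signal'' piece controlled directly by Assumption~\ref{Assumption:CLT}(2) (after absorbing constants into $\eta$) and a ``noise'' piece of the form $\frac{1}{n}\sum_{i=1}^n \E\bigl[\|\eps_i\|^{4g}\mathds{1}_{\{\|\eps_i\|^{4g}>c n\eta^2\}}\bigr]$ which vanishes by dominated convergence, since $\eps_i\sim\mathcal{N}(0,\Sigma)$ is i.i.d.\ with finite moments of every order.

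The main obstacle is the Lindeberg step: it requires translating a deterministic condition on the latents $\theta_i$ (Assumption~\ref{Assumption:CLT}(2)) into tail control on the random polynomial functionals $\|M_{n,i}\|_{\mathrm{Fr}}^{\,2}$. The additive decomposition $X_i=\theta_i+\eps_i$ together with the elementary bound $\|X_i\|^{4g}\lesssim\|\theta_i\|^{4g}+\|\eps_i\|^{4g}$ cleanly separates the signal tail (handled by Assumption~\ref{Assumption:CLT}(2)) from the noise tail (absorbed by Gaussian moment bounds); once this decoupling is in place, the remaining computations are routine applications of Kolmogorov's SLLN and Lindeberg--Feller.
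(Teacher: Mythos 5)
Your proposal follows essentially the same route as the paper: the CLT is obtained from the multivariate Lindeberg–Feller theorem applied to the independent array $M_{n,i}$, with the Lindeberg condition verified by noting that $\|M_{n,i}\|_{\mathrm{Fr}}^2$ is driven by $\|X_i\|^{4g}=\|\theta_i+\eps_i\|^{4g}$ so that Assumption~\ref{Assumption:CLT}(2) plus Gaussianity of $\eps_i$ suffices, and the almost sure convergence is proved entrywise via Kolmogorov's SLLN for independent non-identically distributed summands (the paper cites Kolmogorov's 1930 result, while you supply the Abel-summation verification of the variance condition from Assumption~\ref{Assumption:CLT}(1)). The only cosmetic difference is that you spell out the signal/noise splitting of the Lindeberg sum in slightly more detail than the paper does.
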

Theorem \ref{Theorem:CLTMatrix} shows that the debiased moment matrix $\hat{\mathbb{M}}_g$ is asymptotically Gaussian with mean $\mathbb{M}_g(\nu_n)$ and variance of order $ n^{-1}$. Its proof invokes the Lindeberg–Feller CLT, relying crucially on the independence of the $X_i$'s. This result forms the theoretical bedrock for the estimation procedures developed below. In Section \ref{subsec:AnalysisEigen} we leverage it to establish a dichotomy for the spectrum of $\hat{\mathbb{M}}_g$: each eigenvalue either converges to zero at the standard rate  $ n^{-{1}/{2}}$ or remains bounded away from zero.

\subsection{Asymptotic analysis of the eigenvalues of $ \hat{\mathbb{M}}_g$}\label{subsec:AnalysisEigen}

In this subsection, we examine the asymptotic behavior of the eigenvalues of the debiased moment matrix $ \hat{\mathbb{M}}_g$. We begin by introducing some notation. Let $ {\bf H}_{\nelem}\subset \mathcal{M}_{\nelem\times \nelem}(\R)$ be the space of {\it symmetric matrices} of dimension $\nelem\times \nelem$ and  define, for each $k\in \{1, \dots, \nelem\}$, the function $\lambda_k:  {\bf H}_{\nelem}\to \R $ which maps a symmetric matrix $H \in {\bf H}_{\nelem}$ to its $k$-th smallest eigenvalue, counting multiplicities. That is, for any symmetric matrix $H \in {\bf H}_{\nelem}$, the eigenvalues satisfy $$\lambda_1(H) \le \lambda_2(H) \le \ldots \le \lambda_\nelem(H).$$
Weyl’s inequalities (see \cite[Eq.~(1.63)]{tao2012topics}) imply that, for any $k \in \{1, \dots, \nelem\}$, 
$$ \vert \lambda_k(H_1)-\lambda_k(H_2)\vert \leq \max\left( \lambda_\nelem(H_1-H_2), \lambda_\nelem(H_2-H_1)\right)=: \|H_1-H_2\|_2.$$
Thus, the absolute difference between the $k$-th smallest eigenvalues of two symmetric matrices is controlled by the spectral norm (or operator 2-norm) of their difference. This result, in turn, allows us to derive the following proposition as a direct application of Theorem~\ref{Theorem:CLTMatrix}.
\begin{Proposition}\label{Prop:ConsistencyEigenValues}
 Let $\{X_i, \eps_i, \theta_i \}_{i=1}^n$ be as in~\eqref{Model}, and let $\nu_n :=\frac{1}{n}\sum_{i=1}^n \delta_{\theta_i}$. Assume further that Assumption \ref{Assumption:CLT} holds. Then, for every $k\in \{1, \dots, \nelem\} $,
$$ |\lambda_k( \hat{\mathbb{M}}_g)-\lambda_k\left(\mathbb{M}_g(\nu_n)\right)|\xrightarrow{a.s.} 0 \qquad \text{ and } \qquad 
 |\lambda_k( \hat{\mathbb{M}}_g)-\lambda_k\left(\mathbb{M}_g(\nu_n)\right)|=\mathcal{O}_{\mathbb{P}}(n^{-\frac{1}{2}}) .$$
\end{Proposition}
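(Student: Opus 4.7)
The plan is to deduce the proposition directly from Theorem~\ref{Theorem:CLTMatrix} using the Lipschitz continuity of eigenvalues on the space of symmetric matrices, which is already stated in the text via Weyl's inequality. Both $\hat{\mathbb{M}}_g$ and $\mathbb{M}_g(\nu_n)$ are symmetric: $\mathbb{M}_g(\nu_n)$ is a sum of rank-one matrices $\phi(\theta_i)\phi(\theta_i)^\top$, and $\hat{\mathbb{M}}_g$ inherits symmetry from the symmetrization map applied in the definition of $M_{n,i}$ in~\eqref{eq: MomDec-2}. So both matrices lie in ${\bf H}_{\nelem}$ and the function $\lambda_k$ is well-defined on them.

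First, I would apply Weyl's inequality, as displayed in the text, to obtain
\[
|\lambda_k(\hat{\mathbb{M}}_g) - \lambda_k(\mathbb{M}_g(\nu_n))| \leq \|\hat{\mathbb{M}}_g - \mathbb{M}_g(\nu_n)\|_2 \leq \|\hat{\mathbb{M}}_g - \mathbb{M}_g(\nu_n)\|_{{\rm Fr}},
\]
using the standard bound $\|H\|_2 \leq \|H\|_{{\rm Fr}}$ on $\nelem\times \nelem$ symmetric matrices. The first claim of the proposition then follows immediately from the almost sure statement in Theorem~\ref{Theorem:CLTMatrix}.

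For the rate of convergence, I would combine the same inequality with the CLT part of Theorem~\ref{Theorem:CLTMatrix}. Since $\sqrt{n}\,(\vec(\hat{\mathbb{M}}_g) - \vec(\mathbb{M}_g(\nu_n)))$ converges in distribution to a Gaussian vector, the continuous mapping theorem applied to the Euclidean norm yields
\[
\sqrt{n}\,\|\hat{\mathbb{M}}_g - \mathbb{M}_g(\nu_n)\|_{{\rm Fr}} = \bigl\| \sqrt{n}\,(\vec(\hat{\mathbb{M}}_g) - \vec(\mathbb{M}_g(\nu_n))) \bigr\| \xrightarrow{\ w\ } \|Z\|, \qquad Z \sim \mathcal{N}(0, \mathfrak{S}).
\]
In particular, $\|\hat{\mathbb{M}}_g - \mathbb{M}_g(\nu_n)\|_{{\rm Fr}} = \mathcal{O}_{\mathbb{P}}(n^{-1/2})$, and plugging this into the Weyl bound above gives the second conclusion.

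There is no substantive obstacle here: the proposition is essentially a corollary of Theorem~\ref{Theorem:CLTMatrix}, obtained by transferring convergence in the Frobenius norm to convergence of individual eigenvalues via Weyl's inequality. The only point worth checking carefully is symmetry of the two matrices involved, since Weyl's inequality in the stated form requires symmetric operands; this is transparent from the construction.
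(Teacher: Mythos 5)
Your proposal is correct and follows exactly the paper's argument: Weyl's inequality combined with the bound of the spectral norm by the Frobenius norm, then the almost sure part of Theorem~\ref{Theorem:CLTMatrix} for consistency and its CLT part (plus continuous mapping) for the $\mathcal{O}_{\mathbb{P}}(n^{-1/2})$ rate. Your write-up simply makes explicit the steps the paper states in one line.
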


This result shows that the eigenvalues of of \(\hat{\mathbb{M}}_g\) lie within $\mathcal{O}_{\mathbb{P}}(n^{-\frac{1}{2}})$ of the corresponding eigenvalues of \(\mathbb{M}_g(\nu_n)\). Note, moreover, that the kernel of \(\mathbb{M}_g(\nu_n)\) eventually stabilizes; this follows from Lemma~\ref{Lemma:StableIdeal} and the fact that \({\rm I}_g^{(n)}\) (see \eqref{eq:Ig}) is in isomorphic correspondence with ${\rm J}_g^{(n)} = {\rm Ker}(\mathbb{M}_g(\nu_n))$  (see~\eqref{eq:J_g_n}). Combining this correspondence with Proposition \ref{Prop:ConsistencyEigenValues} yields the dichotomy discussed above.

\begin{Lemma}\label{Lemma:StableKernel}  Let $\{X_i, \eps_i, \theta_i \}_{i=1}^n$ be as in~\eqref{Model}, and  set $k_g := {\rm dim}\left({\rm I}_g\right)$. Suppose that Assumption \ref{Assumption:CLT} holds.
    Then,
\begin{enumerate}
       \item  for every $k\leq  k_g$,  
\begin{equation}
    \label{eq:convergenceLambdaKernel}
    \lambda_k( \hat{\mathbb{M}}_g)\xrightarrow{a.s.} 0 \qquad {\rm and}\qquad  \lambda_k( \hat{\mathbb{M}}_g)=\mathcal{O}_{\mathbb{P}}\left(n^{-\frac{1}{2}}\right), \quad \mbox{and}
\end{equation} 
\item for $k>  k_g$, there exists $\Delta>0$ such that 
  \begin{equation}
      \label{eq:BoundBelowLambdas}
       \mathbb{P}\left( \text{there exists } n_0>0: \  \lambda_k( \hat{\mathbb{M}}_g)\geq \Delta \quad \text{for all }n\geq n_0\right)=1.
  \end{equation}
   \end{enumerate}
\end{Lemma}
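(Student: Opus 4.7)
The plan is to combine the stabilization of the vanishing ideal (Lemma~\ref{Lemma:StableIdeal}) with the eigenvalue perturbation bound from Proposition~\ref{Prop:ConsistencyEigenValues}. Under the bijection between ${\rm I}_g^{(n)}$ and its coefficient realization ${\rm J}_g^{(n)}$, one has $\mathrm{Ker}(\mathbb{M}_g(\nu_n)) = {\rm J}_g^{(n)}$, and Lemma~\ref{Lemma:StableIdeal} provides $n_g \in \mathbb{N}$ with ${\rm J}_g^{(n)} = {\rm J}_g$ for every $n \geq n_g$. Hence, for all $n \geq n_g$,
\[
\lambda_1(\mathbb{M}_g(\nu_n)) = \cdots = \lambda_{k_g}(\mathbb{M}_g(\nu_n)) = 0 \quad \text{and}\quad \lambda_{k_g+1}(\mathbb{M}_g(\nu_n)) > 0.
\]
Equation~\eqref{eq:convergenceLambdaKernel} is then immediate: for $k \leq k_g$, Proposition~\ref{Prop:ConsistencyEigenValues} yields $\lambda_k(\hat{\mathbb{M}}_g) = \lambda_k(\hat{\mathbb{M}}_g) - \lambda_k(\mathbb{M}_g(\nu_n)) \xrightarrow{a.s.} 0$ with rate $O_{\mathbb{P}}(n^{-1/2})$.

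For part~2, the key reduction is to produce a deterministic $\Delta > 0$ satisfying $\liminf_n \lambda_{k_g+1}(\mathbb{M}_g(\nu_n)) \geq 2\Delta$. Once this is in hand, using the ordering $\lambda_k(\mathbb{M}_g(\nu_n)) \geq \lambda_{k_g+1}(\mathbb{M}_g(\nu_n))$ for $k > k_g$ together with Proposition~\ref{Prop:ConsistencyEigenValues}, we obtain
\[
\lambda_k(\hat{\mathbb{M}}_g) \geq \lambda_{k_g+1}(\mathbb{M}_g(\nu_n)) - |\lambda_k(\hat{\mathbb{M}}_g) - \lambda_k(\mathbb{M}_g(\nu_n))| \geq \Delta
\]
almost surely for all $n$ sufficiently large, which is exactly~\eqref{eq:BoundBelowLambdas}.

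The main obstacle is precisely this deterministic lower bound, since stabilization of the kernel alone yields positivity of $\lambda_{k_g+1}(\mathbb{M}_g(\nu_n))$ for each $n \geq n_g$ individually but not uniformly in $n$. My approach is a compactness argument. Assumption~\ref{Assumption:CLT}(2) (Lindeberg) implies $n^{-1}\sum_i \|\theta_i\|^{4g}$ is bounded, hence by Cauchy--Schwarz the entries of $\mathbb{M}_g(\nu_n)$ are uniformly bounded in $n$. Suppose for contradiction that $\lambda_{k_g+1}(\mathbb{M}_g(\nu_{n_j})) \to 0$ along some subsequence; passing to a further subsequence, $\mathbb{M}_g(\nu_{n_j}) \to M$ for a positive semidefinite symmetric matrix $M$ with $\lambda_{k_g+1}(M) = 0$, so $\dim\,\mathrm{Ker}(M) \geq k_g + 1$. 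Since ${\rm J}_g \subseteq \mathrm{Ker}(\mathbb{M}_g(\nu_n))$ for $n \geq n_g$, the inclusion ${\rm J}_g \subseteq \mathrm{Ker}(M)$ is automatic. The subtle remaining step, and the hardest part of the argument, is the reverse inclusion $\mathrm{Ker}(M) \subseteq {\rm J}_g$: a vector $v \in \mathrm{Ker}(M)$ satisfies $\lim_j n_j^{-1}\sum_{i=1}^{n_j} f_v(\theta_i)^2 = 0$ (where $f_v$ is the polynomial whose coefficient vector is $v$), and one must rule out the pathological case where $f_v$ is nonzero on $\{\theta_i\}_{i \geq 1}$ yet its squared Ces\`aro average nevertheless vanishes. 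Under the two concrete scenarios described in the Remark following Assumption~\ref{Assumption:CLT}---i.i.d.\ samples with finite $4g$-th moment, or weak convergence $\nu_n \xrightarrow{w} \nu$ with $4g$-th moment convergence---one has $\mathbb{M}_g(\nu_n) \to \mathbb{M}_g(\nu)$ along the full sequence, and $\mathrm{Ker}(\mathbb{M}_g(\nu)) = {\rm J}_g$ provided $\overline{\mathrm{supp}(\nu)}^{\zar} = \mathcal{A}$. This yields $\lambda_{k_g+1}(M) > 0$, contradicting the choice of subsequence and completing the proof.
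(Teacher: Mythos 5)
Your overall reduction coincides with the paper's: part~1 is exactly Proposition~\ref{Prop:ConsistencyEigenValues} combined with the stabilization of $\mathrm{Ker}(\mathbb{M}_g(\nu_n))={\rm J}_g^{(n)}$ from Lemma~\ref{Lemma:StableIdeal}, and part~2 is reduced to a deterministic, uniform-in-$n$ lower bound on $\lambda_{k_g+1}(\mathbb{M}_g(\nu_n))$, from which \eqref{eq:BoundBelowLambdas} follows by Weyl exactly as you write. The paper also proves that uniform bound by contradiction, but works with unit eigenvectors rather than subsequential limits of the matrices: it takes $\|v_n\|=1$, $v_n\perp{\rm J}_g$, $\mathbb{M}_g(\nu_n)v_n=\lambda_n v_n$ with $\lambda_n\to 0$, extracts a convergent subsequence $v_n\to v$, and asserts that the limiting polynomial $P$ vanishes at every $\theta_i$, hence $v\in{\rm J}_g$, contradicting $v\perp{\rm J}_g$.

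The genuine gap in your proposal is that you never establish the reverse inclusion $\mathrm{Ker}(M)\subseteq{\rm J}_g$ under the lemma's stated hypotheses: you close it only in the two special scenarios of the remark and under the extra condition that the Zariski closure of $\mathrm{supp}(\nu)$ equals $\mathcal{A}$, neither of which is part of Assumption~\ref{Assumption:CLT}; so as written you prove a weaker statement than the lemma. (A secondary inaccuracy: the Lindeberg condition alone does not imply boundedness of $n^{-1}\sum_i\|\theta_i\|^{4g}$; boundedness of the entries of $\mathbb{M}_g(\nu_n)$ should instead be extracted from part~1 of Assumption~\ref{Assumption:CLT}.) That said, your diagnosis of where the difficulty sits is accurate, and the paper's proof is silent at the very same point: from $n^{-1}\sum_{i\le n}P_n(\theta_i)^2=\lambda_n\to 0$ one only gets $P_n(\theta_{i_0})^2\le n\lambda_n$ for each fixed $i_0$, which does not force $P(\theta_{i_0})=0$. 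Your ``pathological case'' is real: take $d=g=1$, $\theta_1=1$ and $\theta_i=0$ for $i\ge 2$; Assumption~\ref{Assumption:CLT} holds and $k_g=0$, yet the smallest eigenvalue of $\mathbb{M}_1(\nu_n)$ is of order $n^{-1}$ and its eigenvectors converge to the coefficients of $P(x)=x$, which does not vanish at $\theta_1$. Hence no uniform spectral gap exists for this sequence (and \eqref{eq:BoundBelowLambdas} itself fails there), so an additional hypothesis of the type you introduce---e.g.\ $\nu_n$ converging to a limit whose support has Zariski closure $\mathcal{A}$, equivalently a uniform gap $\liminf_n\lambda_{k_g+1}(\mathbb{M}_g(\nu_n))>0$---is genuinely needed to make either your compactness argument or the paper's eigenvector argument go through.
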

In words, Lemma~\ref{Lemma:StableKernel} states that an eigenvalue \(\lambda_k(\hat{\mathbb{M}}_g)\), for $k\leq  k_g$, converges to zero at the rate \(n^{-1/2}\); the other eigenvalues remain bounded away from zero almost surely. Furthermore, the vanishing eigenvalues correspond precisely to the zero eigenvalues of \(\mathbb{M}_g(\nu_n)\), for sufficiently large $n$.

\subsection{Estimation of the kernel of $\mathbb{M}_g(\nu_n)$}\label{sec:Est-Kernel}

 Our goal is to recover the kernel of the population moment matrix $\mathbb{M}_{g}(\nu_n)$, which is expressed as ${\rm J}_g^{(n)}$ (cf.~\eqref{eq:J_g_n}). Note that Lemma~\ref{Lemma:StableIdeal} states that the decreasing chain of kernels $\{{\rm J}_g^{(n)}\}_{n\in \N}$ stabilizes as $n\to \infty$ meaning that there exists $n_g\in \N$ such that ${\rm J}_g^{(n)}={\rm J}_g^{(n+1)}$ for all $n\geq n_g$. We denote this (limiting) kernel by $ {\rm J}_g$.  
 
Although the debiased moment matrix $\hat{\mathbb{M}}_g$ and its  spectrum are asymptotically close to those of $\mathbb{M}_{g}(\nu_n)$, its kernel may not necessarily be nontrivial. 
Nevertheless, by Proposition~\ref{Prop:ConsistencyEigenValues}, we know that $\hat{\mathbb{M}}_g$ must have some eigenvalues close to zero. Consequently, the corresponding eigenspace can be expected to be close to ${\rm J}_g$. We analyze the convergence of eigensubspaces by introducing an appropriate notion of distance between them.

\begin{Definition}[Subspace distance]\label{defn:Subspace-dist}
Let $E,F$ be linear subspaces of a normed vector space $(V, \|\cdot\|)$. The chordal (or Frobenius‐projection) distance between $E$ and $F$ is defined as
$$ {\rm d}( E, F ) := \|\Pi_E- \Pi_F \|_{{\rm Fr}}= {\rm Trace}((\Pi_E- \Pi_F)(\Pi_E- \Pi_F)^\top),  $$
where $\Pi_E$ and $\Pi_F$ denote the orthogonal projection operators onto $E$ and $F$, respectively.
\end{Definition}
For a symmetric matrix $A\in \mathbf{H}_\nelem$, we define the eigenspace associated with its $j$'th smallest eigenvalue $\lambda_j(A)$ by
$$ {E}_{j}(A) :=\{  \x\in \R^\nelem: \ (A-\lambda_j(A)) \x=0\}, \quad \mbox{ for } \;j =1, \ldots,\nelem. $$
Fix a sequence $\{r_n\}_{n \in \N}$ such that $r_n\to 0$ and $n^{{1}/{2}} r_n \to \infty$, as $n \to \infty$. We estimate ${\rm J}_g$ by aggregating the eigenspaces of $\hat{\mathbb{M}}_g$ whose eigenvalues do not exceed $r_n$:
\begin{equation}\label{eq:hat-J_g}
\hat{\rm J}_g:=\bigoplus_{j\in j_n} E_j( \hat{\mathbb{M}}_g) \qquad \mbox{for} \qquad j_n :=\{ j\in \{1, \dots \nelem\}: \ \lambda_j( \hat{\mathbb{M}}_g)\leq r_n\}; 
\end{equation}
here $\oplus$ denotes the direct sum of subspaces.
Theorem~\ref{Theorem:main} below establishes the rate of convergence of the estimated eigenspace $\hat{\rm J}_g$.

\begin{Theorem}\label{Theorem:main}
Let $\{X_i, \eps_i, \theta_i \}_{i=1}^n$ be as in~\eqref{Model} and $\{r_n\}_{n\in \N}$ be a sequence such that $r_n\to 0$ and $n^{{1}/{2}} r_n \to \infty$, as $n \to \infty$. Moreover, suppose that Assumption~\ref{Assumption:CLT} holds. Then, as $n \to \infty$, we have: $$ {\rm d}\left(  \hat{\rm J}_g, {\rm J}_g \right) =\mathcal{O}_{\mathbb{P}}\left( n^{-\frac{1}{2}}\right).$$
\end{Theorem}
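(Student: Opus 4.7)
The plan is to argue in two steps. First, show that the data-driven index set $j_n$ selects, with probability tending to one, exactly the $k_g$ smallest eigenvalues of $\hat{\mathbb{M}}_g$, so that $\hat{\rm J}_g$ coincides with the span of the corresponding eigenvectors. Second, invoke a Davis--Kahan $\sin\Theta$-type perturbation bound to compare that span with ${\rm J}_g={\rm Ker}(\mathbb{M}_g(\nu_n))$, using the Frobenius rate for $\hat{\mathbb{M}}_g-\mathbb{M}_g(\nu_n)$ provided by Theorem~\ref{Theorem:CLTMatrix}.

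\emph{Step 1 (selecting the correct eigenspaces).} Lemma~\ref{Lemma:StableKernel} gives the dichotomy: for $k\le k_g$, $\lambda_k(\hat{\mathbb{M}}_g)=\mathcal{O}_{\mathbb{P}}(n^{-1/2})$, while for $k>k_g$ there exists $\Delta>0$ with $\lambda_k(\hat{\mathbb{M}}_g)\ge \Delta$ eventually almost surely. Since $r_n\to 0$ and $n^{1/2}r_n\to \infty$, the first assertion implies $\lambda_k(\hat{\mathbb{M}}_g)\le r_n$ for $k\le k_g$ with probability tending to one, and the second gives $\lambda_k(\hat{\mathbb{M}}_g)>r_n$ for $k>k_g$ on an event of probability tending to one. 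Consequently, on an event $\mathcal{E}_n$ with $\mathbb{P}(\mathcal{E}_n)\to 1$, $j_n=\{1,\dots,k_g\}$ and $\hat{\rm J}_g=\bigoplus_{j=1}^{k_g}E_j(\hat{\mathbb{M}}_g)$.

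\emph{Step 2 (Davis--Kahan perturbation).} By Lemma~\ref{Lemma:StableIdeal}, for $n$ large enough ${\rm J}_g={\rm J}_g^{(n)}={\rm Ker}(\mathbb{M}_g(\nu_n))$ coincides with the span of the $k_g$ smallest eigenvectors of $\mathbb{M}_g(\nu_n)$, all attached to the eigenvalue $0$. Proposition~\ref{Prop:ConsistencyEigenValues} transfers the lower bound from Lemma~\ref{Lemma:StableKernel}(ii) to $\mathbb{M}_g(\nu_n)$, so $\lambda_{k_g+1}(\mathbb{M}_g(\nu_n))\ge \Delta/2$ for $n$ large on $\mathcal{E}_n$. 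This produces a spectral gap of size at least $\Delta/2$ separating the $k_g$ zero eigenvalues of $\mathbb{M}_g(\nu_n)$ from the rest. The Davis--Kahan $\sin\Theta$ inequality, applied in the Frobenius norm to $\hat{\mathbb{M}}_g$ and $\mathbb{M}_g(\nu_n)$, then yields on $\mathcal{E}_n$
\[
\|\Pi_{\hat{\rm J}_g}-\Pi_{{\rm J}_g}\|_{\rm Fr}\;\le\;\frac{C}{\Delta}\,\|\hat{\mathbb{M}}_g-\mathbb{M}_g(\nu_n)\|_{\rm Fr},
\]
for a universal constant $C$.

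\emph{Conclusion and main obstacle.} Combining the last display with the rate $\|\hat{\mathbb{M}}_g-\mathbb{M}_g(\nu_n)\|_{\rm Fr}=\mathcal{O}_{\mathbb{P}}(n^{-1/2})$ from Theorem~\ref{Theorem:CLTMatrix} and with $\mathbb{P}(\mathcal{E}_n)\to 1$ gives ${\rm d}(\hat{\rm J}_g,{\rm J}_g)=\mathcal{O}_{\mathbb{P}}(n^{-1/2})$, as claimed. The only subtle point is the event-based bookkeeping in Step~1 together with the requirement that the spectral gap used by Davis--Kahan not shrink with $n$; both are handled cleanly by the dichotomy in Lemma~\ref{Lemma:StableKernel}, so the heavy lifting has already been done in Theorem~\ref{Theorem:CLTMatrix} and Lemma~\ref{Lemma:StableKernel}, and Theorem~\ref{Theorem:main} follows by a standard spectral perturbation argument.
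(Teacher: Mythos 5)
Your proposal is correct and follows essentially the same route as the paper: establish that the threshold $r_n$ selects exactly the $k_g$ smallest eigenvalues of $\hat{\mathbb{M}}_g$ (via the dichotomy of Lemma~\ref{Lemma:StableKernel}), then apply a Davis--Kahan $\sin\Theta$ bound against $\mathbb{M}_g(\nu_n)$, whose kernel equals ${\rm J}_g$ and whose $(k_g+1)$-th eigenvalue stays bounded away from zero, together with the $\mathcal{O}_{\mathbb{P}}(n^{-1/2})$ Frobenius rate of Theorem~\ref{Theorem:CLTMatrix}. The only difference is bookkeeping: the paper proves almost-sure eventual stabilization of the selected dimension by a contradiction argument, whereas you work on events of probability tending to one, which is equally sufficient for the stated $\mathcal{O}_{\mathbb{P}}$ conclusion.
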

The above result essentially states that the estimated kernel $\hat{\rm J}_g$ of the debiased moment matrix $\hat{\mathbb{M}}_g$ is close to the limiting kernel ${\rm J}_g$, in the appropriate topology. Note that the result holds even when the eigenvalues of $\mathbb{M}_{g}(\nu_n)$ are not simple, ensuring that our analysis remains valid in the presence of spectral degeneracy.

A natural next step is to establish the convergence of the polynomials indexed by vectors in $\hat{\rm J}_g$ under a suitable topology, so that we can deduce the convergence of their zeros; see Appendix~\ref{SubSec:EstimationVanasingPoly} where we present a few consistency results in this direction.

\section{Estimation of the algebraic set $\mathcal{A}$}\label{subsection:recoverSet}

In this section we propose three methods to estimate the algebraic set $\mathcal{A}$, each suited to a different set of assumptions on $\mathcal{A}$. In our first method (Section~\ref{subsection:recoverSet-Method-1}) we take the common zero set of the estimated generators themselves; this approach yields an algebraic set and is consistent, but only under a strong assumption on $\mathcal{A}$ (see Theorem~\ref{Theorem:recoveringSet}). In our second approach (Section~\ref{Section:tubes}) we construct a semi-algebraic ``tube'' estimator for $\mathcal{A}$---we consider all points at which the fitted generators are approximately zero, producing a semi-algebraic set. This procedure is consistent under substantially weaker assumptions (see Theorem \ref{Theorem:completeRecovery}), though it does not yield an algebraic set. Our third approach (Section~\ref{sec: Prior}) incorporates prior structural knowledge on the algebraic set and projects the estimated generators onto a restricted polynomial class that encodes this prior knowledge. The resulting estimator remains algebraic; its consistency is established in Theorem \ref{Theorem:recoveringSet-Prior}.

\subsection{Recovering $\mathcal{A}$ from the common zero set of the fitted generators}\label{subsection:recoverSet-Method-1}

Let \(\hat{{\rm I}}_g\) denote the collection of polynomials whose coefficient vectors (ordered according to the monomial order \(\leq_g\)) belong to the estimated subspace \(\hat{{\rm J}}_g\), i.e.,
\begin{equation}\label{eq:hat-Ig}
\hat{{\rm I}}_g := \Big\{ P(x) = \sum_{{\bf i} \in \mathcal{B}_{d,g}} a_{\bf i} \, {\tilde{x}}^{{\bf i}} \in \mathbb{R}_{\leq g}[x_1, \dots, x_d] : (a_{\bf i})_{{\bf i}} \in \hat{{\rm J}}_g \Big\}.
\end{equation}
Once \({\rm I}_g\) (see~\eqref{eq:Ig}) is estimated by \(\hat{{\rm I}}_g\), a natural estimator for the algebraic set
\[
\mathcal{A}^{(g)} := \mathcal{V}({\rm I}_g) = \left\{ \mathbf{x} \in \mathbb{R}^d : P(\mathbf{x}) = 0 \;\; \text{for all } P \in {\rm I}_g \right\}
\]
is given by the estimated zero set
\begin{equation}\label{eq:A_ng}
\mathcal{A}^{(n,g)} := \left\{ \mathbf{x} \in \mathbb{R}^d : P(\mathbf{x}) = 0 \;\; \text{for all } P \in \hat{{\rm I}}_g \right\}.
\end{equation}

Figure~\ref{fig:IntroCircle} (leftmost plot) illustrates this estimator \(\mathcal{A}^{(n,2)}\) (shown in blue) for the ``circle'' example. Visually, the estimated set \(\mathcal{A}^{(n,2)}\) appears quite close to the true algebraic set \(\mathcal{A} \equiv \mathcal{A}^{(2)}\).\footnote{
Computing the common zero set of a system of multivariate polynomials can be computationally intensive. For two-dimensional problems, we evaluate over a fixed grid along one axis (e.g., the \(x\)-axis) and solve the resulting univariate polynomial equations. For higher dimensions, a similar strategy is employed. In more complex cases (e.g., when the kernel is not simple), numerical algebraic geometry software such as \texttt{Bertini} can be used.}
Our objective is to show that, for some \(g \geq 1\), the estimator \(\mathcal{A}^{(n,g)}\) converges to \(\mathcal{A}\) in a suitable sense.
Intuitively, one might expect \(\mathcal{A}^{(n,g)}\) to converge to \(\mathcal{A}^{(g)}\), which itself may or may not equal the true algebraic set \(\mathcal{A}\). The following remark clarifies a key requirement: for this approach to consistently recover the true set \(\mathcal{A}\), it is necessary to choose \(g = g^*\), where \(g^*\) is the minimal degree for which \(\mathcal{A} = \mathcal{V}({\rm I}_{g^*})\).

\begin{Remark}[When $g \ne g^*$]\label{Rem:g=g^*}
It is easy to see that for any $g < g^*$, we have $\mathcal{A}^{(g)} \ne \mathcal{A}$. For every $g\geq g^*$, Lemma \ref{Lemma:StableIdeal} implies $ \mathcal{A}^{(g)}=\mathcal{A}$. However, we provide a simple example to illustrate that if $g > g^*$ even then $\mathcal{A}^{(n,g)}$ may fail to converge to $\mathcal{A}$. Consider the algebraic set $\mathcal{A}= \{(x,y)\in \R^2:\ x=0\}$ for which $g^*=1$ (recall Definition~\ref{def:minDegree}). If we instead use degree $g=2$, the degree-2 component of the ideal is given by ${\rm I}_2={\rm span}\{x, xy,x^2\}$. Now suppose that, due to estimation error, we obtain $ \hat {\rm I}_2 :={\rm span}\{x, xy,x^2+1/n\} $, which is a small perturbation of ${\rm I}_2$. Although $ \hat {\rm I}_2$ converges to ${\rm I}_2$ as $n\to \infty$, the zero set $ \mathcal{A}^{(n,g)}$ is empty for every finite $n$. Hence, $\mathcal{A}$ cannot be recovered from $ \hat {\rm I}_2$ in this case. This example demonstrates that for our approach to consistently recover $\mathcal{A}$, it is necessary to select 
$g = g^*$.
\end{Remark}

We now formalize the notion of convergence of the sets $ \mathcal{A}^{(n,g)}$. As algebraic sets are closed (in the Euclidean topology) and might be unbounded, we use the notion of Painlevé-Kuratowski convergence (see Definition~\ref{def:PK}; also see \cite[Chapter~4]{RockafellarWets}) to study the convergence of the $\mathcal{A}^{(n,g)}$. The Painlevé-Kuratowski convergence can be metrized via the  distance 
$$ \mathbf{d}_{\mathrm{PK}}(A,B) := \int_{0}^\infty  d_{\mathrm{H}}(A\cap \mathbb{B}_t, B\cap \mathbb{B}_t   ) e^{-t} d \, t, \qquad \mbox{for} \;\;A,B\subset \R^d. $$
Here $\mathbb{B}_t$ denotes the ball with center $ 0$ and radius $t>0$ in $\R^d$ and $d_{\mathrm{H}}(A,B)$ is the Hausdorff distance between sets $A,B \subset \R^d$, i.e.,
$$
d_{\mathrm{H}}(A, B) :=\max\left( \sup_{x\in B}d(x, A), \sup_{x'\in A}d(x', B) \right),$$ where $d(x, A) := \inf_{a \in A} \|x - a\|$ is the distance from $x$ to its closest point in $A$.
The metric space $({\rm CL}_{\neq \emptyset}(\R^d), {\bf d}_{\mathrm{PK}})$, where 
${\rm CL}_{\neq \emptyset}(\R^d) :=\{ A\subset \R^d:  \overline{A}= A\neq \emptyset\}$,  is separable \cite{RockafellarWets}.

We recall some basic notions from real algebraic geometry (see \cite{Bochnak.1998.Book} for details).

An algebraic set $A\subset \R^d$ is {\it irreducible}---or, equivalently, a real algebraic variety---if, whenever $A$ can be written as a union of algebraic subsets $F\cup G$,  one must have $F=A$ or $G=A$. Every algebraic set $A$ can be uniquely decomposed as a finite union of irreducible  sets $A_1, \dots, A_M$ (called irreducible components of $A$) with the property that $A_i\not\subset \bigcup_{j\neq i} A_j$, for $i=1,\ldots, M$ (see~\cite[Theorem~2.8.3]{Bochnak.1998.Book}). 
The notion of dimension for an algebraic set is defined in a manner analogous to the dimension of a vector subspace; see~\cite[Definition~1.2]{perrin2008algebraic}.

\begin{Definition}[Dimension of an algebraic set]
The dimension of an algebraic set $A~\subset~\R^d$, ${\rm dim}(A)$, is the maximum length  $D$ of a chain $\emptyset \neq A_1\subsetneq  \dots \subsetneq A_D$ of distinct nonempty irreducible algebraic subsets of $A$ different from $A$. 
\end{Definition}
Observe that the above definition of dimension extends that of a linear subspace to any algebraic set. 
Let us illustrate the above notion for simple algebraic sets. Any circle has dimension $1$, since the only irreducible algebraic proper non-empty subsets of the circle are the singleton points (on the circle). Analogously, the dimension of the sphere in $\R^3$ is 2.
\begin{Definition}[Regular point]\label{defn:regular-pt}
Suppose an algebraic set $A \subset \R^d$ is irreducible and let $P_1, \dots, P_k$ be a set of generators of $ I(A) $, i.e., $ I(A)  = \langle \, P_1, \dots, P_k \,\rangle.$ A point  $x\in A$ is said to be {\it regular (or nonsingular)} if 
$$ {\rm dim} ({\rm sp}\{\nabla P_i(x): \ i=1, \dots,k\} ) =d-{\rm dim}(A) , $$
where ${\rm dim}(A)$ denotes the dimension of the algebraic set $A$. We denote the set of regular points of $A$ by ${\rm reg}(A)$. The value $d-{\rm dim}(A) $ is called the co-dimension of $A$.

If $A \subset \R^d$ is any algebraic set, then $x\in A$ is said to be regular if $x$ belongs to a single irreducible component $B$ of $A$ and it is regular point of $B$. 
\end{Definition}

Hence, a regular point of an algebraic set is one where the rank of the Jacobian matrix of its defining polynomials is the co-dimension of $A$. 
Around a regular point $x$, the algebraic set $A$ behaves like a $\mathcal{C}^\infty $ manifold of dimension ${\rm dim}(A)$, and the (standard) tangent space at $x$ is well-defined and agrees with the orthogonal complement 
$ ({\rm sp}\{\nabla P_i(x): \ i=1, \dots,k\} )^\perp $ (see \cite[Proposition 3.3.10]{Bochnak.1998.Book}). Now we are ready to state an important assumption that will be used in our main result in this subsection.

\begin{Assumption}[Complete intersection with same degree]\label{assumption:superStrong}
    We assume that $g$ is such that $\mathcal{A}=\mathcal{V}({\rm I}_{g})$ is irreducible and that 
    ${\rm dim}({\rm I}_g)+ {\rm dim}(\mathcal{A})=d$.\footnote{${\rm dim}({\rm I}_g)$ equals the number of linearly independent polynomials of degree $\le g$ vanishing at $\{\theta_i\}_{i\ge1}$ .} 
 \end{Assumption}

\begin{Remark}[On Assumption~\ref{assumption:superStrong}]\label{rem:Assump} Assumption~\ref{assumption:superStrong} is slightly restrictive. For instance, the circle $\mathcal{A} := \mathcal{V}(\{x,x^2+y^2+z^2-1\})$ in $\R^3$ does not satisfy Assumption~\ref{assumption:superStrong} as we explain below. Note that ${\rm dim}(\mathcal{A})=1$, and for $g\geq 2$, we have ${\rm dim}({\rm I}_g)\geq 5$ (note that $\{x,x^2+y^2+z^2-1, xy,xz, x^2 \} \subset {\rm I}_g$). This contradicts Assumption~\ref{assumption:superStrong}. 

Further, Assumption~\ref{assumption:superStrong} implies that $g$ must be $g^*$ 
and  that all nonzero polynomials of degree less than or equal to $g$, vanishing at $\mathcal{A}$, must have exact degree $g=g^*$.  To see this, we argue via contradiction. If the degree of $ P  \ne 0 \in {\rm I}_{g}$ is strictly smaller than $g$, for some $g$ such that Assumption~\ref{assumption:superStrong} holds, then the linearly independent polynomials $x_1 P(x), x_2 P(x), \ldots, x_d P(x)$ belong to ${\rm I}_{g}$. This implies that ${\rm dim}({\rm I}_{g}) = d$, which in turn implies, by Assumption~\ref{assumption:superStrong}, that $\mathcal{A}$ is the empty set. This contradicts the fact that $P$ vanishes at $\mathcal{A}$. Hence, $g^*$ is the unique degree for which Assumption~\ref{assumption:superStrong} can hold. Thus, Assumption~\ref{assumption:superStrong} indirectly assumes the exact knowledge of the degree $g^*$.
\end{Remark} 

The following theorem, proved in Appendix~\ref{Appendix:ProofSect5}, shows the consistency of the estimated algebraic set $\mathcal{A}^{(n,g)}$ to $\mathcal{A}$, under the Painlev\'{e}-Kuratowski (PK) topology. Moreover, the convergence locally around regular points happens at the parametric rate $n^{-1/2}$.

\begin{Theorem}\label{Theorem:recoveringSet}
Let \(\{X_i, \varepsilon_i, \theta_i\}_{i=1}^n\) follow the model in \eqref{Model}, and let \(\{r_n\}_{n \in \mathbb{N}}\) be a sequence such that \(r_n \to 0\) and \(n^{1/2} r_n \to \infty\) as \(n \to \infty\). Suppose Assumptions~\ref{Assumption:CLT} and \ref{assumption:superStrong} hold. Then:
\begin{enumerate}
    \item for every \(x_0 \in \operatorname{reg}(\mathcal{A})\), there exists a sequence \(\{x_n\}_{n \in \mathbb{N}}\), with \(x_n \in \mathcal{A}^{(n,g)}\), such that \(x_n \xrightarrow{\mathbb{P}} x_0\);
    
    \item any limit point in probability of a sequence \(\{x_n\}_{n \in \mathbb{N}}\), with \(x_n \in \mathcal{A}^{(n,g)}\), lies in \(\mathcal{A}\).
\end{enumerate}
If, in addition, \(\mathcal{A} = \overline{\operatorname{reg}(\mathcal{A})}\), then, as $n \to \infty$,
\[
\mathbf{d}_{\mathrm{PK}}(\mathcal{A}^{(n,g)}, \mathcal{A}) \xrightarrow{\mathbb{P}} 0.
\]
Moreover, for any \(x_0 \in \operatorname{reg}(\mathcal{A})\), there exists an open Euclidean neighborhood \(\mathcal{U}\) of \(x_0\) such that
\[
d_{\mathrm{H}}(\mathcal{A} \cap \mathcal{U},\; \mathcal{A}^{(n,g)} \cap \mathcal{U}) = \mathcal{O}_{\mathbb{P}}(n^{-1/2}).
\]
\end{Theorem}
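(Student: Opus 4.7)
\medskip

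\noindent\textbf{Proof proposal.} The plan is to lift the subspace convergence \(\hat{\mathrm J}_g \to \mathrm J_g\) at rate \(n^{-1/2}\) (Theorem~\ref{Theorem:main}) to a convergence of generators, then apply a quantitative implicit function theorem at regular points. First, note that by Lemma~\ref{Lemma:StableKernel}, with probability tending to one the estimated kernel \(\hat{\mathrm J}_g\) has the same dimension \(k := \dim({\rm I}_g)\) as \(\mathrm J_g\). Fix an orthonormal basis \(v_1,\ldots,v_k\) of \(\mathrm J_g\), and let \(\hat v_1,\ldots,\hat v_k\) be the orthonormal basis of \(\hat{\mathrm J}_g\) obtained by projecting \(v_j\) onto \(\hat{\mathrm J}_g\) and orthonormalizing. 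The chordal distance bound of Theorem~\ref{Theorem:main} gives \(\|\hat v_j - v_j\| = \mathcal O_{\mathbb P}(n^{-1/2})\). Writing \(P_j,\hat P_j\) for the polynomials with coefficient vectors \(v_j,\hat v_j\), the map \(v\mapsto\) (coefficients of the corresponding polynomial) is linear, so on any compact set \(K\subset\mathbb R^d\) one has
\[
\sup_{x\in K}\bigl|\hat P_j(x)-P_j(x)\bigr| + \sup_{x\in K}\bigl\|\nabla\hat P_j(x)-\nabla P_j(x)\bigr\| = \mathcal O_{\mathbb P}(n^{-1/2}),
\]
since all polynomials in question have degree at most \(g\).

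Next, I would handle the outer statement (2) directly: if \(x_n\in\mathcal A^{(n,g)}\) and \(x_n\xrightarrow{\mathbb P} x_\infty\), then \(\hat P_j(x_n)=0\) for each \(j\), and the uniform convergence on any compact neighborhood of \(x_\infty\) yields \(P_j(x_\infty)=0\); by Remark~\ref{Rem:g=g^*} (since Assumption~\ref{assumption:superStrong} forces \(g=g^*\)), \(\{P_1,\ldots,P_k\}\) generates \(\mathrm I_\infty\), so \(x_\infty\in\mathcal V(\mathrm I_g)=\mathcal A\). For statement (1), pick \(x_0\in\mathrm{reg}(\mathcal A)\). By Assumption~\ref{assumption:superStrong} the Jacobian \(J(x_0)\) of \((P_1,\ldots,P_k)^\top\) at \(x_0\) has rank \(k=d-\dim(\mathcal A)\), hence is a surjective linear map \(\mathbb R^d\to\mathbb R^k\). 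The perturbed Jacobian \(\hat J(x_0)\) is \(\mathcal O_{\mathbb P}(n^{-1/2})\) close and therefore also surjective with high probability. A one-step Newton correction in the column span of \(\hat J(x_0)^\top\) (equivalently, the quantitative inverse function theorem applied to a suitable restriction) produces \(\hat x_n\) with \(\hat P_j(\hat x_n)=0\) for all \(j\) and \(\|\hat x_n-x_0\|=\mathcal O_{\mathbb P}(n^{-1/2})\); this gives both (1) and the ``from \(\mathcal A\) to \(\mathcal A^{(n,g)}\)'' direction of the local Hausdorff bound.

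For the reverse direction of the Hausdorff bound, I would choose a small open Euclidean neighborhood \(\mathcal U\) of \(x_0\) on which both \(J\) and \(\hat J\) are uniformly surjective with uniformly bounded right inverses. On such a \(\mathcal U\), the implicit function theorem (applied after a coordinate split into the kernel of \(J(x_0)\) and its orthogonal complement) represents \(\mathcal A\cap\mathcal U\) as the graph of a smooth function \(h\) of \(d-k\) variables, and identically represents \(\mathcal A^{(n,g)}\cap\mathcal U\) as the graph of \(\hat h_n\). A quantitative version of the theorem yields \(\sup\|\hat h_n-h\|=\mathcal O_{\mathbb P}(n^{-1/2})\) from the uniform coefficient convergence of the previous paragraph, which implies the claimed local Hausdorff rate. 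Finally, statement (3) follows from (1) and (2) together with \(\mathcal A=\overline{\mathrm{reg}(\mathcal A)}\): inner convergence holds at every regular point (hence at every point of \(\mathcal A\) by density and a diagonal argument inside the ball \(\mathbb B_t\)), outer convergence follows from (2), and the two together metrize Painlev\'e--Kuratowski convergence on compacta, which transfers to \(\mathbf d_{\mathrm{PK}}\) via the dominated convergence theorem applied to \(t\mapsto d_{\mathrm H}(\mathcal A\cap\mathbb B_t,\mathcal A^{(n,g)}\cap\mathbb B_t)e^{-t}\).

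The main obstacle is the parametric local Hausdorff bound: one must check that the implicit function theorem can be applied uniformly with explicit control on the size of the neighborhood and on the Lipschitz constants, since the perturbation is only \(\mathcal O_{\mathbb P}(n^{-1/2})\) in coefficients but the underlying constants depend on \(\|J(x_0)^\dagger\|\) and on second-derivative bounds of the polynomials \(P_j,\hat P_j\). The complete-intersection Assumption~\ref{assumption:superStrong} is crucial here because it ensures the number of equations equals the codimension, so the implicit function theorem applies without rank deficiency in the limit.
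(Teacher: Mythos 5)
Your proposal follows essentially the same route as the paper's proof: transfer the \(\mathcal{O}_{\mathbb{P}}(n^{-1/2})\) subspace convergence of Theorem~\ref{Theorem:main} to an orthonormal system of generators, get the outer inclusion (statement 2) from uniform convergence of the polynomials on compacta, get the inner inclusion (statement 1) and the local \(n^{-1/2}\) Hausdorff rate via the implicit function theorem at a regular point (where Assumption~\ref{assumption:superStrong} makes the system square in the normal directions), and deduce Painlev\'e--Kuratowski consistency from \(\mathcal{A}=\overline{\operatorname{reg}(\mathcal{A})}\) together with the \(\liminf/\limsup\) inclusions. Two small points to tidy: the uniformity you flag as the ``main obstacle'' is exactly what the paper resolves by applying the implicit function theorem jointly in the (polynomial-coefficient, point) variables, which yields a single \(\mathcal{C}^1\) map \(\psi\) — Lipschitz uniformly over the compact slice of \(v\)'s — parametrizing both \(\mathcal{A}\cap\mathcal{U}\) and \(\mathcal{A}^{(n,g)}\cap\mathcal{U}\) on a fixed neighborhood, so you may prefer that formulation over an \(n\)-by-\(n\) quantitative IFT; and a literal one-step Newton correction does not produce an exact common zero of \(\hat P_1,\dots,\hat P_{\hat k}\), so for membership in \(\mathcal{A}^{(n,g)}\) you must invoke the exact-zero conclusion of the quantitative inverse/implicit function theorem on the restricted square system (as your parenthetical indicates), together with the dimension stabilization \(\hat k=k\) so that vanishing of the chosen basis indeed implies vanishing of all of \(\hat{\rm I}_g\).
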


The theorem asserts that the estimator $\mathcal{A}^{(n,g)}$ never ``misses'' any regular part of the algebraic set $\mathcal{A}$; conversely, any probabilistic limit of a sequence $x_n \in \mathcal{A}^{(n,g)}$ must lie in $\mathcal{A}$. Further, if $\mathcal{A}$ is the closure of its regular locus---i.e., it contains no isolated singular components---then the entire set $\mathcal{A}^{(n,g)}$ converges (in probability) in the Painlev\'{e}-Kuratowski probabilistic sense, not just pointwise. 
In addition, near any regular point, the estimator converges at the parametric rate—the same rate observed when estimating a linear subspace in PCA. For example, in the leftmost panel of Figure~\ref{fig:IntroCircle}, all points of the underlying algebraic set (the unit circle) are regular, allowing for convergence at the parametric rate.

\begin{Remark}[On the proof of Theorem~\ref{Theorem:recoveringSet}] The proof of Theorem~\ref{Theorem:recoveringSet}, given in Appendix~\ref{Appendix:ProofSect5}, relies on a local analytic argument centered around the regular points of the algebraic set $\mathcal{A}$. At such points, Assumption~\ref{assumption:superStrong} enables the application of the implicit function theorem, which locally parametrizes the algebraic set in terms of a smooth function. The key step then shows that small perturbations of the generators---obtained from the estimated kernel of the empirical moment matrix $\hat{\mathbb{M}}_{g}$---lead to perturbed zero sets that remain close to the true set in a uniform sense. 
Global consistency in the Painlevé–Kuratowski topology follows by combining the local arguments with the assumption that $\mathcal{A}$ is the closure of its regular part.
\end{Remark}

Note that the condition $\mathcal{A}=\overline{{\rm reg}(\mathcal{A})}$ in Theorem~\ref{Theorem:recoveringSet} is not automatic for real algebraic sets. A classical counter-example is the {\it Whitney umbrella} ${\mathcal{V}(x^2-y^2 z)}~\subset~\R^3$, which is irreducible, has all its regular points in the half-space $\{z\geq 0\}$, and contains the entire line $\{x=0,y=0,z<0\}$ of singular points. Thus, the closure of all its  regular points does not coincide with the whole algebraic set $\mathcal{A}$.

\bgroup

\subsection{Approximation by semi-algebraic sets}\label{Section:tubes}

Theorem~\ref{Theorem:recoveringSet} establishes that the algebraic set \(\mathcal{A}\) can be consistently estimated under Assumptions~\ref{Assumption:CLT} and \ref{assumption:superStrong}. However, as noted following the theorem, Assumption~\ref{assumption:superStrong} is quite strong. This motivates the development of an alternative estimation approach that remains valid under weaker conditions.

We propose approximating \(\mathcal{A}\) using semi-algebraic sets. A \emph{semi-algebraic set} (see \cite[Definition~2.1.4]{Bochnak.1998.Book}) is any set that can be written as a finite union and intersection of sets of the form
\[
\{ x \in \mathbb{R}^d : P(x) \leq 0 \} \quad \text{or} \quad \{ x \in \mathbb{R}^d : P(x) < 0 \},
\quad \text{for some } P \in \mathbb{R}[x_1, \dots, x_d].
\]

Let \({u}_1, \dots, {u}_{\hat{k}}\) be an orthonormal basis of the estimated kernel $\hat{{\rm J}}_g $ obtained from Algorithm~\ref{alg:two}, and let \(\hat{P}_1, \dots, \hat{P}_{\hat{k}}\) be the corresponding polynomials. For any \(\lambda > 0\), we define the semi-algebraic ``tube'' estimator by
\[
\mathcal{A}_{\lambda}^{(n,g)} := \left\{ x \in \mathbb{R}^d : |\hat{P}_i(x)| \leq \lambda \;\text{ for all } i = 1, \dots, \hat{k} \right\}.
\]
Thus, \(\mathcal{A}_{\lambda}^{(n,g)}\) is a semi-algebraic neighborhood of the zero set of the fitted generators. We consider a sequence \(\lambda_n \to 0\) as \(n \to \infty\). The center panel of Figure~\ref{fig:IntroCircle} illustrates the semi-algebraic estimator \(\mathcal{A}_{\lambda}^{(n,2)}\) for Example~\ref{ex: 2}. In this example, all points on the algebraic set are regular except for the singular intersection point at \((0, 0)\), where the two lines meet.

The next result, Theorem~\ref{Theorem:completeRecovery} (proved in Appendix~\ref{App:Thm-Semi-Alg}), demonstrates that this semi-algebraic tube estimator converges to the target algebraic set under no assumptions on \(\mathcal{A}\). Moreover, the convergence occurs in Hausdorff distance at rate \(\lambda_n\), uniformly over compact neighborhoods of each regular point.

\begin{Theorem}
\label{Theorem:completeRecovery}
Fix \( g \geq g^* \). Let \(\{X_i, \varepsilon_i, \theta_i\}_{i=1}^n\) follow the model in~\eqref{Model}, and let \(\{r_n\}_{n \in \mathbb{N}}\) and \(\{\lambda_n\}_{n \in \mathbb{N}}\) be sequences such that \(r_n \to 0\), \(\lambda_n \to 0\), and \(n^{1/2} r_n \to \infty\), \(n^{1/2} \lambda_n \to \infty\) as \(n \to \infty\). Suppose Assumption~\ref{Assumption:CLT} holds. Then,
\[
\mathbf{d}_{\mathrm{PK}}\left(\mathcal{A}, \mathcal{A}_{\lambda_n}^{(n,g)}\right) \xrightarrow{\mathbb{P}} 0.
\]
Moreover, for any \(x_0 \in \mathrm{reg}(\mathcal{A})\), there exists a Euclidean neighborhood \(\mathcal{U}\) of \(x_0\) such that
\[
d_{\mathrm{H}}\left(\mathcal{A} \cap \mathcal{U},\, \mathcal{A}_{\lambda_n}^{(n,g)} \cap \mathcal{U} \right) = \mathcal{O}_{\mathbb{P}}(\lambda_n).
\]
\end{Theorem}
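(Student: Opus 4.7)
The proof proceeds in two parts mirroring the two conclusions, both anchored to Theorem~\ref{Theorem:main}, which gives $\|\hat\Pi-\Pi\|_{\mathrm{op}}\le \|\hat\Pi-\Pi\|_{\mathrm{Fr}}=\mathcal{O}_{\mathbb{P}}(n^{-1/2})$ for the orthogonal projections $\hat\Pi,\Pi$ onto $\hat{{\rm J}}_g$ and ${\rm J}_g$. Lemma~\ref{Lemma:StableKernel} combined with the thresholding rule $r_n$ forces $\hat k=k_g$ with probability tending to one, so, after an orthogonal rotation (a Davis--Kahan--type selection), we may work with orthonormal bases $\{\hat P_i\}_{i=1}^{k_g}$ of $\hat{\rm J}_g$ and $\{P_i\}_{i=1}^{k_g}$ of ${\rm J}_g$ (viewed as coefficient vectors) satisfying $\max_i\|\hat P_i-P_i\|_{\mathrm{coef}}=\mathcal{O}_{\mathbb{P}}(n^{-1/2})$. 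Since the coefficient norm dominates the sup norm of the associated polynomial on any compact $K$ (by boundedness of each monomial on $K$), this passes to $\sup_{x\in K}\max_i|\hat P_i(x)-P_i(x)|=\mathcal{O}_{\mathbb{P}}(n^{-1/2})$ and, by differentiating monomials, to the analogous uniform bound on gradients. This is the quantitative input that drives both parts.

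\textbf{Painlev\'e--Kuratowski convergence.} Fix a compact $K\subset\R^d$ and $\epsilon>0$. Since $g\ge g^*$, each $P_i$ vanishes on $\mathcal{A}$, so the uniform bound above gives $\sup_{x\in K\cap\mathcal{A}}\max_i|\hat P_i(x)|=\mathcal{O}_{\mathbb{P}}(n^{-1/2})$; combined with $\sqrt n\,\lambda_n\to\infty$, this forces $\mathcal{A}\cap K\subseteq\mathcal{A}^{(n,g)}_{\lambda_n}$ with probability tending to one. For the reverse inclusion, introduce the projection sums of squares $Q(x):=\phi(x)^\top\Pi\phi(x)=\sum_i P_i(x)^2$ and $\hat Q(x):=\phi(x)^\top\hat\Pi\phi(x)=\sum_i\hat P_i(x)^2$. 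Since $g\ge g^*$ implies $\mathcal{V}(Q)=\mathcal{A}$, continuity and compactness give $Q\ge c_\epsilon>0$ on the shell $K\setminus B_\epsilon(\mathcal{A})$. From $\sup_K|Q-\hat Q|\le\|\phi\|_{K,\infty}^2\|\Pi-\hat\Pi\|_{\mathrm{op}}=\mathcal{O}_{\mathbb{P}}(n^{-1/2})$ and $\hat Q\le k_g\lambda_n^2\to 0$ on $\mathcal{A}^{(n,g)}_{\lambda_n}$, we conclude that $(K\setminus B_\epsilon(\mathcal{A}))\cap\mathcal{A}^{(n,g)}_{\lambda_n}=\emptyset$ eventually with probability tending to one. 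The two inclusions yield $d_{\mathrm{H}}(\mathcal{A}^{(n,g)}_{\lambda_n}\cap\mathbb{B}_t,\mathcal{A}\cap\mathbb{B}_t)\xrightarrow{\mathbb{P}}0$ for every $t>0$, and dominated convergence in the integral defining $\mathbf{d}_{\mathrm{PK}}$ (using $d_{\mathrm{H}}\le 2t$ on $\mathbb{B}_t$) gives the first conclusion.

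\textbf{Local parametric rate and main obstacle.} Fix $x_0\in\mathrm{reg}(\mathcal{A})$ and set $c:=d-\dim(\mathcal{A})$. Since $g\ge g^*$ yields $I(\mathcal{A})=\langle{\rm I}_g\rangle$, Definition~\ref{defn:regular-pt} implies that $\mathrm{span}\{\nabla P_i(x_0)\}_{i=1}^{k_g}$ has dimension $c$, so fixed linear combinations $F_l:=\sum_i\alpha_{l,i}P_i$, $l=1,\dots,c$, can be chosen with $\nabla F_1(x_0),\dots,\nabla F_c(x_0)$ orthonormal; set $\hat F_l:=\sum_i\alpha_{l,i}\hat P_i$. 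The uniform $C^1$-closeness of $\hat F$ to $F$ noted in the overall plan implies that, with probability tending to one, $\sigma_{\min}(D\hat F(x))\ge\kappa$ for some $\kappa>0$ and all $x$ in a fixed Euclidean neighborhood $\mathcal{U}$ of $x_0$. The implicit function theorem applied to $F$ parametrizes $\mathcal{A}\cap\mathcal{U}$ as a smooth graph; applied to $\hat F$ it yields a smooth zero-graph $\{\hat F=0\}\cap\mathcal{U}$ within Hausdorff distance $\mathcal{O}_{\mathbb{P}}(\kappa^{-1}\|\hat F-F\|_{\mathcal{U},\infty})=\mathcal{O}_{\mathbb{P}}(n^{-1/2})$ of $\mathcal{A}\cap\mathcal{U}$. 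The same lower bound on $\sigma_{\min}(D\hat F)$ shows that $\{\|\hat F\|_\infty\le\lambda_n\}\cap\mathcal{U}$, and hence $\mathcal{A}^{(n,g)}_{\lambda_n}\cap\mathcal{U}$, lies in a tube of radius $\sqrt{c}\,\lambda_n/\kappa$ around $\{\hat F=0\}$; since $n^{-1/2}=o(\lambda_n)$, combining the tube width with the $\mathcal{O}_{\mathbb{P}}(n^{-1/2})$ displacement of $\{\hat F=0\}$ from $\mathcal{A}$ yields $d_{\mathrm{H}}(\mathcal{A}\cap\mathcal{U},\mathcal{A}^{(n,g)}_{\lambda_n}\cap\mathcal{U})=\mathcal{O}_{\mathbb{P}}(\lambda_n)$. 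The principal technical obstacle is exhibiting a neighborhood $\mathcal{U}$ and a constant $\kappa$ that can be taken independent of $n$ on a single high-probability event; this demands a quantitative implicit function theorem compatible with the uniform $C^1$ convergence $\hat F\to F$, so that both the parametrization of $\{\hat F=0\}$ and the tube inclusion are valid on the same $\mathcal{U}$ simultaneously.
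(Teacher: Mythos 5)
Your Painlevé--Kuratowski half is essentially correct and takes a mildly different route from the paper: you compare the projection quadratic forms \(Q(x)=\phi(x)^\top\Pi\phi(x)\) and \(\hat Q(x)=\phi(x)^\top\hat\Pi\phi(x)\) and use compactness of the shell \(K\setminus B_\epsilon(\mathcal{A})\), whereas the paper's Lemma~\ref{Lemma:auxiliaryRecoveringSet} expands the \(\hat P_i\)'s and the \(P_i\)'s in each other's bases and verifies the liminf/limsup inclusions directly; both arguments rest on Theorem~\ref{Theorem:main} and Lemma~\ref{Lemma:StableKernel}, and your sum-of-squares device is a legitimate (arguably cleaner) substitute, modulo the small boundary care needed when passing from the two inclusions to \(d_{\mathrm{H}}(\cdot\cap\mathbb{B}_t,\cdot\cap\mathbb{B}_t)\) and the subsequence argument that converts the statements into convergence in probability in the separable space \(({\rm CL}_{\neq\emptyset}(\R^d),\mathbf{d}_{\mathrm{PK}})\).

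The local rate part has a genuine gap. First, a slip: you set \(c=d-\dim(\mathcal{A})\), but at a regular point of a reducible \(\mathcal{A}\) the relevant codimension is that of the unique irreducible component containing \(x_0\), which can differ from \(d-\dim(\mathcal{A})\). More seriously, after choosing combinations \(F_1,\dots,F_c\) of a basis of \({\rm I}_g\) with independent gradients at \(x_0\), you assert that the implicit function theorem applied to \(F\) ``parametrizes \(\mathcal{A}\cap\mathcal{U}\).'' What the IFT parametrizes is \(\{F=0\}\cap\mathcal{U}\); since one only knows \(\mathcal{A}\subseteq\{F=0\}\), the problematic direction of the Hausdorff bound (tube points being \(\mathcal{O}_{\mathbb{P}}(\lambda_n)\)-close to \(\mathcal{A}\)) does not follow unless you prove the local equality \(\{F_1=\cdots=F_c=0\}\cap\mathcal{U}=\mathcal{A}\cap\mathcal{U}\). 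Establishing this with defining polynomials of degree at most \(g\) is precisely the content of the paper's Lemma~\ref{lemma:Local-ring-same-degree}, which uses that \({\rm I}_g\) generates \(I(\mathcal{A})\) for \(g\ge g^*\), the chain rule at \(x_0\) (also needed to justify your claim that the gradients of an \({\rm I}_g\)-basis span the full normal space, since Definition~\ref{defn:regular-pt} is stated for generators of the ideal of the component), and a second application of the IFT to match the two local graphs; alternatively one can close it by an invariance-of-domain/connectedness argument once \(\mathcal{A}\) is known to be a \(C^\infty\) manifold of the right dimension near \(x_0\) — but some such argument must be supplied. Finally, the quantitative implicit function theorem for the random map \(\hat F\), which you flag as the principal obstacle, is avoidable: the paper instead Taylor-expands the fixed polynomials \(R_1,\dots,R_D\) around points of \(\mathcal{A}\), uses invertibility of their (deterministic) Jacobian on a fixed neighborhood to get \(\|v-\phi(w)\|\le L\,\|(R_1(v,w),\dots,R_D(v,w))\|\), and bounds \(\max_i|R_i|\) on the tube by \(\mathcal{O}_{\mathbb{P}}(\lambda_n)\) via \(\|\Pi_{\hat{\rm J}_g}-\Pi_{{\rm J}_g}\|_{\rm Fr}=\mathcal{O}_{\mathbb{P}}(n^{-1/2})\), which yields the stated rate without ever inverting a data-dependent Jacobian.
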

\noindent

The condition \(n^{1/2} \lambda_n \to \infty\) ensures that the tube radius \(\lambda_n\) shrinks slowly enough to contain the stochastic variability in the data, while still concentrating tightly around the underlying algebraic set. Theorem~\ref{Theorem:completeRecovery} offers a practical guideline for selecting \(\lambda_n\): it should decay at a slower rate than the parametric benchmark \(n^{-1/2}\), to guarantee that every point in \(\mathcal{A}\) is eventually covered, but not too slowly, in order to preserve a sharp rate of convergence. A convenient and theoretically justified choice is $\lambda_n = (\log n)\,n^{-1/2}$. Under this setting, for any regular point \(x_0 \in \operatorname{reg}(\mathcal{A})\), there exists a Euclidean neighborhood \(\mathcal{U}\) of \(x_0\) such that
\[
d_{\mathrm{H}}\!\left(\mathcal{A} \cap \mathcal{U},\;
           \mathcal{A}^{(n,g)}_{\lambda_n} \cap \mathcal{U}\right)
    = \mathcal{O}_{\mathbb{P}}\!\left((\log n)\,n^{-1/2}\right).
\]
Furthermore, if \(\mathcal{A}\) is bounded and composed entirely of regular points, a standard compactness argument extends this local consistency to a global one:
\[
d_{\mathrm{H}}\!\left(\mathcal{A},\mathcal{A}^{(n,g)}_{\lambda_n}\right)
   = \mathcal{O}_{\mathbb{P}}\!\left((\log n)\,n^{-1/2}\right).
\]

\subsection{Estimation of $\mathcal{A}$ with known prior knowledge on its structure}
\label{sec: Prior}
In this subsection we focus on the estimation of the algebraic set $\mathcal{A}$ with some predefined structure. For instance, in many applications we may a priori  know that $\mathcal{A}$ is defined by a polynomial that factors into several lower-degree components. The following gives a simple example.

\begin{Example}[The `cross' example revisited] 
In Example~\ref{ex: 2} (also see the center plot of Figure~\ref{fig:IntroCircle}) the algebraic set $\mathcal{A}$ under consideration is the zero set of the polynomial $P_1(x,y) = x^2 - y^2 \in \R[x,y]$, which can be factorized as $(x-y)(x+y)$. Suppose we knew that the unknown polynomial factors as the product of two affine functions, i.e., $P_1 \in S := \{P(x,y) = U(x,y) V(x,y)  \in \R[x,y]:  U, V \in \R_{\le 1} [x,y]  \}$. Our goal is to recover $\mathcal{A}$ subject to this information. Our estimation procedure (in Algorithm~\ref{alg:two}) will, with probability 1, result in a non-degenerate conic which will not factor as the product of two affine functions. Usually, the conic that one recovers for this example is a hyperbola (see the recovered algebraic set in red in the leftmost plot of Figure~\ref{fig:Proj}). Given the estimated coefficient vector $u_1 \in \R^6$ of the fitted polynomial (obtained via Algorithm~\ref{alg:two}) we must instead find its nearest polynomial in $S$. Approximate factorization of this kind is a well-studied problem in algebraic geometry; see~\cite{kaltofen2008approximate} for an algorithm when the polynomial factors into two pieces. Their approach observes that multiplying two polynomials is linear in the coefficients of one factor, i.e., if $b$ collects the coefficients of $V$ and
$$ M(a)= 
\begin{pmatrix}
a_1 & a_2 &  a_3 & 0   & 0   &0 \\
0   & a_1 & 0    & a_3 & a_2 & 0 \\
0   &0    & a_1  & a_2 & 0   & a_3 
\end{pmatrix},
$$
with $a= (a_1,a_2,a_3)$ the coefficients of $U$, then 
$P=UV$ corresponds to $M(a)^\top b$. The closest factorization is therefore obtained by solving
\[
\inf_{a,b \in \R^3} \| 
M(a)^\top b - u_1\|.
\]
Although computationally heavier than our spectral step, this optimization delivers factors whose product lies in 
$S$ and is optimally close to the polynomial with coefficient vector $u_1$.  
\end{Example}
\begin{figure}[h!]
    \centering
\includegraphics[width=0.32\linewidth]{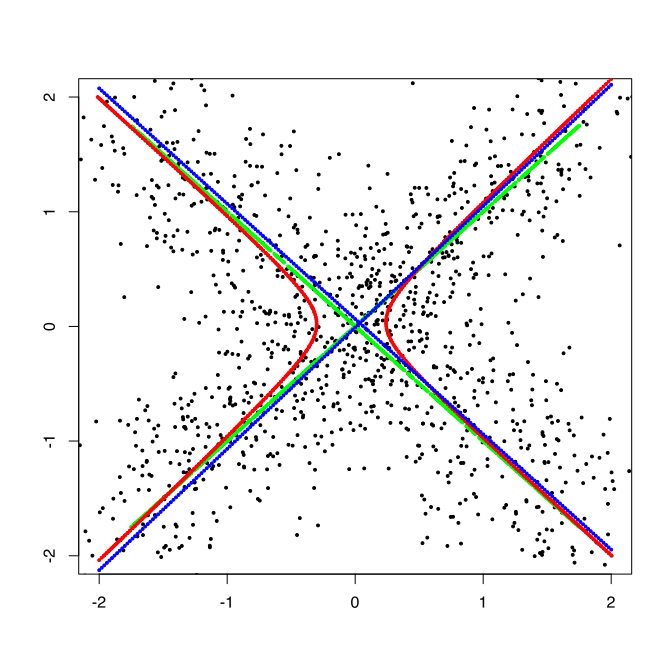}  
\includegraphics[width=0.32\linewidth]{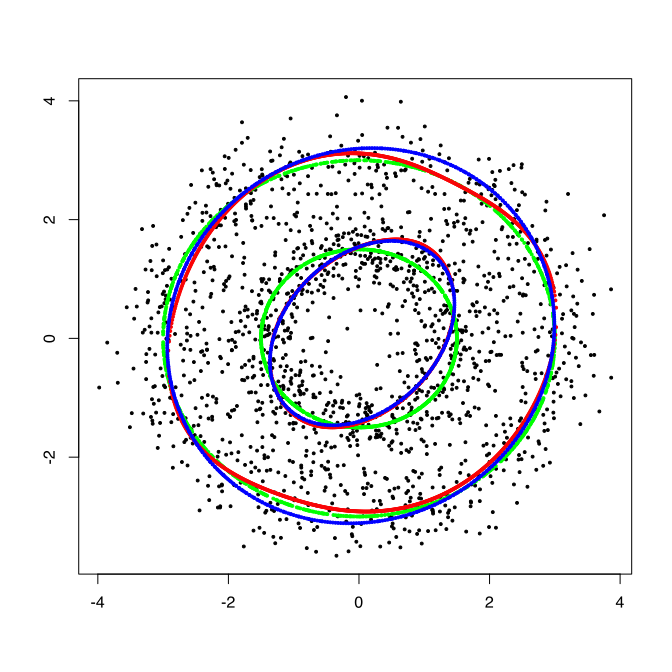}    
\includegraphics[width=0.32\linewidth]{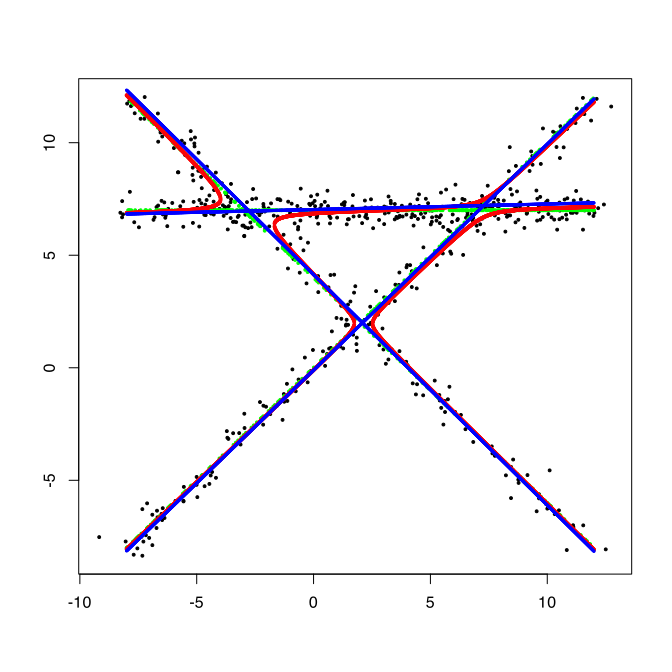}
     \fbox{ 
 \textcolor{green}{ $\bullet$} {\footnotesize $\theta_i$'s}\quad
 \textcolor{black}{$\bullet$} {\footnotesize $X_i$'s}\quad
 \textcolor{red}{\textemdash}{\footnotesize estimated algebraic set }
  \textcolor{blue}{\textemdash}{\footnotesize estimated algebraic set (proj. coeffs.) }}
    \caption{\label{fig:Proj}  In these experiments, the noise is $\sigma=0.4$. We exhibit in red the algebraic set corresponding to the zero locus of the estimated polynomial (via Algorithm~\ref{alg:two}). In dark blue, we show the output of the procedure incorporating the knowledge that the coefficients should factorize. The sample sizes from left to right are 600, 1200 and 600.  }
\end{figure}

More generally, suppose there exists a collection of polynomials $S\subset \R[x_1, \dots, x_d]$ such that $\mathcal{A}$ is the set of common zeros of one or more elements of $S$. Our principal interest lies is the case where every $P \in S$ factors into $m$ components of prescribed degrees $g_1, \dots, g_m$. Let $\tilde{S} \subset \R^\nelem$, where $g = g_1+\ldots+g_m$, denote the set of coefficient vectors (ordered according to the total order $\le_g$) corresponding to the polynomials in $S$. We assume that $\tilde{S}$ is closed in the Euclidean topology on $\R^\nelem$. Since the Euclidean projection 
onto a closed set such as $\tilde{S}$ need not be unique, we fix a {\it measurable selection}
 \[
\Pi_S: \R^\nelem\to \tilde{S}, \qquad \mbox{such that} \qquad u\mapsto \Pi_S(u) \in \arg \min_{v \in \tilde{S}} \| v- u\|.
\]
Our starting point is the output of Algorithm~\ref{alg:two}, and in particular, the set $\mathcal{A}^{(n,g)}$ (see~\eqref{eq:A_ng}). As emphasized in Remark~\ref{Rem:g=g^*}, knowledge of the true value $g^*$ is essential for recovering the algebraic set $\mathcal{A}$; therefore, in this subsection, we assume that $g=g^*$.

Figure~\ref{fig:Proj} illustrates the performance of the structure-aware projection method across three examples: (i) a cross,  (ii) two concentric circles—corresponding to the zero set of a quartic polynomial that factors into two quadratic components, and (iii) three intersecting lines—corresponding to the zero set of a cubic polynomial that factors into three affine components. In each panel of Figure~\ref{fig:Proj}, the estimated algebraic set obtained directly from the zero set of the fitted polynomial using Algorithm~\ref{alg:two} (see Section~\ref{subsection:recoverSet-Method-1}) is shown in red, while the improved estimate—obtained by projecting the coefficients onto the space of polynomials with the known factorization structure—is shown in {blue}.

Recall the estimated eigenspace $\hat{\rm J}_{g^*}$ of the debiased moment matrix $\hat{\mathbb{M}_g}$, as defined in~\eqref{eq:hat-J_g}. Given knowledge of the set $S$, we seek to project the estimated coefficient vectors of the fitted polynomials onto $S$, to obtain the set
$$ \hat{\rm J}_{g^*}^S :=\{\Pi_S(u): \ u\in  \hat{\rm J}_{g^*}, \;  \|u\|\leq 1\}. $$
Our next result, Lemma~\ref{lemma:estimation-cross-ideal}, establishes the convergence of  $\hat{\rm J}_{g^*}^S$  to the closed unit ball of ${\rm J}_{g^*}$, under the following assumption.

\begin{Assumption}\label{Assumption-prior-knloegde}
We assume that the vanishing ideal ${\rm I}(\mathcal{A})$ (see Definition~\ref{def:VanishingIdeal}) is generated by polynomials $P_1, \dots, P_k\in \R_{\leq g^*}[x_1, \dots, x_d]$  and that $\tilde{S}\subset \R^{\kappa_{d,g^*}}$ is closed in the Euclidean topology and contains ${\rm J}_{g^*} $ (recall the definition of ${\rm J}_{g} $ from Section~\ref{sec:Est-Kernel}). 
\end{Assumption}
\begin{Lemma}\label{lemma:estimation-cross-ideal}
Let $\{X_i, \eps_i, \theta_i \}_{i=1}^n$ be as in~\eqref{Model} and $\{r_n\}_{n\in \N}$ be a sequence such that $r_n\to 0$ and $n^{{1}/{2}}r_n \to \infty$. Suppose that Assumptions~\ref{Assumption:CLT} and \ref{Assumption-prior-knloegde} hold. Then, as $n \to \infty$: 
 \begin{enumerate}
     \item any limit point (in probability) of a sequence $u_n\in \hat{\rm J}_{g^*}^S$  lies in $ {\rm J}_{g^*}$; 
     \item for any $u\in  {\rm J}_{g^*} $ with $\|u\|\leq 1$, there exists a sequence $u_n\in \hat{\rm J}_{g^*}^S$ such that $u_n\xrightarrow{\mathbb{P}} u$. 
 \end{enumerate}
\end{Lemma}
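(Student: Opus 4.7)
The plan is to reduce everything to the chordal-distance estimate $\mathrm{d}(\hat{\rm J}_{g^*}, {\rm J}_{g^*}) = \mathcal{O}_{\mathbb{P}}(n^{-1/2})$ supplied by Theorem~\ref{Theorem:main}, combined with the defining optimality of $\Pi_S$ and the key hypothesis ${\rm J}_{g^*}\subset\tilde{S}$ from Assumption~\ref{Assumption-prior-knloegde}. The basic geometric observation I will use repeatedly is that for any $v\in\hat{\rm J}_{g^*}$ with $\|v\|\le 1$, since $\Pi_{\hat{\rm J}_{g^*}}(v)=v$,
$$
\|v-\Pi_{{\rm J}_{g^*}}(v)\|=\|(\Pi_{\hat{\rm J}_{g^*}}-\Pi_{{\rm J}_{g^*}})v\|\le \mathrm{d}(\hat{\rm J}_{g^*},{\rm J}_{g^*})=\mathcal{O}_{\mathbb{P}}(n^{-1/2}),
$$
and the analogous estimate with the roles of the two subspaces interchanged.

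For part~(2), I would fix $u\in{\rm J}_{g^*}$ with $\|u\|\le 1$ and set $v_n:=\Pi_{\hat{\rm J}_{g^*}}(u)\in\hat{\rm J}_{g^*}$. Non-expansiveness of orthogonal projection gives $\|v_n\|\le 1$, and since $\Pi_{{\rm J}_{g^*}}(u)=u$, the observation above yields $\|v_n-u\|=\mathcal{O}_{\mathbb{P}}(n^{-1/2})$. Defining $u_n:=\Pi_S(v_n)\in\hat{\rm J}_{g^*}^S$ and using that $u\in{\rm J}_{g^*}\subset\tilde{S}$ is an admissible competitor, the minimizing property of $\Pi_S$ forces $\|u_n-v_n\|\le\|u-v_n\|$, whence $\|u_n-u\|\le 2\|v_n-u\|\xrightarrow{\mathbb{P}}0$. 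For part~(1), let $\{u_n\}$ be a sequence in $\hat{\rm J}_{g^*}^S$ admitting $u^*$ as a limit point in probability; pass to a subsequence so that $u_n\xrightarrow{\mathbb{P}}u^*$, and write $u_n=\Pi_S(v_n)$ with $v_n\in\hat{\rm J}_{g^*}$, $\|v_n\|\le 1$. Setting $w_n:=\Pi_{{\rm J}_{g^*}}(v_n)\in{\rm J}_{g^*}\subset\tilde{S}$, the displayed estimate gives $\|w_n-v_n\|=\mathcal{O}_{\mathbb{P}}(n^{-1/2})$, and the minimizing property of $\Pi_S$ (now tested against $w_n$) yields $\|u_n-v_n\|\le\|w_n-v_n\|=\mathcal{O}_{\mathbb{P}}(n^{-1/2})$. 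Hence $w_n\xrightarrow{\mathbb{P}}u^*$, and extracting an a.s.-convergent sub-subsequence of $w_n$ together with closedness of the finite-dimensional subspace ${\rm J}_{g^*}\subset\R^{\kappa_{d,g^*}}$ yields $u^*\in{\rm J}_{g^*}$.

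The main obstacle to anticipate is that the selection $\Pi_S$ is merely measurable, not continuous, and $\tilde{S}$ is a general closed set without further structure, so one cannot hope to deduce $\Pi_S(v_n)\to\Pi_S(u)$ from $v_n\to u$ by soft continuity arguments. The inclusion ${\rm J}_{g^*}\subset\tilde{S}$ sidesteps this issue entirely: in both directions it supplies a concrete element of $\tilde{S}$ (namely $u$ in part~(2) and $w_n$ in part~(1)) at which to test the one-sided optimality inequality $\|\Pi_S(v_n)-v_n\|\le\|\cdot-v_n\|$, thereby converting what would otherwise be a continuity statement into a hard inequality that preserves the $\mathcal{O}_{\mathbb{P}}(n^{-1/2})$ rate inherited from Theorem~\ref{Theorem:main}.
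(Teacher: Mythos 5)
Your proposal is correct and follows essentially the same route as the paper: consistency of the estimated kernel (Theorem~\ref{Theorem:main}, which the paper invokes via Corollary~\ref{Coro:ConsistencyPolynomials}) combined with the one-sided optimality inequality for $\Pi_S$ tested against elements of ${\rm J}_{g^*}\subset\tilde{S}$, i.e.\ the paper's inequality \eqref{desigualdad-Gilles}/\eqref{eq:projection-2Lipschitz}. The only differences are cosmetic refinements—taking $v_n=\Pi_{\hat{\rm J}_{g^*}}(u)$ avoids the paper's rescaling step in part~(2), and comparing against $w_n=\Pi_{{\rm J}_{g^*}}(v_n)$ in part~(1) is, if anything, a slightly more careful rendering of the paper's assertion that $v_n$ converges to a point of ${\rm J}_{g^*}$.
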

The lemma above follows from the observation that for any $u\in \tilde{S}$ and $v\in \R^\nelem$, the triangle inequality yields
\begin{equation}
    \label{desigualdad-Gilles}
    \| \Pi_S(v) - u\| \le \| u - v \| + \| v -  \Pi_S(v)\| \le 2\| u - v \|.
\end{equation}
We now define the algebraic set 
$$ \mathcal{A}^{(n,g^*)}_S :=\{\x\in \R^d: P(x)=0 \text{ for all $P$ with coefficients in }    \Pi_S(\hat{\rm J}_{g^*})\}, $$
which will be our estimator of $\mathcal{A}$. Next, we show that under the following assumption, our method achieves asymptotic recovery of 
$\mathcal{A}$ at all regular points of its irreducible components. Note that this set of points (cf.~\eqref{eq:regular-prior} below) is larger than the set of regular points for reducible sets defined in Definition~\ref{defn:regular-pt}. For instance, the origin in the “cross” example (see Example~\ref{ex: 2}) is not a regular point in the sense of Definition~\ref{defn:regular-pt}, but it does satisfy condition~\eqref{eq:regular-prior}.

\begin{Assumption}\label{Assumption-prior-knloegde-set}
Suppose that
\begin{equation}
 S := \left\{ P = Q_1 \cdots Q_m :\; Q_j \in \mathbb{R}_{\leq {g_j}}[x_1, \dots, x_d] \quad \text{for } j = 1, \dots, m \right\},
\end{equation}
for a known sequence of degrees  \(g_1, \dots, g_m \in \mathbb{N}\) and a fixed \(m \in \mathbb{N}\).  
Further, assume that \({\rm I}_{g^*} \subset S\) and that \(\dim({\rm I}_{g^*}) = 1\).
\end{Assumption}
We note that Assumption~\ref{Assumption-prior-knloegde-set} differs from Assumption~\ref{assumption:superStrong}. Specifically, Assumption~\ref{assumption:superStrong} permits 
$\dim({\rm I}_{g^*}) > 1$, while Assumption~\ref{Assumption-prior-knloegde-set} encompasses algebraic sets that are not necessarily irreducible.

\begin{Theorem}\label{Theorem:recoveringSet-Prior}
Let \(\{X_i, \varepsilon_i, \theta_i\}_{i=1}^n\) follow the model in~\eqref{Model} and recall~\eqref{eq:hat-J_g} where \(\{r_n\}_{n\in \mathbb{N}}\) is a sequence such that \(r_n \to 0\) and $n^{1/2} r_n \to \infty$. Suppose that Assumptions~\ref{Assumption:CLT} and~\ref{Assumption-prior-knloegde-set} hold, and that the measurable selection \(\Pi_S\) is homogeneous, i.e., \(\Pi_S(\lambda v)=\lambda\,\Pi_S(v)\) for all \(\lambda>0\).\footnote{Assumption~\ref{Assumption-prior-knloegde-set} implies that \(S\) is a cone: if \(Q \in S\), then \(\lambda Q \in S\) for all \(\lambda >0\). As a result, for any \(\lambda \neq 0\) and \(u \in \mathbb{R}^\nelem\),
$\arg\min_{v \in \tilde{S}} \|v - \lambda u\| = \arg\min_{v \in \tilde{S}} \left\| \frac{1}{\lambda}v - u \right\| = \left\{ \lambda w : w \in \arg\min_{v \in \tilde{S}} \|v - u\| \right\},
$
which justifies the assumption that the measurable selection \(\Pi_S\) satisfies \(\Pi_S(\lambda v) = \lambda \Pi_S(v)\).}
Fix any nonzero polynomial \(P = Q_1 \cdots Q_m \in {\rm I}_{g^*} \setminus \{0\}\). Then, 
for any point
\begin{equation}
    \label{eq:regular-prior}
    x_0 \in \bigcap_{\{j: \, x_0 \in \mathcal{V}(Q_j)\}} \mathrm{reg}\bigl(\mathcal{V}(Q_j)\bigr),
\end{equation}
there exists an open Euclidean neighborhood \(\mathcal{U}\) of \(x_0\) such that
$d_{\mathrm{H}}\bigl(\mathcal{A} \cap \mathcal{U},\; \mathcal{A}^{(n,g^*)}_S \cap \mathcal{U}\bigr) \xrightarrow{\mathbb{P}} 0.$
Moreover, for any \(x_0 \in \operatorname{reg}(\mathcal{A})\), there exists an open Euclidean neighborhood \(\mathcal{U}\) of \(x_0\) such that
\[
d_{\mathrm{H}}(\mathcal{A} \cap \mathcal{U},\; \mathcal{A}^{(n,g)}_S \cap \mathcal{U}) = \mathcal{O}_{\mathbb{P}}(n^{-1/2}).
\]
\end{Theorem}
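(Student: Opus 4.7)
The plan is to exploit the assumption $\dim({\rm I}_{g^*})=1$ from Assumption~\ref{Assumption-prior-knloegde-set} to reduce the estimator to the zero locus of a single random polynomial whose coefficient vector is $n^{-1/2}$-close to that of $P$, and then transfer this convergence to local convergence of zero sets through an implicit-function-theorem argument applied factor by factor.

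First, I would establish a reduction step. Since $k_{g^*}=\dim({\rm I}_{g^*})=1$, Lemma~\ref{Lemma:StableKernel} together with $r_n\to 0$ and $n^{1/2}r_n\to\infty$ implies that, with probability tending to one, exactly one eigenvalue of $\hat{\mathbb{M}}_{g^*}$ falls below $r_n$; in particular, $\hat{\rm J}_{g^*}=\mathbb{R}\hat u$ for a unit eigenvector $\hat u$. Theorem~\ref{Theorem:main} supplies a sign for $\hat u$ such that $\|\hat u-u\|=\mathcal{O}_{\mathbb{P}}(n^{-1/2})$, where $u$ is a fixed unit vector spanning ${\rm J}_{g^*}$ (any scalar multiple of the coefficients of $P$). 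Because ${\rm I}_{g^*}\subset S$ gives $u\in\tilde S$, the triangle bound \eqref{desigualdad-Gilles} yields $\|\Pi_S(\hat u)-u\|=\mathcal{O}_{\mathbb{P}}(n^{-1/2})$. The homogeneity of $\Pi_S$ (together with the symmetry $-P\in S$ obtained by negating any one factor) forces $\Pi_S(\hat{\rm J}_{g^*})=\mathbb{R}\cdot\Pi_S(\hat u)$, so $\mathcal{A}^{(n,g^*)}_S=\mathcal{V}(\hat P)$, where $\hat P\in S$ is the polynomial with coefficient vector $\Pi_S(\hat u)$ and thus converges (up to sign) to a scalar multiple of $P$ at rate $n^{-1/2}$ in coefficient norm.

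Second---and this is the main technical obstacle---I would show that the factorization of $\hat P$ can be selected so that the individual factors converge to those of $P$. Writing $\hat P=\hat Q_1\cdots\hat Q_m$ with $\deg\hat Q_j\le g_j$, the aim is to produce nonzero scalars $c_j^{(n)}$ with $\prod_j c_j^{(n)}=\pm 1$ (so that rescalings are invisible at the level of zero sets) such that $\|c_j^{(n)}\hat Q_j-Q_j\|_{\rm coef}=\mathcal{O}_{\mathbb{P}}(n^{-1/2})$ for every $j$. I would proceed by applying the implicit function theorem to the multiplication map $\mu:(q_1,\dots,q_m)\mapsto q_1\cdots q_m$ after fixing normalizations that kill the scaling orbit $(q_j)\mapsto(c_jq_j)$ with $\prod c_j=1$. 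Generically at non-degenerate factorizations---e.g.~when the $Q_j$ are pairwise coprime and each has exact degree $g_j$---the differential of the normalized $\mu$ at $(Q_1,\dots,Q_m)$ is a linear isomorphism between tangent spaces of matching dimension, so a local smooth inverse lifts the coefficient convergence of $\hat P$ to convergence of factors. Establishing the non-degeneracy of this differential is the most delicate step of the proof.

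Third, I would use the factor-level convergence to obtain the local statements. Fix $x_0$ satisfying \eqref{eq:regular-prior} and set $J=\{j:Q_j(x_0)=0\}$. Choose a compact Euclidean neighborhood $\mathcal{U}$ of $x_0$ on which (i) $\inf_{\mathcal{U}}|Q_j|>0$ for every $j\notin J$, and (ii) for each $j\in J$ the implicit function theorem parametrizes $\mathcal{V}(Q_j)\cap\mathcal{U}$ as a smooth hypersurface. Uniform convergence $\hat Q_j\to Q_j$ on $\mathcal{U}$ (a consequence of coefficient convergence) implies that, for $j\notin J$, $\hat Q_j$ is eventually bounded away from zero on $\mathcal{U}$, so $\mathcal{V}(\hat Q_j)\cap\mathcal{U}=\emptyset$ with high probability, while for $j\in J$ the quantitative implicit function theorem applied to $\hat Q_j$ produces a smooth hypersurface whose Hausdorff distance to $\mathcal{V}(Q_j)\cap\mathcal{U}$ vanishes in probability. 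Taking unions yields $d_{\mathrm{H}}(\mathcal{A}\cap\mathcal{U},\mathcal{A}^{(n,g^*)}_S\cap\mathcal{U})\xrightarrow{\mathbb{P}}0$. Finally, when $x_0\in\operatorname{reg}(\mathcal{A})$, the point $x_0$ lies on a single irreducible component of $\mathcal{A}$, so locally only one factor, say $Q_{j_0}$, vanishes at $x_0$ with $\nabla Q_{j_0}(x_0)\neq 0$; the same quantitative implicit function theorem, now with the $\mathcal{O}_{\mathbb{P}}(n^{-1/2})$ perturbation of that single factor, upgrades the convergence to the parametric rate $d_{\mathrm{H}}(\mathcal{A}\cap\mathcal{U},\mathcal{A}^{(n,g^*)}_S\cap\mathcal{U})=\mathcal{O}_{\mathbb{P}}(n^{-1/2})$, paralleling the closing argument in the proof of Theorem~\ref{Theorem:recoveringSet}.
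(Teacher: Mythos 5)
Your step 1 (reduction to a single estimated polynomial $\hat P=\Pi_S(\hat u)$ whose coefficient vector is $\mathcal{O}_{\mathbb{P}}(n^{-1/2})$-close to a multiple of $P$, using $\dim({\rm I}_{g^*})=1$ and the $2$-Lipschitz bound \eqref{desigualdad-Gilles}) matches the paper. The genuine gap is your step 2: you propose to lift coefficient convergence of the product to factor-level convergence $\|c_j^{(n)}\hat Q_j-Q_j\|=\mathcal{O}_{\mathbb{P}}(n^{-1/2})$ by inverting the multiplication map $(q_1,\dots,q_m)\mapsto q_1\cdots q_m$ via the implicit function theorem. This requires the non-degeneracy you yourself flag (pairwise coprime factors of exact degrees $g_j$), but Assumption~\ref{Assumption-prior-knloegde-set} and the theorem statement impose no such condition: $P$ may have repeated or non-coprime factors, factors of degree strictly less than $g_j$, etc., and in those cases the (normalized) multiplication map is not a local diffeomorphism, so no Lipschitz local section exists and the factorization of $\hat P$ need not converge to the chosen factorization $(Q_1,\dots,Q_m)$ at all, let alone at rate $n^{-1/2}$. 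Indeed the paper's Remark~\ref{rem:No-rate} explicitly observes that factor-level rates are unavailable (one can have $P_n=Q_nR_n\to QR$ with $Q_n$ converging much more slowly than $R_n$), which is precisely why Theorem~\ref{Theorem:recoveringSet-Prior} asserts no rate at points that are only regular in the sense of \eqref{eq:regular-prior}. Since both your consistency argument (which needs every non-vanishing factor $\hat Q_j$, $j\notin J$, to stay bounded away from zero on $\mathcal{U}$, and every vanishing factor to converge to the corresponding $Q_j$) and your rate argument are built on step 2, the proof as written does not go through in the stated generality.

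The paper circumvents this in two ways. For consistency it never matches factors of $\hat P$ to the given $Q_j$: writing $\hat P=\lambda_n Q_1^{(n)}\cdots Q_m^{(n)}$ with unit-norm factors, it uses closedness of $\tilde S$ and compactness to extract a subsequence along which $(\lambda_n,Q_1^{(n)},\dots,Q_m^{(n)})$ converges to \emph{some} factorization $\lambda P_1\cdots P_m$ of $P$, proves one Hausdorff inclusion by a pure compactness/continuity contradiction (no factors needed), and proves the other by applying the implicit function theorem to each limiting factor that vanishes at $x_0$, obtaining only qualitative convergence of the graphs $\phi_k(Q_k^{(n)},\cdot)\to\phi_k(P_k,\cdot)$. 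For the parametric rate at $x_0\in\operatorname{reg}(\mathcal{A})$ it bypasses factors entirely: it works with the one-dimensional projected space $\Pi_S(\hat{\rm J}_{g^*})$, whose distance to ${\rm J}_{g^*}$ is $\mathcal{O}_{\mathbb{P}}(n^{-1/2})$, and reruns the implicit-function argument of Theorem~\ref{Theorem:recoveringSet} with Lemma~\ref{lemma:Local-ring-same-degree} applied to the product polynomial (whose gradient is nonzero at a regular point). Your final rate claim could be repaired the same way—apply the quantitative IFT directly to $\hat P$ versus $P$ rather than to the single factor $Q_{j_0}$—but the factor-recovery step 2 should be abandoned, not fixed.
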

The above result shows the consistency of this procedure at regular points of the irreducible components, and provides a parametric rate at regular points of \(\mathcal A\). Let us interpret Theorem~\ref{Theorem:recoveringSet-Prior} in the context of the “cross” from Example~\ref{ex: 2}. In this case, we take $S = \left\{ Q_1 Q_2 \in \mathbb{R}_{\leq 2}[x, y] :\; \deg(Q_1) = \deg(Q_2) = 1 \right\},$
so that \(g^* = 2\) and
${\rm I}_2 = \left\{ \lambda(x + y)(x - y) : \lambda \in \mathbb{R} \right\}.$
Clearly, Assumption~\ref{Assumption-prior-knloegde-set} is satisfied. Moreover, one can verify that every point on \(\mathcal{A}\) satisfies the condition in~\eqref{eq:regular-prior}. In particular, the origin \((0, 0)\)—which lies at the intersection of the two lines—satisfies the condition even though it is not a regular point in the sense of Definition~\ref{defn:regular-pt} (cf.~Figure~\ref{fig:Proj}).

\begin{Remark}\label{rem:No-rate} 
The convergence rates around non-regular points satisfying  \eqref{eq:regular-prior}
are difficult to derive as one can have a polynomial $P_n=Q_nR_n$ converging  to $P=QR$ for which  $Q_n$ converges slower than $R_n$. Hence, the proofs, which are based on the implicit function theorem, are not easily adaptable to each of the reducible components of $\mathcal{A}$.  %
\end{Remark}

\section{Conclusion and outlook}  
\label{sec: conclusion}

In this paper, we introduced a statistically principled method for estimating the underlying structure of noisy data under the assumption that the signal lies on an algebraic set. By suitably debiasing the moment matrix derived from the Vandermonde matrix, we proposed an estimator for the algebraic set and established its consistency, convergence rates, and a central limit theorem. These results place our approach on firm theoretical footing and open several promising directions for future research.

Our current theoretical results assume a fixed ambient dimension \(d\) as the sample size \(n\) grows. A natural question is how the method behaves when the number of monomials \(\nelem = \binom{g+d}{d}\) grows with \(n\), for instance in the regime \(\nelem \asymp n\). Is it possible to develop a natural \emph{shrinkage estimator} for the empirical moment (Vandermonde) matrix in such settings? What regularization strategies might preserve algebraic structure while ensuring statistical stability?

Another open question is whether one can simultaneously estimate the \emph{noise covariance} $\Sigma$ and the algebraic set when the degree \(g\) is unknown. This would require combining algebraic structure estimation with model selection techniques, potentially leading to more adaptive procedures. Can such methods be made computationally efficient?

A known challenge in eigen decomposition-based methods is that, by definition as a minimization problem, especially in the presence of noise or when the spectrum is nearly degenerate, the eigenvectors appear unstable in practice. Can one devise \emph{robust alternatives} to extract the approximate kernel of the moment matrix? Recent work, such as that of \cite{oliveira2024improved}, offers refined perturbation bounds for eigenspaces. Can such results be adapted or extended to our setting to provide robust guarantees?

While Theorems~\ref{Theorem:recoveringSet} and~\ref{Theorem:completeRecovery} provide explicit local rates of convergence for the estimated algebraic set, Theorem~\ref{Theorem:recoveringSet-Prior}---which leverages prior structural information---does not currently yield any such quantitative convergence guarantees for \(\mathcal{A}^{(n,g^*)}_S\); see Remark~\ref{rem:No-rate}. Establishing \emph{local rates of convergence} in this setting remains an open problem. 

An alternative approach to solving the  problem considered in this paper may involve \emph{direct optimization over the latent points}. Specifically, one may consider the problem $
\min_{\theta_1,\dots,\theta_n} \sum_{i=1}^n \| X_i - \theta_i \|^2,$
subject to the constraint that the latent points \(\theta_1,\dots,\theta_n\) lie on a common algebraic set. Such an approach has been explored, for instance, in \cite{himmelmann2020generalized}, and although computationally more involved, it has the key advantage of being \emph{agnostic to the noise distribution}---unlike our current method, which relies on moment debiasing and thus implicitly assumes knowledge of the noise structure. This naturally raises the question: \emph{Can we design a hybrid approach that combines the statistical efficiency and computational simplicity of moment-based methods with the robustness and flexibility of projection-based formulations?} In particular, is it possible to develop a statistically consistent and computationally tractable estimator of the underlying algebraic set that does not require explicit modeling or estimation of the noise distribution?

More broadly, investigating \emph{minimax rates} under various structural and noise assumptions, as well as providing \emph{algorithmic guarantees} for constrained projection methods (e.g., convergence of alternating minimization or gradient descent schemes on non-convex algebraic varieties), represent valuable directions for future research.

\bibliographystyle{imsart-number}
\bibliography{Biblio}

\newpage

\appendix
\section{Numerical experiments}\label{sec: AddSimus}

In this section, we illustrate the performance of the proposed methods through numerical experiments, focusing on how they behave as we vary the sample size \(n\) and the noise level \(\sigma\). We consider three representative models:  
(i) a cross,  
(ii) the intersection of three lines, and  
(iii) two concentric circles.

\subsection{Increasing the sample size}

Figure~\ref{fig:Tube} shows how the estimated semi-algebraic set (described in Section~\ref{Section:tubes}) evolves with increasing sample size for the cross example in Example~\ref{ex: 2}. Here, we set \(\lambda_n \propto n^{-1/2} \log(n)\). As expected, the size of the estimated semi-algebraic set contracts and converges toward the true set as the sample size $n$ increases.

\begin{figure}[h!]
    \centering
    \includegraphics[width=0.42\linewidth]{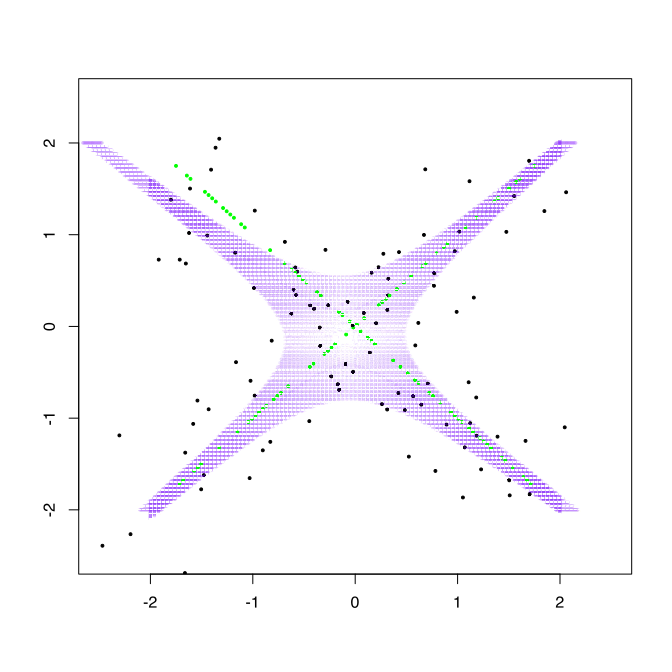}    
    \includegraphics[width=0.42\linewidth]{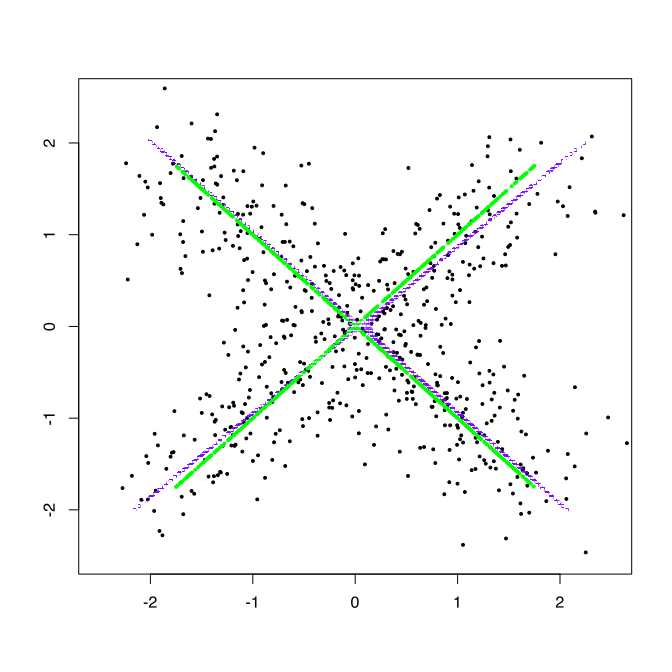}
    \fbox{ 
        \textcolor{green}{$\bullet$} {\footnotesize latent points \(\theta_i\)}\quad
        \textcolor{black}{$\bullet$} {\footnotesize observed points \(X_i\)}\quad
        \textcolor{purpleR}{\textemdash} {\footnotesize estimated semi-algebraic set} 
    }
    \caption{\label{fig:Tube}Estimated semi-algebraic sets (in purple) for two sample sizes \(n \in \{100, 600\}\) with \(\lambda_n \propto n^{-1/2} \log(n)\) and \(\sigma = 0.4\). The size of the semi-algebraic set shrinks as \(n\) increases.}
\end{figure}

\subsection{The impact of noise}
Figure~\ref{fig:Noise} demonstrates the impact of increasing noise levels on the recovery of an algebraic set defined by two concentric circles. Despite significant noise, the method from Section~\ref{subsection:recoverSet-Method-1} remains highly effective in capturing the true structure.

\begin{figure}[h!]
    \centering
    \includegraphics[width=0.42\linewidth]{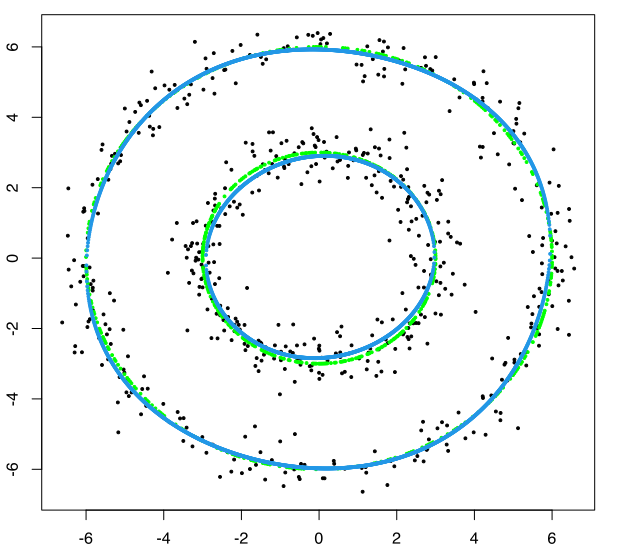}    
    \includegraphics[width=0.42\linewidth]{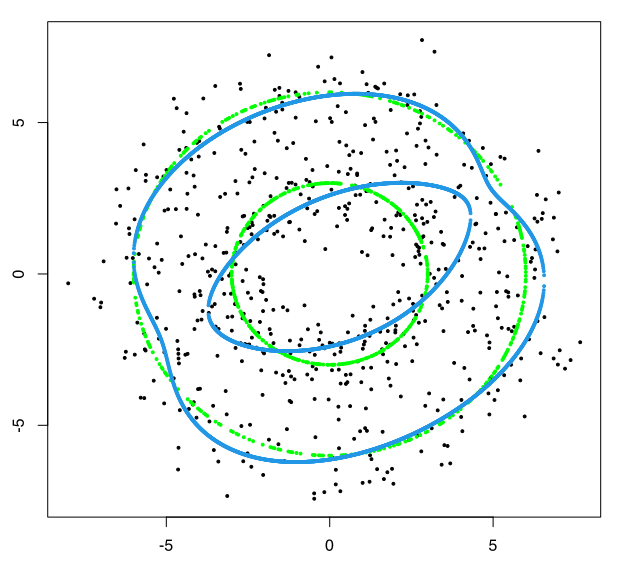}
    \fbox{ 
        \textcolor{green}{$\bullet$} {\footnotesize latent points \(\theta_i\)}\quad
        \textcolor{black}{$\bullet$} {\footnotesize observed points \(X_i\)}\quad
        \textcolor{blueR}{\textemdash} {\footnotesize estimated algebraic set} 
    }
    \caption{\label{fig:Noise}Estimated algebraic sets (in blue) obtained using Algorithm~\ref{alg:two} (see Section~\ref{subsection:recoverSet-Method-1}) as the noise level increases from \(\sigma = 0.4\) (left) to \(\sigma = 0.8\) (right). Here, \(n = 600\) and green dots represent the latent points.}
\end{figure}

\section{Recovery of  ${\rm I}_g$, the  subspace of vanishing polynomials of degree $g$}\label{SubSec:EstimationVanasingPoly}
Note that in Theorem~\ref{Theorem:main} we have already established the consistent recovery of ${\rm J}_{g}$, the limit of the kernel of $\mathbb{M}_g(\nu_n)$. Observe now that the set ${\rm I}_{g}$ of polynomials of degree $g$ vanishing at the latent points $\{\theta_i\}_{i\in \N}$ precisely have their vector of coefficients (following the monomial order $\leq_g$) that are elements of ${\rm J}_{g}$ (cf.~\eqref{eq:J_g} and \eqref{eq:Ig}).   

A natural next step is to show the convergence of the polynomials indexed by vectors in $\hat {\rm J}_g$  in a suitable topology. To study the convergence of polynomials, we need the notion of uniform convergence on compact sets. Recall that a sequence of continuous functions $\{f_n\}_{n\in \N}$  converges to a continuous function $f$ uniformly on compact sets if for every compact set $K\subset \R^d $, 
$ \sup_{\x\in K}|f_n(\x)-f(\x)|\to 0$, as $n \to \infty.$
In such a case we say that $f_n\to f$ in $\mathcal{C}_{c}(\R^d)$. 

The following result  states that each polynomial of ${\rm I}_g$ (see~\eqref{eq:Ig}) can be approximated  by elements of $\hat{\rm I}_g$ (see~\eqref{eq:hat-Ig}); moreover, each limit point of elements of $\hat{\rm I}_g$ belongs to ${\rm I}_g$. This readily follows from the fact that the convergence of a sequence of polynomials on compact sets is equivalent to the convergence of the coefficients.

\begin{Corollary}\label{Coro:ConsistencyPolynomials}
    Under the setting of Theorem~\ref{Theorem:main} the following holds: 
    \begin{enumerate}
        \item For any {$P\in {\rm I}_g$}
        there exists a sequence $\{P^{(n)}\}_{n\in \N}  $ with $ P^{(n)} \in \hat{{\rm I}}_g $, such that 
        $  P^{(n)} \xrightarrow{\mathbb{P}} P $ in $\mathcal{C}_{c}(\R^d)$.
       \item  If  $\{P^{(n)}\}_{n\in \N}  $ is a sequence  with $ P^{(n)} \in \hat{{\rm I}}_g $, such that 
        $  P^{(n)} \xrightarrow{\mathbb{P}} P $ in $\mathcal{C}_{c}(\R^d)$, then {$P\in {\rm I}_g$}. 
    \end{enumerate}
\end{Corollary}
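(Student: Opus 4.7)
\textbf{Proof plan for Corollary \ref{Coro:ConsistencyPolynomials}.} The strategy is to translate both statements into claims about the coefficient vectors in $\R^{\nelem}$ and then invoke Theorem~\ref{Theorem:main}. The space $\R_{\leq g}[x_1,\dots,x_d]$ is finite-dimensional, so uniform convergence on any fixed compact set with nonempty interior is equivalent to the convergence of coefficient vectors (in, e.g., the Euclidean norm). Moreover, by Definition~\ref{defn:Subspace-dist}, the quantity controlled in Theorem~\ref{Theorem:main} dominates the operator norm of the difference of orthogonal projectors:
\[
\|\Pi_{\hat{{\rm J}}_g}-\Pi_{{\rm J}_g}\|_{\rm op}\;\le\;\|\Pi_{\hat{{\rm J}}_g}-\Pi_{{\rm J}_g}\|_{{\rm Fr}}\;=\;{\rm d}(\hat{{\rm J}}_g,{\rm J}_g)\;=\;\mathcal{O}_{\P}(n^{-1/2}).
\]

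For the first assertion, let $P\in {\rm I}_g$ have coefficient vector $u\in {\rm J}_g$, and define $u^{(n)}:=\Pi_{\hat{{\rm J}}_g}u$, which, by construction, is the coefficient vector of some $P^{(n)}\in\hat{{\rm I}}_g$. Since $\Pi_{{\rm J}_g}u=u$,
\[
\|u^{(n)}-u\|\;=\;\|\Pi_{\hat{{\rm J}}_g}u-\Pi_{{\rm J}_g}u\|\;\le\;\|\Pi_{\hat{{\rm J}}_g}-\Pi_{{\rm J}_g}\|_{\rm op}\,\|u\|\;=\;\mathcal{O}_{\P}(n^{-1/2}),
\]
so $u^{(n)}\xrightarrow{\P}u$ and therefore $P^{(n)}\xrightarrow{\P}P$ in $\mathcal{C}_c(\R^d)$.

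For the second assertion, let $P^{(n)}\in\hat{{\rm I}}_g$ with $P^{(n)}\xrightarrow{\P}P$ in $\mathcal{C}_c(\R^d)$, and let $u^{(n)}$, $u$ be the corresponding coefficient vectors; by the equivalence of topologies, $u^{(n)}\xrightarrow{\P}u$, so in particular $\{\|u^{(n)}\|\}_{n\in\N}$ is tight. Since $u^{(n)}\in\hat{{\rm J}}_g$ gives $\Pi_{\hat{{\rm J}}_g}u^{(n)}=u^{(n)}$, the triangle inequality yields
\[
\|u-\Pi_{{\rm J}_g}u\|\;\le\;\|u-u^{(n)}\|+\|\Pi_{\hat{{\rm J}}_g}u^{(n)}-\Pi_{{\rm J}_g}u^{(n)}\|+\|\Pi_{{\rm J}_g}(u^{(n)}-u)\|.
\]
The first and third terms vanish in probability by the convergence $u^{(n)}\xrightarrow{\P}u$ and the continuity of $\Pi_{{\rm J}_g}$; the middle term is bounded by $\|\Pi_{\hat{{\rm J}}_g}-\Pi_{{\rm J}_g}\|_{\rm op}\|u^{(n)}\|$, which is $\mathcal{O}_{\P}(n^{-1/2})$ by Theorem~\ref{Theorem:main} combined with the tightness of $\{\|u^{(n)}\|\}$. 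Hence $\|u-\Pi_{{\rm J}_g}u\|=0$, so $u\in{\rm J}_g$ and $P\in{\rm I}_g$.

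There is no substantive obstacle here beyond bookkeeping: the only care needed is to pass from the Frobenius-projection convergence of subspaces to operator-norm convergence (immediate) and from in-probability convergence of coefficient vectors to in-probability uniform convergence on compacta (immediate in finite dimension). The crux of the result is already supplied by Theorem~\ref{Theorem:main}.
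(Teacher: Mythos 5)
Your proof is correct and takes essentially the same route as the paper: for part 1 you project the coefficient vector onto $\hat{{\rm J}}_g$ (the paper's $\Pi_{\hat{{\rm I}}_g}P$) and invoke Theorem~\ref{Theorem:main} together with the equivalence, in the finite-dimensional space $\R_{\leq g}[x_1,\dots,x_d]$, between coefficient convergence and uniform convergence on compacta. Your explicit triangle-inequality argument for part 2 merely spells out what the paper compresses into ``holds by the same means,'' so there is nothing to add.
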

The above corollary implies that for $g$ larger or equal to the minimum degree $ g^*$, we are able to recover the ideal ${\rm I}_\infty$ of polynomials  vanishing on $\{\theta_i\}_{i\in \N}$ (cf.~Definition~\ref{def:minDegree}). This is formalized in the following corollary.

\begin{Corollary}
\label{cor: limPoly}
   Fix $g\geq g^*$. Under the setting of Theorem~\ref{Theorem:main}, for any {$P\in {\rm I}_\infty$}
        there exists a sequence $\{P^{(n)}\}_{n\in \N}  $ with $ P^{(n)} \in \langle\,\hat{{\rm I}}_g \,\rangle $, such that 
        $  P^{(n)} \xrightarrow{\mathbb{P}} P $ in $\mathcal{C}_{c}(\R^d)$.
\end{Corollary}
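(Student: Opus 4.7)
The plan is to reduce the claim to the finite-degree consistency already established in Corollary~\ref{Coro:ConsistencyPolynomials}. Since $g\geq g^*$, Definition~\ref{def:minDegree} yields ${\rm I}_\infty=\langle\, {\rm I}_g\,\rangle$, so any $P\in {\rm I}_\infty$ admits a decomposition
\[
P \;=\; \sum_{j=1}^{m} Q_j P_j, \qquad P_j\in {\rm I}_g,\ Q_j\in \R[x_1,\dots,x_d],
\]
for some finite $m\in\N$ (the existence of such a representation being exactly the definition of membership in the ideal generated by ${\rm I}_g$). The idea is to replace each generator $P_j$ by a consistent proxy $P_j^{(n)}\in \hat{{\rm I}}_g$ provided by Corollary~\ref{Coro:ConsistencyPolynomials}(1), and then set
\[
P^{(n)} \;:=\; \sum_{j=1}^{m} Q_j P_j^{(n)}.
\]
By construction, each $Q_j P_j^{(n)}$ is a polynomial multiple of an element of $\hat{{\rm I}}_g$, hence lies in $\langle\, \hat{{\rm I}}_g\,\rangle$; since an ideal is closed under finite sums, $P^{(n)}\in \langle\, \hat{{\rm I}}_g\,\rangle$ as required.

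It remains to verify that $P^{(n)}\xrightarrow{\mathbb{P}} P$ in $\mathcal{C}_c(\R^d)$. Fix an arbitrary compact set $K\subset \R^d$. Each $Q_j$ is continuous, so $C_{K,j}:=\sup_{x\in K}|Q_j(x)|<\infty$, and the triangle inequality gives
\[
\sup_{x\in K}\bigl|P^{(n)}(x)-P(x)\bigr| \;\leq\; \sum_{j=1}^{m} C_{K,j}\,\sup_{x\in K}\bigl|P_j^{(n)}(x)-P_j(x)\bigr|.
\]
By Corollary~\ref{Coro:ConsistencyPolynomials}(1), each $\sup_{x\in K}|P_j^{(n)}(x)-P_j(x)|$ tends to $0$ in probability, and since the sum on the right is over a fixed finite index set, the left-hand side also converges to $0$ in probability. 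As $K$ was arbitrary, this establishes $P^{(n)}\xrightarrow{\mathbb{P}} P$ in $\mathcal{C}_c(\R^d)$.

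The only delicate point one could worry about is the joint (measurable) construction of the approximating sequences for the different generators $P_1,\dots,P_m$, but this is not really an obstacle: the proxies $P_j^{(n)}$ are built by projecting the coefficient vectors of $P_j$ onto the common random subspace $\hat{\rm J}_g$ defined in~\eqref{eq:hat-J_g}, so all of them live on the same probability space and are constructed from the same sample, and no additional coupling or measurable selection argument is needed. Consequently, the proof reduces to the one-line observation that a finite $\R[x_1,\dots,x_d]$-linear combination of consistent approximations is itself consistent, which is the content of the two paragraphs above.
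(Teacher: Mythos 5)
Your proof is correct and follows essentially the same route as the paper: write $P=\sum_j Q_j P_j$ with $P_j\in {\rm I}_g$ (using ${\rm I}_\infty=\langle\,{\rm I}_g\,\rangle$ for $g\ge g^*$), approximate each generator via Corollary~\ref{Coro:ConsistencyPolynomials}, and conclude by bounding the $Q_j$ on compact sets. The paper states this more tersely (citing the finite-combination property of ideals and invoking Corollary~\ref{Coro:ConsistencyPolynomials}), and your filled-in details, including the observation that all proxies come from projecting onto the same random subspace $\hat{\rm J}_g$, are consistent with its argument.
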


\section{Proofs}
\label{sec: Proofs}
\subsection{Proofs of Section~\ref{Sec:Algebra}}\label{Appendix:ProofSect3}
\begin{proof}[Proof of Lemma~\ref{Lemma:StableIdeal}]
Note that as ${\rm I}_g^{(n+1)}\subset {\rm I}_g^{(n)} $ for all $n\in \N$, the chain of  subspaces $\{{\rm I}_g^{(n)}\}_{n\in \N}$ is descending. It is easy to check that 
$$ {\rm I}_g= \bigcap_{n\in \N} {\rm I}_g^{(n)},$$
by recalling that $P\in {\rm I}_g$ if and only if $P\in \R_{\leq g}[x_1, \dots, x_d]$ and $P(\theta_i)=0$ for all $i\in \N$.
We now use the fact that every finite-dimensional vector space $H$ is Artinian, meaning that for any sequence $\{H_n\}_{n\in \N}$ of subspaces of $H$  such that $H_{n+1}\subset H_n$ there exists $n_0$ such that $H_n=H_{n+1} $ for all $n\geq n_0$ (called descending chain condition; cf.,~\cite[Corollary~10.16.]{Anderson.1992.Modules}). Therefore,  there exists $n_g$ such that ${\rm I}_g^{(n)}={\rm I}_g^{(n+1)}$ for all $n\geq n_g$. Thus, the first claim follows.

If $g\geq g^*$, by definition we have ${\rm I}_\infty=\langle \, {\rm I}_g \, \rangle$ and $\mathcal{V}( {\rm I}_g )=\mathcal{A}$, which completes the proof.  
\end{proof}

\begin{proof}[Proof of Lemma~\ref{lem: isomorp}]
The proof is constructive and the construction is given in the following result. 
\begin{Lemma}
\label{lem: isomorpLongAppendix}
We define the following equivalence relation over $\{0, \dots, {d}\}^g$: ${\bf j}=(j_1, \dots, j_g)\sim {\bf j'}=(j_1', \dots, j_g')$ if and only if  there exists a permutation $ \tau\in [[g]]$ such that $ (j_{\tau(1)}, \dots, j_{\tau(g)})=(j_1', \dots, j_g') $. Denote the equivalence class of ${\bf j}$ by $[{\bf j}]$. Let ${e}_{l} := (0,\ldots, 0, 1, 0,\ldots,0) \in \R^{d+1}$ be the $(l+1)$'th canonical vector, for $l=0,\ldots, d$. Then the following hold:  
    \begin{enumerate}
        \item  The map 
$$\mathcal{U}_g:\quotient{\{0, \dots, {d}\}^g}{\sim}\ni [{\bf j}]\mapsto \frac{1}{g! }\sum_{{\bf l}\in [{\bf j}]}\otimes_g({ e}_{l_1}, \dots, { e}_{l_g}) \in {\rm sym}((\R^{d+1})^{\otimes_g})   $$
is well-defined and  a basis of 
$ {\rm sym}((\R^{d+1})^{\otimes_g})$ is given by 
$ \{ \mathcal{U}_g([{\bf j}]): \ {\bf j} \in \{0, \dots, {d}\}^g \} . $

\item Recall the definition of $\mathcal{B}_{d,g}$ from~\eqref{eq:B-gN}. For each ${\bf i}\in \mathcal{B}_{d,g}$ there exists a unique $ [{\bf j}{(\bf i)}]\in \quotient{\{0, \dots, {d}\}^g}{\sim}$ such that ${\bf i}=\sum_{k=0}^{d} e_{{\bf j}{({\bf i})}_k}$.

\item 
The linear map $\gamma_g: {\rm sym}((\R^{d+1})^{\otimes_g})  \longrightarrow  \R^\nelem$ defined by
\begin{align*}
\gamma_g:     \sum_{{\bf i}\in \mathcal{B}_{d,g} } a_{\bf i} \mathcal{U}_g([{\bf j}{({\bf i}) }]) &\longmapsto  (a_{\bf i})_{{\bf i}\in \mathcal{B}_{d,g}}
\end{align*}
is bijective and satisfies  \eqref{gammag}.
 \end{enumerate}
\end{Lemma}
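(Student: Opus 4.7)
\textbf{Proof plan for Lemma~\ref{lem: isomorpLongAppendix}.} The three parts are essentially bookkeeping on symmetric tensors: define the natural basis indexed by equivalence classes, identify this index set with $\mathcal{B}_{d,g}$, and then check that $\gamma_g$ is the change-of-coordinates map that turns $\tilde x^{\otimes_g}$ into the list of monomials $\phi(x)$.

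\textbf{Part 1 (the family $\{\mathcal{U}_g([{\bf j}])\}$ is a basis).} The plan is to argue in three short steps. First, well-definedness: the expression defining $\mathcal{U}_g([{\bf j}])$ sums over the entire equivalence class, so it does not depend on the chosen representative. Second, the image lies in $\sym((\R^{d+1})^{\otimes_g})$: a permutation of the tensor factors permutes $[{\bf j}]$ among itself, hence fixes the sum. Third, linear independence and spanning: each standard tensor basis element $\otimes_g(e_{j_1},\dots,e_{j_g})$ belongs to exactly one equivalence class, so different $\mathcal{U}_g([{\bf j}])$ are supported on disjoint subsets of the basis of $(\R^{d+1})^{\otimes_g}$ and are therefore linearly independent; and the image of the symmetrization map is generated by $\{\sym(\otimes_g(e_{j_1},\dots,e_{j_g}))\}$, which is $\{\mathcal{U}_g([{\bf j}])\}$ up to a nonzero multinomial factor. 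A dimension count, $\#\bigl(\quotient{\{0,\dots,d\}^g}{\sim}\bigr)=\binom{d+g}{d}=\nelem=\dim\sym((\R^{d+1})^{\otimes_g})$, closes the argument.

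\textbf{Part 2 (bijection between $\mathcal{B}_{d,g}$ and $\quotient{\{0,\dots,d\}^g}{\sim}$).} For ${\bf i}=(i_0,\dots,i_d)\in\mathcal{B}_{d,g}$, take
\[
{\bf j}({\bf i}):=(\underbrace{0,\dots,0}_{i_0},\underbrace{1,\dots,1}_{i_1},\dots,\underbrace{d,\dots,d}_{i_d})\in\{0,\dots,d\}^g;
\]
then $\sum_{k=1}^{g} e_{{\bf j}({\bf i})_k}={\bf i}$ componentwise. Any other ${\bf j}'$ with the same property has the same multiset of entries as ${\bf j}({\bf i})$, hence lies in the same equivalence class, proving uniqueness of $[{\bf j}({\bf i})]$. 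Conversely, from any class $[{\bf j}]$ one reads off ${\bf i}=(i_0,\dots,i_d)$ with $i_k=\#\{r:j_r=k\}\in\mathcal{B}_{d,g}$, so the assignment is bijective.

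\textbf{Part 3 (bijectivity of $\gamma_g$ and the identity $\phi(x)=\gamma_g\,\tilde x^{\otimes_g}$).} By Parts~1 and~2, $\{\mathcal{U}_g([{\bf j}({\bf i})])\}_{{\bf i}\in\mathcal{B}_{d,g}}$ is a basis of $\sym((\R^{d+1})^{\otimes_g})$; the linear map $\gamma_g$ sends it to the standard basis of $\R^{\nelem}$, hence it is a linear bijection. For the second statement, the plan is to expand
\[
\tilde x^{\otimes_g}=\sum_{{\bf j}\in\{0,\dots,d\}^g}\tilde x_{j_1}\cdots\tilde x_{j_g}\,\otimes_g(e_{j_1},\dots,e_{j_g}),
\]
group the terms according to the equivalence class of ${\bf j}$, and observe that within a class $[{\bf j}({\bf i})]$ every term carries the same scalar weight $\tilde x^{{\bf i}}$. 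Collecting these weights and comparing with the definition of $\mathcal{U}_g([{\bf j}({\bf i})])$ rewrites $\tilde x^{\otimes_g}$ as a linear combination of the basis elements whose coefficients are (a combinatorial multiple of) $\tilde x^{{\bf i}}$; applying $\gamma_g$ then reads off $\phi(x)=(\tilde x^{{\bf i}})_{{\bf i}\in\mathcal{B}_{d,g}}$.

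\textbf{Main obstacle.} Everything above is formal once one commits to a convention; the only delicate point is the combinatorial normalization in Part~3. Depending on whether the sum $\sum_{{\bf l}\in[{\bf j}]}$ in the definition of $\mathcal{U}_g$ ranges over distinct class representatives or over all permutations with multiplicity, a multinomial factor $\binom{g}{i_0,\dots,i_d}$ (equivalently $g!/(i_0!\cdots i_d!)$) appears in the expansion of $\tilde x^{\otimes_g}$. The proof must track this factor carefully so that the coefficient of $\mathcal{U}_g([{\bf j}({\bf i})])$ in the expansion of $\tilde x^{\otimes_g}$ matches the prescription of $\gamma_g$ and yields exactly $\tilde x^{{\bf i}}$, producing \eqref{gammag} as stated.
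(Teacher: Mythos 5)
Your plan follows essentially the same route as the paper's proof: Part~1 via the symmetrized standard basis of $(\R^{d+1})^{\otimes_g}$ together with the dimension count $\dim \sym((\R^{d+1})^{\otimes_g})=\binom{d+g}{g}=\bigl|\{0,\dots,d\}^g/\sim\bigr|$ (your disjoint-support independence argument is a slightly more self-contained substitute for the paper's citation of the dimension formula), Part~2 via the explicit representative ${\bf j}({\bf i})$ with $i_l$ copies of each index $l$ and a multiplicity-counting uniqueness argument, and Part~3's bijectivity because $\gamma_g$ maps a basis to a basis of spaces of equal dimension $\nelem$. You are in fact more explicit than the paper about the identity \eqref{gammag}: the paper's proof stops at bijectivity and never expands $\tilde{x}^{\otimes_g}$.

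The one point you flag but leave open, the combinatorial normalization, is worth settling, and it is a one-line computation. Expanding multilinearly, $\tilde{x}^{\otimes_g}=\sum_{{\bf i}\in\mathcal{B}_{d,g}}\tilde{x}^{\bf i}\sum_{{\bf l}\in[{\bf j}({\bf i})]}\otimes_g(e_{l_1},\dots,e_{l_g})$, where the inner sum runs over the $g!/(i_0!\cdots i_d!)$ distinct members of the class. With $\mathcal{U}_g$ as literally written (prefactor $1/g!$, sum over distinct class members) this reads $\tilde{x}^{\otimes_g}=\sum_{{\bf i}} g!\,\tilde{x}^{\bf i}\,\mathcal{U}_g([{\bf j}({\bf i})])$, so the $\gamma_g$ of the lemma returns $g!\,\phi(x)$ rather than $\phi(x)$; if instead one reads the sum as being over all $g!$ permutations with multiplicity, i.e.\ $\mathcal{U}_g([{\bf j}])=\sym(\otimes_g(e_{j_1},\dots,e_{j_g}))$, the coefficient becomes the multinomial $\binom{g}{i_0,\dots,i_d}\tilde{x}^{\bf i}$, again not exactly $\phi(x)$. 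The convention that makes \eqref{gammag} hold on the nose is $\mathcal{U}_g([{\bf j}]):=\sum_{{\bf l}\in[{\bf j}]}\otimes_g(e_{l_1},\dots,e_{l_g})$ with no prefactor (equivalently, absorb the constant $g!$ into $\gamma_g$); this rescaling of basis vectors changes nothing in Parts~1 and~2 nor in the bijectivity of $\gamma_g$, so your argument closes once the convention is fixed. This is a slip in the lemma's stated normalization rather than in your plan---the paper's own proof never checks \eqref{gammag}---but a complete write-up should make the choice explicit and carry it through.
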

\begin{proof}
    A basis 
    of $(\R^{d+1})^{\otimes_g}$ is given by $$B=\{\text{ $\otimes_g({ e}_{j_1}, \dots, { e}_{j_g})$ such that $(j_1, \dots, j_g)\in \{0, \dots, {d}\}^g$} \}.$$
    Therefore, 
 ${\rm sym}((\R^{d+1})^{\otimes_g})$ is spanned by 
 $$
 \left\{\frac{1}{g! }\sum_{{\bf j}\in [{\bf j}]}\otimes_g({ e}_{j_1}, \dots, { e}_{j_g}) : {\bf j} \in \{0, \dots, {d}\}^g \right\}. 
 $$
 We observe that  the dimension of  ${\rm sym}((\R^{d+1})^{\otimes_g})$ is $\kappa_{d,g}=\binom{d+g}{g}$ (cf.\ Proposition~2.4 in \cite{ComonetAl.SIMAA.2008}), which agrees with the number of elements on the space $ \{0, \dots, {d}\}^g/\sim$. Therefore, the first  claim follows.  For each ${\bf i}=(i_{0}, \dots, i_{d})\in \mathcal{B}_{d,g}$, the representation 
 $$
 {\bf i}=\sum_{j=0}^d i_j { e}_{j} 
 $$
 is unique. Then we can find the vector 
 $$ {\bf j}({{\bf i}})=( \underbrace{0, \dots, 0}_{i_0 \ {\rm times}}, \underbrace{1, \dots, 1}_{i_1 \ {\rm times}}, \dots, \underbrace{d, \dots, d}_{i_d \ {\rm times}})\in \{0, \dots, {d}\}^g, $$ 
 which satisfies $ {\bf i}=\sum_{k=0}^{d} { e}_{j{({\bf i})}_k} . $ Any element of $[{\bf j}{({\bf i})}]$ satisfies the same property. If  ${\bf j}'\notin [{\bf j}({{\bf i}})]$, then there exists $s\in \{0, \dots, d\}$ such that the number  of indexes $k\in \{ 1, \dots, g\}$ such that $s=j_k'$ differs from the  number of indexes $k\in \{ 1, \dots, g\}$ such that $s=j_k({\bf i})$. Therefore, the representations $ \sum_{k=0}^{d} { e}_{j'_k}  $ and $ \sum_{k=0}^{d} { e}_{j{({\bf i})}_k}  $  are different and the second claim follows. 
 
 The last claim follows easily from the fact that  $\gamma_g$ maps elements of a basis to elements of a basis and both spaces have the same dimension $\kappa_{d,g}$. 
\end{proof}
\end{proof}
\begin{proof}[Proof of Lemma~\ref{lem: RepSq}]
It is well-known (see e.g., \cite[p.~7]{Ryan.2002}) that if
 $U,V,W,X$ are  vector spaces, $F: V\to W$ and $G: U\to X$ are linear maps, then there exists a unique linear map  $ F\otimes G\to W\otimes X $ such that
 \begin{equation}
     \label{eq:propertiesotimes}
     (F \, v) \otimes (G\, u)= (F\otimes G)(v\otimes u ) \quad \text{for all }\  (u,v)\in U\times V.
 \end{equation}
By \eqref{ExampleTranspose} there exists a  linear isomorphism $$h_{\nelem}:\R^\nelem\otimes \R^\nelem \to \mathcal{M}_{\nelem\times \nelem}(\R)$$ such that  $  { u}\, { u}^\top =  h_{\nelem}({ u}\otimes { u})$ for all $u\in \R^\nelem$. Using \eqref{gammag} we get
   $$
   \phi(x)\left(\phi(x)\right)^\top =  h_{\nelem}(\phi(x)\otimes \phi(x)) = h_{\nelem}(\gamma_g(\tilde{\x}^{\otimes_g})\otimes \gamma_g(\tilde{\x}^{\otimes_g})). 
   $$
The relation \eqref{eq:propertiesotimes} implies 
   $$ \phi(x)\left(\phi(x)\right)^\top = h_{\nelem}((\gamma_g\otimes \gamma_g) (\tilde{\x}^{\otimes_g}\otimes \tilde{\x}^{\otimes_g}))= h_{\nelem}((\gamma_g\otimes \gamma_g) (\tilde{\x}^{\otimes_{2g}})).
   $$
   Therefore the result holds for any counting probability measure $\mu_n =\frac{1}{n}\sum_{i=1}^n \delta_{x_i} $, i.e., 
   $$ \frac{1}{n}\sum_{i=1}^n  \phi(x_i)\left(\phi(x_i)\right)^\top = \frac{1}{n}\sum_{i=1}^n h_{\nelem}((\gamma_g\otimes \gamma_g) (\tilde{\x_i}^{\otimes_{2g}}))= h_{\nelem}\left((\gamma_g\otimes \gamma_g) \left(\frac{1}{n}\sum_{i=1}^n\tilde{\x}_i^{\otimes_{2g}}\right)\right). $$
   We now conclude  by taking a sequence of counting measures $\mu_n$ converging to $\mu$ in the $2g$-Wasserstein distance (see \cite{villani2003topics}).  
\end{proof}

\begin{proof}[Proof of Theorem~\ref{theorem:representation}]
Theorem~4.1 in        \cite{pereira2022tensor} states that if $Y\sim \mathcal{N}( {\bf \mu}, \Sigma)$, for $\mu \in \R^d$ and $\Sigma \in \R^{d \times d}$ is a positive definite matrix and $g \in \N$,  then, 
\[
\E \left[Y^{\otimes g}\right] = \sum_{k=0}^{\lfloor g/2\rfloor} C_{g,k} \sym \left( \mu^{\otimes(g-2k)} \otimes \Sigma^{\otimes k} \right).
\]
Using this  together with Lemma~\ref{lem: RepSq} we obtain the first claim. The second one follows from  \cite[Theorem 5.1]{pereira2022tensor} combined with the linearity of the isomorphism.
\end{proof}

\begin{proof}[Proof of Corollary~\ref{prop: EasyFormula}]
In the particular case where the noise covariance is of the form $\sigma^2 I$ (for $\sigma >0$
 remark that the moment matrix requires computing quantities of the form 
\[
\frac1n \sum_{i=1}^n \prod_{j=1}^d \theta_{i,j}^{{\bf{a}}(j)}, 
\]
where ${\bf{a}}(j)$ is a shorthand notational for the corresponding exponents. 
 Under the assumption of diagonal noise, we remark that 
 \[
 \E \left[\frac1n \sum_{i=1}^n \prod_{j=1}^d f_j(X_{i,j})\right] =   \frac1n \sum_{i=1}^n \prod_{j=1}^d \E \left[ f_j(X_{i,j})\right],
 \]
for any polynomial functions $f_j$'s. 
In the particular case when $d=2$, the unbiased estimator of $n^{-1} \sum_{i=1}^n \theta_{i,1}^K \theta_{i,2}^L$, for $K,L \in \mathbb{N}$, is 
\[
\sum_{j=0}^{\lfloor K/2 \rfloor }\sum_{\lambda=0}^{\lfloor L/2 \rfloor } C_{K,j}C_{L,\lambda} (-1)^{j+\lambda} \sigma^{2j+2\lambda} \frac1n \sum_{i=1}^n X_{i,1}^{K-2j}  X_{i,2}^{L-2\lambda},
\]
where we recall the definition
\[
C_{K,j}:={K \choose 2j} \frac{(2j)!}{j!2^j}.
\]
This can be checked thanks to the equality, for $X\sim \mathcal{N}(m, \sigma^2)$, and $K\in \N$,
\[
\E \left[ X^K \right] = \sum_{j=0}^{\lfloor K/2 \rfloor} {K \choose 2j} \frac{(2j)!}{j!2^j} m^{K-2j} \sigma^{2j},
\]
combined with the proof of Theorem~5.1 in~\cite{pereira2022tensor}.
\end{proof}


\subsection{Proofs of Section~\ref{sec: Cons+CLT}} \label{Appendix:ProofSect4}

\begin{proof}[Proof of Theorem~\ref{Theorem:CLTMatrix}]
We start with the proof of the central limit theorem, relying on decomposition \eqref{eq: MomDec}. As each matrix in the sum depends on a different $\theta_i$, we will recourse to the Lindeberg-Feller central limit theorem~\cite[Proposition~2.27]{vdv98}. For a triangular array of independent random vectors $Y_{n,1}, \ldots,Y_{n,n}$, the two conditions 
\begin{align*}
&\frac1{n} \sum_{i=1}^ n\E \left[\|Y_{n,i}\|^2 \mathds{1}_{\{\|Y_{n,i}\| > \eps n^{1/2}\}}\right] \to 0, \text{every } \eps>0;\\
&\frac1n\sum_{i=1}^n \Cov(Y_{n,i}) \to \mathfrak{S}', 
\end{align*}
for some matrix $\mathfrak{S}'$ suffice to establish that $\frac1{\sqrt{n}}\sum_{i=1}^n\left(    Y_{n,i} - \E[Y_{n,i}]\right)\xrightarrow{\ w\ } \mathcal{N}(0, \mathfrak{S}')$. The only condition to establish is that point 2.\ of Assumption~\ref{Assumption:CLT} implies the first condition in the display above, setting $Y_{n,i}:=M_{n,i}$.
Looking at the terms 
\[
\|M_{n,i}\|_{\rm Fr}^2 
\mathds{1}_{\big\{\|M_{n,i}\|_{\rm Fr}^2 > \eps^2 \big\}},
\] one sees, recalling the definition of $M_{n,i}$ above and seeing that $C_{2g,k}(-1)^k$ as well as $\Sigma$ are bounded and $ h_{\nelem}(\gamma_g\otimes \gamma_g)$ is a fixed linear map, that the size of $\|M_{n,i}\|_{\rm Fr}^2$ is driven by that of
$\|{X}_i\|^{4g}= \|\theta_i+\eps_i\|^{4g}$. In the light of this and as $\eps_i$ is Gaussian,  it is clear that the assumption on the $\theta_i$'s guarantees the first condition of the Lindeberg-Feller theorem.

To prove the almost sure convergence, we remark that proving it for each entry of the matrix suffices.  Under our assumptions, the statement then follows directly from the result of \cite{kolmogorov1930loi} discussed in \cite[Notes to Section~8.3]{dudley2018real}. 
\end{proof}

\begin{proof}[Proof of Proposition~\ref{Prop:ConsistencyEigenValues}]
The first claim (resp.\ the second claim) follows from an application of Theorem~\ref{Theorem:CLTMatrix}-(i)  (resp.\ Theorem~\ref{Theorem:CLTMatrix}-(ii)) coupled with Weyl’s inequalities and the fact that spectral norm of $\hat{\mathbb{M}}_g- \mathbb{M}_g(\nu_n)$ is upper bounded by its Frobenius norm.
\end{proof}

\begin{proof}[Proof of Lemma~\ref{Lemma:StableKernel}]
We claim that  for every $g\in \N$ there exists $n_g\in \N$ and $\Delta_g>0$ such that, for every $n\geq n_g$ 
\begin{equation}
    \label{claim:lemmaStableKernel}\inf_{k>{ k_g}}\lambda_k( {\mathbb{M}}_g(\nu_n)) \geq \Delta_g.
\end{equation}
Note that from the claim the result follows  by Proposition~\ref{Prop:ConsistencyEigenValues} and the stabilization of the kernel ${\rm J}_g$ of ${\mathbb{M}}_g(\nu_n)$ (Lemma~\ref{Lemma:StableIdeal}).

We show the claim {\it ad absurdum}. Assume that there exists a sequence  $\{v_n\}_{n}$ of norm one vectors with $v_n=(a_{\bf i}^{(n)})_{{\bf i}\in \mathcal{B}_{d,g} }$ such that $v_n$ is orthogonal to  
$ {\rm J}_g $ and ${\mathbb{M}}_g(\nu_n)v_n =\lambda_n v_n$ with $\lambda_n\to 0$. Then, along a subsequence, we might assume that $v_n\to v$ with $v$ being norm one and orthogonal to $ {\rm J}_g $.  Let $P_n$ and $P$ be the polynomials with entries $v_n$ and $v$ in the order $\leq_g$, respectively. From here we derive that   
$ P(\theta_i)=0$ for all $i\in \N$, which  implies that $P\in {\rm I}_g$. Equivalently, this implies that  $v\in {\rm J}_g$ which contradicts the fact that $v$ has norm one and is orthogonal to $ {\rm J}_g $. Hence \eqref{claim:lemmaStableKernel} follows. 
\end{proof}

\begin{proof}[Proof of Theorem~\ref{Theorem:main}]
Recall that $k_g = {\rm dim}({\rm I}_g)$. We claim that with probability one, there exists  some (probably random)  $n_0$ such that 
    $$k_g= k^*_{n}:= {\rm dim}\left( \bigoplus_{j\leq j_{n}} E_j( \hat{\mathbb{M}}_g)\right) \quad \forall \; n \geq n_0. $$
Note that if the claim holds,  then 
Theorem~2 in \cite{YuDavisKahan}  implies that 
\begin{equation}
    \label{eq:YuDavisKahan}
    {\rm d}\left( \bigoplus_{j\leq j_{n}} E_j( \hat{\mathbb{M}}_g), \bigoplus_{j\leq j_{n}} E_j( \mathbb{M}_g(\nu_n)) \right) \leq  \frac{2 \, \nelem \| \hat{\mathbb{M}}_g-\mathbb{M}_g(\nu_n)\|_{{\rm Fr}}}{{\Delta}_n},
\end{equation}
where 
${\Delta}_n := \inf_{j>k_g}  \lambda_i\left( \mathbb{M}_g(\nu_n)\right).$ By  Lemma~\ref{Lemma:StableKernel} there exist $ \Delta>0$ and $n_0\in \N$ such that 
$  {\Delta}_n > \Delta $ and $\bigoplus_{j\leq j_{n}} E_j( \mathbb{M}_g(\nu_n))={\rm I}_g$ for all $n\geq n_0$. Hence, the result follows by Theorem~\ref{Theorem:CLTMatrix}. 

Now we prove the claim and finish the proof. We argue by contradiction and assume that one the events 
$$ \mathcal{E}_1= ( \text{there exists $\{n_k\}$ such that $k^*_{n_k}\leq k_g-1$ })$$
or 
$$ \mathcal{E}_2= ( \text{there exists $\{n_k\}$ such that $k^*_{n_k}\geq k_g+1$ })$$
has positive probability. If $\mathcal{E}_2$ has positive probability, then $\lambda_{k_g+1}( \hat{\mathbb{M}}_g) \leq r_{n_{k}}  $ with positive probability and we contradict \eqref{eq:BoundBelowLambdas}. 
 If $\mathcal{E}_1$ is not negligible, then $\lambda_{k_g}( \hat{\mathbb{M}}_g) \geq  r_{n_{k}}$ holds with positive probability, which implies that 
 $$ n_k\E[ (\lambda_{k_g}( \hat{\mathbb{M}}_g))^2 ]\geq  n_k\E[ (\lambda_{k_g}( \hat{\mathbb{M}}_g))^2 \mathds{1}_{\mathcal{E}_1} ] \geq \mathbb{P}(\mathcal{E}_1) n_k r_{n_k}^2 \to \infty,  $$
 contradicting \eqref{eq:convergenceLambdaKernel}. 
\end{proof}

\begin{proof}[Proof of Corollary~\ref{Coro:ConsistencyPolynomials}]
    Fix $P\in {\rm I}_g$. Then  $P=\Pi_{ {\rm I}_g}(P)$. By Theorem~\ref{Theorem:main}, the sequence $\{P^{(n)}\}_{n\in \N}$, with $P_n=\Pi_{ \hat{\rm I}_g}(P)$ converges in probability to $P$ as elements of the linear space $\R_{\leq g}[x_1, \dots, x_d]$, meaning that the components $(a_1^{(n)}, \dots, a_{\nelem}^{(n)})$ of $P^{(n)}$ in any basis of monomials converge in probability to the components  $(a_1, \dots, a_{\nelem})$ of $P$ (in the same basis of monomials). Since this topology over $\R_{\leq g}[x_1, \dots, x_d]$ is equivalent to that of  $\mathcal{C}_K(\R^d)$, we get the first point. The second one holds by the same means.
\end{proof}
\begin{proof}[Proof of Corollary~\ref{Coro:ConsistencyPolynomials}]
     We recall an important property of ideals of the commutative and unital ring $\R[x_1, \dots, x_d]$. If $I\subset \R[x_1, \dots, x_d]$ is an ideal generated by $A\subset \R[x_1, \dots, x_d]$, then (see \cite[Exercise~2.2.1.]{Trifkovi2013})  for every $P\in I$ there exist two finite sequences $Q_1, \dots, Q_k $ and $P_1, \dots, P_k$  of polynomials such that $P_i\in A$, for all $i=1, \dots, k$, and 
$ P=\sum_{i=1}^k Q_i P_i. $
This property and Corollary~\ref{Coro:ConsistencyPolynomials} yield the result.
\end{proof}

\subsection{Proofs of Section~\ref{subsection:recoverSet-Method-1}} \label{Appendix:ProofSect5}
We denote by $$ \tau:\R_{\leq g}[x_1, \dots, x_d]\to \mathcal{B}_{d,g}$$ the linear isomorphism mapping a polynomial $P \in \R_{\leq g}[x_1, \dots, x_d]$ to the vector of its entries in the monomial order $\leq_{g}$. In the following we identify any polynomial in $\R_{\leq g}[x_1, \dots, x_d]$ with its corresponding vector of coefficients (in the monomial order $\leq_{g}$) and use both notions interchangeably.  Note that $\Pi_S$ denotes the orthogonal projection onto the subspace $S$. We write as 
\begin{equation}
    \label{Simplification-tau}
    \Pi_{\hat{\rm I}_g}P=\tau^{-1}\Pi_{\hat{\rm J}_g} \tau P,\quad \Pi_{{\rm I}_g}P=\tau^{-1}\Pi_{{\rm J}_g} \tau P \quad {\rm and}\quad \|P\|= \|\tau P\|
\end{equation}
 to simplify notation.

In the proof, we use 
$N$ to denote the number of generators of 
$\mathcal{A}$, rather than $k$; the symbol $k$ is reserved as a dummy index for subsequences.

Let us first  recall the standard definition of  Painlevé--Kuratowski convergence (see \cite{RockafellarWets}).
\begin{Definition}[Painlevé-Kuratowski]
\label{def:PK}
A sequence of subsets $\{S_k\}_{k\in \N}$ of $\R^d$ converges to $S\subset \R^d $ in the Painlevé-Kuratowski sense if
$$ \limsup S_k :=\{ \x\in \R^d: \ \exists \,\{\x_{k_\eta}\}_{\eta\in \N} \  {\rm s.t.}\  \|\x_{k_\eta}-\x\|\to 0\ {\rm with}\  \x_{k_\eta}\in S_{k_\eta} \  \forall \eta\in \N\}$$
and 
$$ \liminf S_k :=\{ \x\in \R^d: \ \exists \,\{\x_{k}\}_{k\in \N} \  {\rm s.t.}\  \|\x_{k}-\x\|\to 0\ {\rm with}\  \x_{k}\in S_{k} \  \forall k\in \N\} $$ 
agree with $S$.
\end{Definition}

We now state a technical lemma that will be pivotal for handling reducible algebraic sets. It extends Proposition 3.3.10 of~\cite{Bochnak.1998.Book} by giving explicit control on the degrees of the auxiliary polynomials $R_1, \dots , R_{D}\in {I}(\mathcal{A})$ chosen from the vanishing ideal 
$I(\mathcal{A})$. (Recall that 
$I(\mathcal{A})$ is the set of all real polynomials that vanish on  $\mathcal{A}$; see Definition \ref{def:VanishingIdeal}.)

\begin{Lemma}[Local description with bounded degree]
\label{lemma:Local-ring-same-degree}
Let $\mathcal{A}=\mathcal{V}(P_1,\dots,P_N)\subset\R^{d}$ be an algebraic set generated by
polynomials $P_j\in\R_{\le g}[x_1,\dots,x_d]$, for $j=1,\ldots,N$.
Fix a regular point $x_0\in\operatorname{reg}(\mathcal{A})$, and let
$\mathcal{A}_{i^\star}$ be the unique irreducible component of $\mathcal{A}$ that contains $x_0$.
Denote by $D=\operatorname{codim}\bigl(\mathcal{A}_{i^\star}\bigr)$ its (real) codimension. Then there exist a Euclidean neighborhood $\mathcal{U}$ of $x_0$, and polynomials $R_1,\dots,R_{D}\in I(\mathcal{A})\cap\R_{\le g}[x_1,\dots,x_d]$ such that
\begin{equation}\label{eq:R_i}
\bigl\{\nabla R_i(x_0)\bigr\}_{i=1}^{D}\;\text{ are linearly independent,}\qquad
\mathcal{A}\cap\mathcal{U}
=\mathcal{A}_{i^\star}\cap\mathcal{U}
=\mathcal{V}(R_1,\dots,R_{D})\cap\mathcal{U}.
\end{equation}

Moreover, after shrinking $\mathcal{U}$ if necessary, there exist an open set $\mathcal{V}\subset\bigl(\operatorname{span}\{\nabla R_i(x_0)\}_{i=1}^{D}\bigr)^{\!\perp}$ and a $\mathcal{C}^1$ map $\phi:\mathcal{V}\to\operatorname{span}\{\nabla R_i(x_0)\}_{i=1}^{D}$
such that
\begin{equation}
    \label{generate-Local-Ps}
\mathcal{A}\cap\mathcal{U}\;=\;\bigl\{\,u+\phi(u)\;:\;u\in\mathcal{V}\bigr\}.
\end{equation}
\end{Lemma}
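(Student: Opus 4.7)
The plan is to construct the polynomials $R_{1},\dots,R_{D}$ explicitly inside $I(\mathcal{A})\cap\R_{\le g}$, apply the implicit function theorem to the map $F:=(R_{1},\dots,R_{D})$, and then upgrade the resulting graph description to a genuine set equality. As a first step I would use that $x_{0}\in\operatorname{reg}(\mathcal{A})$ to isolate the unique irreducible component containing $x_{0}$: by Definition~\ref{defn:regular-pt}, $x_{0}$ belongs to a single component $\mathcal{A}_{i^{\star}}$, and the finitely many remaining components of $\mathcal{A}$ are closed in $\R^{d}$ and avoid $x_{0}$. Hence shrinking the neighborhood once already delivers the first equality $\mathcal{A}\cap\mathcal{U}=\mathcal{A}_{i^{\star}}\cap\mathcal{U}$ in~\eqref{eq:R_i}.

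The heart of the proof is the construction of $R_{1},\dots,R_{D}$. For every $Q\in I(\mathcal{A})$, the function $Q$ vanishes on the smooth $(d-D)$-dimensional manifold $\mathcal{A}_{i^{\star}}$ near $x_{0}$, so $\nabla Q(x_{0})$ is orthogonal to $T_{x_{0}}\mathcal{A}_{i^{\star}}$ and therefore lies in the $D$-dimensional normal space $N_{x_{0}}$. The linear map $\Phi\colon I(\mathcal{A})\cap\R_{\le g}\to N_{x_{0}}$, $\Phi(Q):=\nabla Q(x_{0})$, is thus well defined, and I plan to show it is surjective; taking preimages of a basis of $N_{x_{0}}$ then produces $R_{1},\dots,R_{D}$ with linearly independent gradients. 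Existence of $D$ polynomials in $I(\mathcal{A}_{i^{\star}})$ with independent gradients at $x_{0}$ (without a degree bound) is the classical local description of a regular point, e.g.\ Proposition~3.3.10 of~\cite{Bochnak.1998.Book}. The hard part, and the main obstacle, is upgrading that statement from "unspecified degree" to "degree at most $g$", and this is precisely where the hypothesis $\mathcal{A}=\mathcal{V}(P_{1},\dots,P_{N})$ with $\deg P_{j}\le g$ is used: since the defining system of $\mathcal{A}$ already lives in degree $\le g$, the space $I(\mathcal{A})\cap\R_{\le g}$ contains the $P_{j}$'s together with enough additional elements (supplied, when some $\nabla P_{j}(x_{0})$ degenerates as in the toy case $P=y^{2}$, by standard localisation at the smooth point $x_{0}$) to saturate $N_{x_{0}}$ under $\Phi$.

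Once $R_{1},\dots,R_{D}$ are secured, the map $F=(R_{1},\dots,R_{D})\colon\R^{d}\to\R^{D}$ satisfies $F(x_{0})=0$ and $DF(x_{0})$ has rank $D$ by construction. The implicit function theorem, after a further shrinkage of $\mathcal{U}$, produces an open set $\mathcal{V}\subset\bigl(\operatorname{span}\{\nabla R_{i}(x_{0})\}_{i=1}^{D}\bigr)^{\perp}$ and a $\mathcal{C}^{1}$ map $\phi\colon\mathcal{V}\to\operatorname{span}\{\nabla R_{i}(x_{0})\}_{i=1}^{D}$ such that $\mathcal{V}(R_{1},\dots,R_{D})\cap\mathcal{U}=\{u+\phi(u):u\in\mathcal{V}\}$, which is exactly~\eqref{generate-Local-Ps} and identifies that zero locus as a smooth manifold of dimension $d-D$. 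To conclude the second equality in~\eqref{eq:R_i}, the inclusion $\mathcal{A}\cap\mathcal{U}\subseteq\mathcal{V}(R_{1},\dots,R_{D})\cap\mathcal{U}$ is immediate from $R_{i}\in I(\mathcal{A})$, while the reverse inclusion follows because $\mathcal{A}_{i^{\star}}\cap\mathcal{U}$ is itself a smooth $(d-D)$-manifold through $x_{0}$ sitting inside another smooth $(d-D)$-manifold, so by invariance of domain the smaller one is open in the larger near $x_{0}$; one final shrinkage of $\mathcal{U}$ to the connected component of $x_{0}$ yields both parts of the lemma.
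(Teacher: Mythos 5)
Your overall skeleton (isolate the irreducible component through $x_0$, produce $D$ polynomials of degree at most $g$ in $I(\mathcal{A})$ with independent gradients at $x_0$, apply the implicit function theorem, then upgrade the trivial inclusion $\mathcal{A}\cap\mathcal{U}\subseteq\mathcal{V}(R_1,\dots,R_D)\cap\mathcal{U}$ to an equality) matches the paper's, but the central step---the degree control---is precisely the one you leave unproved. You correctly identify that the whole content of the lemma is the surjectivity of the map $\Phi\colon I(\mathcal{A})\cap\R_{\le g}\ni Q\mapsto\nabla Q(x_0)\in N_{x_0}$, yet your justification (``enough additional elements supplied by standard localisation at the smooth point'') is not an argument: localisation produces elements of the local ring, not polynomials of degree $\le g$, and nothing you write explains why $I(\mathcal{A})\cap\R_{\le g}$ should cover all of the normal space $N_{x_0}$. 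Note also that this cannot be extracted from the set-theoretic hypothesis $\mathcal{A}=\mathcal{V}(P_1,\dots,P_N)$ alone; what is needed (and what is in force where the lemma is applied, cf.\ Assumption~\ref{Assumption-prior-knloegde}) is that $P_1,\dots,P_N$ generate the vanishing ideal $I(\mathcal{A})$.

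The paper closes this gap with a short product-rule argument that is absent from your proposal: take the local generators $\widetilde R_1,\dots,\widetilde R_D\in I(\mathcal{A})$ of unrestricted degree furnished by \cite[Proposition~3.3.10]{Bochnak.1998.Book}, write $\widetilde R_i=\sum_{j}S_{i,j}P_j$ using ideal generation, and differentiate at $x_0$; since $P_j(x_0)=0$, this gives $\nabla\widetilde R_i(x_0)=\sum_j S_{i,j}(x_0)\,\nabla P_j(x_0)$, so the $D$-dimensional span of the $\nabla\widetilde R_i(x_0)$ is contained in $\operatorname{span}\{\nabla P_j(x_0)\}$. Consequently $D$ of the original generators $P_{j_1},\dots,P_{j_D}$ already have linearly independent gradients at $x_0$ and can be taken as $R_1,\dots,R_D$---no new degree-$\le g$ polynomials need to be manufactured at all. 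Until you supply this (or an equivalent) argument, your proof has a genuine gap at its key step. Your final invariance-of-domain/connectedness argument for the reverse inclusion is workable (the paper instead observes that both sets are graphs over the same base produced by the implicit function theorem and concludes the two graph maps coincide), but you should also record that $\mathcal{A}\cap\mathcal{U}$ is relatively closed in $\mathcal{V}(R_1,\dots,R_D)\cap\mathcal{U}$, so that the open-and-closed argument on a connected component actually yields equality.
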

\begin{proof} $ $\\
\emph{Step 1: A local generating set without degree control.}
Proposition 3.3.10 of \cite{Bochnak.1998.Book} yields polynomials
$\widetilde R_1,\dots,\widetilde R_{D}\in I(\mathcal{A})$
and a neighbourhood $\widetilde{\mathcal{U}}$ of $x_0$ such that the gradients $\{\nabla\widetilde R_i(x_0)\}_{i=1}^{D}$ are linearly independent, and $\mathcal{A}\cap\widetilde{\mathcal{U}}
   =\mathcal{V}(\widetilde R_1,\dots,\widetilde R_{D})\cap\widetilde{\mathcal{U}}$.  Hence, by the implicit function theorem, after shrinking $\widetilde{\mathcal{U}}$ if necessary, there exist an open set $\widetilde{\mathcal{V}}\subset\bigl(\operatorname{span}\{\nabla \widetilde {R}_i(x_0)\}_{i=1}^{D}\bigr)^{\!\perp}$ and a $\mathcal{C}^1$ map $\phi:\widetilde{\mathcal{V}}\to\operatorname{span}\{\nabla \widetilde R_i(x_0)\}_{i=1}^{D}$
such that
\[
\mathcal{A}\cap\widetilde{\mathcal{U}}\;=\;\bigl\{\,u+\widetilde {\phi}(u)\;:\;u\in\widetilde{\mathcal{V}}\bigr\}.
\]
However, $\deg\widetilde R_i$ may exceed $g$.

\medskip
\noindent
\emph{Step 2: Replacing the $\widetilde R_i$ by degree-$\le g$ generators.}
Because $P_1,\dots,P_N$ generate $I(\mathcal{A})$, we can write
\(
  \widetilde R_i=\sum_{j=1}^{N}S_{i,j}\,P_j
\)
with $S_{i,j}\in\R[x_1,\dots,x_d]$.
Taking gradients at $x_0$ gives
\[
  \nabla\widetilde R_i(x_0)=\sum_{j=1}^{N}S_{i,j}(x_0)\,\nabla P_j(x_0).
\]
Since the set $\{\nabla\widetilde R_i(x_0)\}_{i=1}^{D}$ is linearly independent,
we can choose indices $1\le j_1<\cdots<j_D\le N$ such that
\(
  \{\nabla P_{j_\ell}(x_0)\}_{\ell=1}^{D}
\)
are already linearly independent. Relabel these generators as
\(
  R_\ell:=P_{j_\ell},\ \ell=1,\dots,D.
\)
Each $R_\ell$ lies in $I(\mathcal{A})$ and satisfies $\deg R_\ell\le g$ by assumption on the~$P_j$. Moreover, it also follows that 
$$\mathbb{V}:=\operatorname{span}\{\nabla \widetilde R_i(x_0)\}_{i=1}^{D} = \operatorname{span}\{\nabla R_i(x_0)\}_{i=1}^{D}.$$

\medskip
\noindent
\emph{Step 3: Local equality of zero sets.}  We write $x_0=v_0+v_0^\perp$, where     $v_0\in \mathbb{V}$ and $v_0^\perp\in \mathbb{V}^\perp$ and fix an orthonormal basis $\{e_i\}_{i=1}^D$  of $\mathbb{V}$. 
The implicit-function theorem
applied to the map
\[
\Gamma:\mathbb{V}\times \mathbb{V}^\perp \to \mathbb{V},\quad
(v,w)\mapsto  \sum_{i=1}^D R_i(v+w)  e_i,
\]
yields the existence a neighbourhood $\mathcal{U}'$ of $x_0$ and a neighborhood $\mathcal{V}'$ of $v_0^\perp$ in which
\[
\mathcal{V}(R_1,\dots,R_D)\cap\mathcal{U}'
  =\;\bigl\{\,u+ {\phi}(u)\;:\;u\in {\mathcal{V}}'\bigr\}
\]
for some $\mathcal{C}^1$ map $\phi$. Call $\mathcal{U}= \mathcal{U}'\cap \widetilde{\mathcal{U}} $ and $\mathcal{V}= \mathcal{V}'\cap \widetilde{\mathcal{V}} $. 
Because each $R_\ell$ vanishes on $\mathcal{A}$, the inclusion
$$\mathcal{A}\cap \mathcal{U} \subset\mathcal{V}(R_1,\dots,R_D)\cap\mathcal{U}  $$ 
is immediate. Therefore, by Step~1, we get 
\begin{align*}
   \;\bigl\{\,u+ \widetilde{\phi}(u)\;:\;u\in {\mathcal{V}}\bigr\}= \mathcal{V}(\widetilde R_1,\dots, \widetilde R_D)\cap\mathcal{U}&=\mathcal{A}\cap\mathcal{U}\\
    &\subset\mathcal{V}(R_1,\dots,R_D)\cap\mathcal{U} =\;\bigl\{\,u+ {\phi}(u)\;:\;u\in {\mathcal{V}}\bigr\},
\end{align*}
so that $\phi=\widetilde{\phi}$ and the contention is an equality. Hence, the result follows. 
\end{proof}

\begin{proof}[Proof of Theorem~\ref{Theorem:recoveringSet}] Let $g = g^*$, which follows from Assumption~\ref{assumption:superStrong} and Remark~\ref{rem:Assump}.  

\noindent{\it Consistency around regular points}. Let \({\bf P} := \{P_1, \dots, P_N\} \subset \mathbb{R}_{\leq g}[x_1, \dots, x_d]\) be an orthonormal system (in the sense that the vectors of coefficients are pairwise orthonormal) such that the span of ${\bf P}$ is \({\rm I}_g\). Suppose that the sequence of subspaces spanned by the set of polynomials
\[
{\bf Q}_n := \{Q_{1,n}, \dots, Q_{N_n,n}\} \subset \mathbb{R}_{\leq g}[x_1, \dots, x_d]
\]
converges to \({\rm I}_g\) in the subspace distance (see Definition~\ref{defn:Subspace-dist}). Further, under Assumption~\ref{assumption:superStrong} we have $\mathcal{A} = \mathcal{V}({\rm I}_g)$. Let \(\mathcal{A}_n\) denote the algebraic set defined as the common zero set of \({\bf Q}_n\). Then the following hold:

\begin{enumerate}
    \item For every \(x_0 \in \operatorname{reg}(\mathcal{A})\), there exists a sequence \(\{x_n\}_{n \in \mathbb{N}}\) with \(x_n \in \mathcal{A}_n\) such that \(x_n \to x_0\).
    \item Every limit point of a sequence \(\{x_n\}_{n \in \mathbb{N}}\) with \(x_n \in \mathcal{A}_n\) belongs to \(\mathcal{A}\).
\end{enumerate}

To prove the second claim, let \(\{x_n\}_{n \in \mathbb{N}}\) be a sequence such that  
\[
Q_{1,n}(x_n) = \cdots = Q_{N_n,n}(x_n) = 0,
\]
and suppose that \(x_n \to x\) along a subsequence \(\{x_{n_k}\}_{k \in \mathbb{N}}\). Without loss of generality, we may assume that the polynomials in \({\bf Q}_n\) form an orthonormal system (i.e., their coefficient vectors are pairwise orthogonal). Then, for each fixed \(i \in \{1, \dots, N\}\), the sequence \(\{Q_{i,n_k}\}_{k \in \mathbb{N}}\) admits a further subsequence \(\{Q_{i,n_{k_\ell}}\}_{\ell \in \mathbb{N}}\) that converges to a limit polynomial \(Q_i\). 

Since the subspace spanned by \({\bf Q}_n\) converges to \({\rm I}_g\) (by our assumption), we have:
\begin{itemize}
    \item For sufficiently large \(n\), the dimension of \(\text{span}({\bf Q}_n)\) stabilizes, i.e., \(N_n = N\).
    \item The limiting polynomials \(\{Q_1, \dots, Q_N\}\) span the same subspace as \({\rm span}({\bf P})\), i.e.,  \[\text{span}(\{Q_1, \dots, Q_N\}) = {\rm I}_g.\]
    \item Each \(P_i\) is a linear combination of \(\{Q_1, \dots, Q_N\}\).
\end{itemize}

Because the coefficient vectors of \(Q_{i,n_{k_\ell}}\) converge to those of \(Q_i\), the polynomials \(Q_{i,n_{k_\ell}}\) converge to \(Q_i\) uniformly on compact sets. Therefore,
\[
|Q_i(x)| \leq |Q_i(x) - Q_i(x_{n_{k_\ell}})| + |Q_i(x_{n_{k_\ell}}) - Q_{i,n_{k_\ell}}(x_{n_{k_\ell}})| \to 0,
\]
where the first term vanishes by continuity of \(Q_i\), and the second by uniform convergence on compacts. It follows that \(Q_i(x) = 0\) for all \(i = 1, \dots, N\), so \(x\) is a common zero of \(\{Q_1, \dots, Q_N\}\), and hence also a zero of \(\{P_1, \dots, P_N\}\). Therefore, \(x \in \mathcal{A}\), completing the proof.

We now prove the first claim stated above (i.e., 1.). By definition, the span of 
\({\bf Q}_n := \{Q_{1,n}, \dots, Q_{N_n,n}\}\) converges to the span of \({\bf P} := \{P_1, \dots, P_N\}\) if the orthogonal projection matrix \(\Pi_n \in \mathbb{R}^{\nelem \times \nelem}\) onto \(\{\tau(Q_{1,n}), \dots, \tau(Q_{N_n,n})\}\) converges to the projection \(\Pi\) onto \(\{\tau(P_1), \dots, \tau(P_N)\}\), where \(\tau\) denotes the vectorization of polynomial coefficients.

Let \(x_0\) be a regular point of \(\mathcal{A}\). By Assumption~\ref{assumption:superStrong}, and possibly after a change of coordinates, we may assume that
\begin{equation}\label{eq:Span-P_i}
\operatorname{span} \{ \nabla P_i(x_0) : i = 1, \dots, N \} = \operatorname{span} \{ e_1, \dots, e_N \},
\end{equation}
where \(\{e_1, \dots, e_d\}\) is the canonical basis of \(\mathbb{R}^d\). Let us write \(x_0 = (u_0, v_0) \in \mathbb{R}^N \times \mathbb{R}^{d-N}\).

We define a mapping \(\Gamma : (\mathbb{R}_{\leq g}[x_1, \dots, x_d])^N \times \mathbb{R}^N \to \mathbb{R}^N\) by
\[
\Gamma(Q_1, \dots, Q_N, u) = \left( Q_1(u, v_0), \dots, Q_N(u, v_0) \right).
\]
This map is \(\mathcal{C}^\infty\) when the space of polynomials is endowed with the standard Euclidean topology on the coefficient vectors, as \(\Gamma\) involves only algebraic operations (evaluation of polynomials). Note that since \(x_0 = (u_0, v_0)\) lies in the zero set of \(P_1, \dots, P_N\), we have
\[
\Gamma(P_1, \dots, P_N, u_0) = (P_1(u_0, v_0), \dots, P_N(u_0, v_0)) = 0.
\]
This, together with the invertibility of the Jacobian matrix \(\frac{\partial \Gamma}{\partial u}(P_1, \dots, P_N, u_0)\), will allow us to apply the implicit function theorem, which we do next.

By construction, the partial derivative of \(\Gamma\) with respect to the last argument, evaluated at \((P_1, \dots, P_N, u_0)\), is the identity matrix \(I_N\), by our assumption~\eqref{eq:Span-P_i} on the gradients. Hence, the implicit function theorem guarantees the existence of neighborhoods \(U \subset (\mathbb{R}_{\le g}[x_1, \dots, x_d])^N\) of \((P_1, \dots, P_N)\) and \(V \subset \mathbb{R}^N\) of \(u_0\), and a unique continuously differentiable function
\[
\phi : U \to V
\]
such that for all \((Q_1, \dots, Q_N) \in U\),
\[
\Gamma(Q_1, \dots, Q_N, \phi(Q_1, \dots, Q_N)) = 0,
\]
i.e.,
\[
(Q_1(\phi(Q_1, \dots, Q_N), v_0), \dots, Q_N(\phi(Q_1, \dots, Q_N), v_0)) = 0.
\]
This implies that the point \((\phi(Q_1, \dots, Q_N), v_0) \in \mathbb{R}^d\) lies in the algebraic set defined by \(Q_1, \dots, Q_N\), and that
\[
(\phi(Q_1, \dots, Q_N), v_0) \to (u_0, v_0) = x_0 \quad \text{as} \quad (Q_1, \dots, Q_N) \to (P_1, \dots, P_N).
\]

Since \(\text{span}({\bf Q}_n) \) converges to \(\text{span}({\bf P})\), it follows that the projections \(\Pi_n(P_i) \to P_i\) for each \(i = 1, \dots, N\), where \(\Pi_n(P_i)\) denotes the projection of the coefficient vector of \(P_i\) onto the subspace spanned by the coefficient vectors of \({\bf Q}_n\). Hence, the tuple \((\Pi_n P_1, \dots, \Pi_n P_N)\) converges to \((P_1, \dots, P_N)\), and by continuity of \(\phi\), we have
\[
x_n := (\phi(\Pi_n P_1, \dots, \Pi_n P_N), v_0) \to x_0.
\]

We now verify that \(x_n \in \mathcal{A}_n\) for all large enough \(n\). Since \(\Pi_n P_1, \dots, \Pi_n P_N\) are projections of linearly independent polynomials, they remain linearly independent for large \(n\), by continuity and the non-vanishing of the determinant
\[
\det\left( [\tau P_1, \dots, \tau P_N]^\top [\tau P_1, \dots, \tau P_N] \right) \neq 0.
\]
By continuity of the Gram matrix, the determinant of
\[
[\Pi_n \tau P_1, \dots, \Pi_n \tau P_N]^\top [\Pi_n \tau P_1, \dots, \Pi_n \tau P_N]
\]
is also nonzero for sufficiently large \(n\), implying that \(\Pi_n P_1, \dots, \Pi_n P_N\) are linearly independent.

Furthermore, each \(\Pi_n P_i\) vanishes at \(x_n\) by construction. Since the span of \(\{\Pi_n P_1, \dots, \Pi_n P_N\}\) is equal to the span of \({\bf Q}_n\) for all large \(n\) (this follows from Weyl’s inequality, which implies that the rank of the projection matrix \(\Pi_n\) stabilizes), it follows that each \(Q_{i,n}\) also vanishes at \(x_n\). Hence,
\[
x_n \in \mathcal{V}(Q_{1,n}, \dots, Q_{N_n,n}) = \mathcal{A}_n \quad \text{for all } n \geq n_0,
\]
for some \(n_0 \in \mathbb{N}\), and \(x_n \to x_0\). This proves the claim and establishes the first part of Theorem~\ref{Theorem:main}. \qed \newline

\noindent{\it Global consistency.} We now establish consistency in the PK topology. To this end, we assume that \(\mathcal{A} = \overline{\operatorname{reg}(\mathcal{A})}\), i.e., the algebraic set is the closure of its regular locus. We have already shown that if the span of \({\bf Q}_n\) converges to the span of \({\bf P}\), then the sequence of estimated algebraic sets \(\mathcal{A}_n\) satisfies
\[
\operatorname{reg}(\mathcal{A}) \subset \liminf \mathcal{A}_n \subset \limsup \mathcal{A}_n \subset \mathcal{A}.
\]
Since both \(\liminf \mathcal{A}_n\) and \(\limsup \mathcal{A}_n\) are closed subsets of \(\mathbb{R}^d\) (cf.~\cite[Chapter~4]{RockafellarWets}), we conclude that
\[
\mathcal{A} = \overline{\operatorname{reg}(\mathcal{A})} \subset \liminf \mathcal{A}_n \subset \limsup \mathcal{A}_n \subset \mathcal{A}.
\]
Hence, all inclusions are equalities, and it follows that
\[
{\bf d}_{\mathrm{PK}}(\mathcal{A}_n, \mathcal{A}) \to 0,
\]
i.e., the sequence \(\mathcal{A}_n\) converges to \(\mathcal{A}\) in the PK topology. Therefore, global consistency follows from Theorem~\ref{Theorem:main}. \qed \newline

\noindent{\it Convergence rates.} Fix a regular point \( x_0 = (u_0, v_0) \in \mathcal{A} \). Define the map $\tilde{\Gamma} : (\mathbb{R}_{\leq g}[x_1, \dots, x_d])^N \times \mathbb{R}^{d-N} \times \mathbb{R}^N \to \mathbb{R}^N$ as \[\tilde{\Gamma}(Q_1, \dots, Q_N, v, u) := (Q_1(u, v), \dots, Q_N(u, v)).
\]
As in the previous application of the implicit function theorem (see the map \(\Gamma\)), the same regularity assumptions hold for \(\tilde{\Gamma}\). In particular, the Jacobian of \(\tilde{\Gamma}\) with respect to \(u\) at \((P_1, \dots, P_N, v_0, u_0)\) is invertible. Therefore, by the implicit function theorem, there exist:
\begin{itemize}
    \item an open neighborhood \(W \subset (\mathbb{R}_{\leq g}[x_1, \dots, x_d])^N \times \mathbb{R}^{d-N}\) of \((P_1, \dots, P_N, v_0)\),
    
    \item an open neighborhood \(W' \subset \mathbb{R}^N\) of \(u_0\), and

    \item a unique continuously differentiable function \(\psi: W \to W'\) such that for any \((Q_1, \dots, Q_N, v) \in W\) and \(u \in W'\),
\[
Q_1(u, v) = \cdots = Q_N(u, v) = 0 \quad \iff \quad u = \psi(Q_1, \dots, Q_N, v).
\]
\end{itemize}

Without loss of generality, we may write \(W = W_1 \times W_2\), where \(W_1\) is a neighborhood of \((P_1, \dots, P_N)\) in the coefficient space, and \(W_2\) is a neighborhood of \(v_0\) in \(\mathbb{R}^{d-N}\). Then for any \((Q_1, \dots, Q_N) \in W_1\) and \(v \in W_2\),
\[
Q_1(u, v) = \cdots = Q_N(u, v) = 0 \quad \iff \quad u = \psi(Q_1, \dots, Q_N, v).
\]
Recall that $\hat{\rm I}_g$ (see~\eqref{eq:hat-Ig}) is a subspace of the space of real polynomials of degree at most $g$, and $\Pi_{\hat{\rm I}_g}$ is the orthogonal projection operator (with respect to the standard inner product on the coefficient vectors). 

Applying the implicit function theorem to a perturbed system of polynomial equations defined by the projected polynomials $\Pi_{\hat{\rm I}_g} P_1,\ldots, \Pi_{\hat{\rm I}_g} P_N$ (recall~\eqref{Simplification-tau}), we obtain:
\[
(\Pi_{\hat{\rm I}_g} P_1)(u, v) = \cdots = (\Pi_{\hat{\rm I}_g} P_N)(u, v) = 0 
\quad \iff \quad u = \psi(\Pi_{\hat{\rm I}_g} P_1, \dots, \Pi_{\hat{\rm I}_g} P_N, v),
\]
and, for \(n\) large enough,
\[
(u, v) \in \mathcal{A}^{(n,g)} \cap (W' \times W_2) 
\quad \iff \quad u = \psi(\Pi_{\hat{\rm I}_g} P_1, \dots, \Pi_{\hat{\rm I}_g} P_N, v) \text{ and } v \in W_2.
\]
This shows that points in the estimated set \( \mathcal{A}^{(n, g)} \) near $x_0$ can be parametrized similarly via $\psi$.

Now fix a bounded neighborhood $\mathcal{U} = \tilde{W}' \times \tilde{W}_2 \subset \mathbb{R}^d$ such that its closure is contained in $W' \times W_2$. Then, for all $n$ sufficiently large, $x = (u, v) \in \mathcal{A} \cap \mathcal{U}$ corresponds to a point $x_n = (u_n, v)$ in the estimated set \( \mathcal{A}^{(n,g)} \cap \mathcal{U} \), where:
\begin{equation}
  u = \psi(P_1, \dots, P_N, v), \qquad \mbox{and} \qquad 
  u_n = \psi(\Pi_{\hat{\rm I}_g} P_1, \dots, \Pi_{\hat{\rm I}_g} P_N, v).
\end{equation}
Hence,
\[
\sup_{x \in \mathcal{A} \cap \mathcal{U}} \inf_{y \in \mathcal{A}^{(n,g)} \cap \mathcal{U}} \|x - y\| 
\leq \sup_{v \in \tilde{W}_2} \left\| \psi(\Pi_{\hat{\rm I}_g} P_1, \dots, \Pi_{\hat{\rm I}_g} P_N, v) 
- \psi(P_1, \dots, P_N, v) \right\|,
\]
and similarly,
\[
\sup_{y \in \mathcal{A}^{(n,g)} \cap \mathcal{U}} \inf_{x \in \mathcal{A} \cap \mathcal{U}} \|x - y\| 
\leq \sup_{v \in \tilde{W}_2} \left\| \psi(\Pi_{\hat{\rm I}_g} P_1, \dots, \Pi_{\hat{\rm I}_g} P_N, v) 
- \psi(P_1, \dots, P_N, v) \right\|.
\]

Note that \(\psi(\cdot,v) \in \mathcal{C}^1\) is  Lipschitz with a constant that does not depend on $v \in \overline{\tilde{W}_2}$.
Therefore, the supremum difference above is of the same order as the difference in coefficient projections:
\[
\sup_{v \in \tilde{W}_2} \left\| \psi(\Pi_{\hat{\rm I}_g} P_1, \dots, \Pi_{\hat{\rm I}_g} P_N, v) 
- \psi(P_1, \dots, P_N, v) \right\| = O(\|\Pi_{\hat{\rm I}_g} - \Pi_{{\rm I}_g}\|_{\rm Fr}).
\]
Now, by Theorem~\ref{Theorem:main} we obtain the desired parametric rate. This establishes a local convergence rate near \(x_0\). 
\end{proof}

\subsection{Proofs of Section~\ref{Section:tubes}}\label{App:Thm-Semi-Alg}

We divide the proof of Theorem~\ref{Theorem:completeRecovery} into two parts: the first establishes consistency, and the second derives the convergence rates.

\begin{proof}[Proof of Theorem~\ref{Theorem:completeRecovery} (Consistency)]

The consistency result follows from the auxiliary lemma stated below.

\begin{Lemma}[Consistency of a tubular approximation]
\label{Lemma:auxiliaryRecoveringSet}
Fix a degree \(g\) and consider, for each \(n\), a collection of polynomials ${\bf Q}_n \;=\;\bigl\{Q_{1,n},\dots,Q_{N_n,n}\bigr\}
\;\subset\; \R_{\le g}[x_1,\dots,x_d],$
whose coefficient vectors are orthonormal in the standard monomial basis
of total degree \(\le g\).
For \(\lambda>0\) set
\[
\mathcal{V}_{\lambda}({\bf Q}_n)
\;:=\;
\Bigl\{x\in\R^{d} \;:\; |Q(x)|\le\lambda
      \;\text{ for every }\; Q\in{\bf Q}_n\Bigr\}.
\]

Let \(\tilde\Pi_n\) denote the orthogonal projection (in coefficient space) onto
\(\operatorname{span}\{\tau Q_{1,n},\dots,\tau Q_{N_n,n}\}\),
and write \(\Pi_n:=\tau^{-1}\tilde\Pi_n\tau\).
Suppose there exists a deterministic sequence \(r_n'\downarrow0\) such that
\[
\frac{\|\Pi_n-\Pi_{{\rm J}_g}\|_{\mathrm{Fr}}}{r_n'}\;\longrightarrow\;0,
\qquad n\to\infty.
\]
Then for any sequence \(\lambda_n\downarrow0\) that satisfies
\(\lambda_n / r_n' \to\infty\), we have
\[
{\bf d}_{\mathrm{PK}}\bigl(\mathcal{V}_{\lambda_n}({\bf Q}_n),\;\mathcal{A}\bigr)
\;\longrightarrow\;0,
\]
where \(d_{\mathrm{PK}}\) denotes the Painlevé–Kuratowski distance between sets.
\end{Lemma}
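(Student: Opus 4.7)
By Definition~\ref{def:PK}, PK convergence reduces to the two inclusions $\limsup_n \mathcal{V}_{\lambda_n}({\bf Q}_n) \subset \mathcal{A}$ and $\mathcal{A} \subset \liminf_n \mathcal{V}_{\lambda_n}({\bf Q}_n)$. My plan is to verify each inclusion by exploiting decompositions of polynomials through the projector $\Pi_{{\rm J}_g}$, together with the operator-norm bound
\[
\|(I - \Pi_{{\rm J}_g})\Pi_n\|_{\mathrm{op}} \le \|\Pi_n - \Pi_{{\rm J}_g}\|_{\mathrm{Fr}},
\]
which is immediate from the algebraic identity $(I - \Pi_{{\rm J}_g})\Pi_n = (I - \Pi_{{\rm J}_g})(\Pi_n - \Pi_{{\rm J}_g})$, and the elementary continuity fact that on any compact $K \subset \R^d$, $\sup_{x \in K}|P(x)| \le C_K \|\tau P\|$. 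I identify throughout a polynomial of degree at most $g$ with its coefficient vector via $\tau$, and I use that ${\rm I}_g \subset I(\mathcal{A})$, since every polynomial vanishing on $\{\theta_i\}_{i \in \N}$ also vanishes on its Zariski closure.

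For the \emph{upper-limit} inclusion, I would take a convergent subsequence $x_{n_k} \to x_\infty$ with $x_{n_k} \in \mathcal{V}_{\lambda_{n_k}}({\bf Q}_{n_k})$, fix any $P \in {\rm I}_g$, and show $P(x_\infty) = 0$. Since $\tau P \in {\rm J}_g$, I write $P = (P - \Pi_n P) + \Pi_n P$. The first summand has coefficient norm at most $\|\Pi_n - \Pi_{{\rm J}_g}\|_{\mathrm{Fr}}\|P\|$, which, evaluated on the bounded set $\{x_{n_k}\}$, vanishes as $n \to \infty$. The second summand expands as $\Pi_n P = \sum_{i=1}^{N_n}\alpha_{i,n} Q_{i,n}$ with $\sum_i \alpha_{i,n}^2 \le \|P\|^2$ by orthonormality of the $Q_{i,n}$; Cauchy--Schwarz combined with the tube constraint $|Q_{i,n}(x_{n_k})| \le \lambda_{n_k}$ and the rank bound $N_n \le \nelem$ yields $|(\Pi_n P)(x_{n_k})| \le \|P\|\sqrt{\nelem}\,\lambda_{n_k} \to 0$. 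Continuity of $P$ forces $P(x_\infty) = 0$; since $P \in {\rm I}_g$ was arbitrary, $x_\infty \in \mathcal{V}({\rm I}_g) = \mathcal{A}$ (using $g \ge g^*$ and Lemma~\ref{Lemma:StableIdeal}).

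For the \emph{lower-limit} inclusion, given $x_0 \in \mathcal{A}$ I would use the constant sequence $x_n := x_0$ and verify $x_0 \in \mathcal{V}_{\lambda_n}({\bf Q}_n)$ for all $n$ large enough. Decompose each generator as $Q_{i,n} = \Pi_{{\rm J}_g}Q_{i,n} + (I - \Pi_{{\rm J}_g})Q_{i,n}$: the first term vanishes at $x_0$ because its coefficients lie in ${\rm J}_g$ and ${\rm I}_g \subset I(\mathcal{A})$. For the second, since $Q_{i,n}$ has unit coefficient norm and lies in the range of $\Pi_n$, the operator-norm inequality above gives
\[
\|(I - \Pi_{{\rm J}_g})Q_{i,n}\| = \|(I - \Pi_{{\rm J}_g})\Pi_n Q_{i,n}\| \le \|\Pi_n - \Pi_{{\rm J}_g}\|_{\mathrm{Fr}} = o(r_n').
\]
Evaluating at $x_0$ produces $|Q_{i,n}(x_0)| = o(r_n')$, and since $\lambda_n/r_n' \to \infty$, this is eventually smaller than $\lambda_n$ uniformly over $i \in \{1,\dots,N_n\}$, as required.

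The arguments are short, and the only step requiring real insight is the use of the two complementary decompositions: the vanishing of $\Pi_{{\rm J}_g}P$ encodes membership in the vanishing ideal, while the orthonormality of the $Q_{i,n}$ together with the tube constraint controls the residual. The hypothesis $\lambda_n/r_n' \to \infty$ is precisely what is needed to absorb the projection error in the lower-limit step, whereas the upper-limit step only uses $\lambda_n \to 0$. I do not foresee a serious obstacle; the finite-dimensional rank bound $N_n \le \nelem$ (which makes the Cauchy--Schwarz step dimension-free up to a harmless constant) and the fact that $\{x_{n_k}\}\cup\{x_\infty\}$ is bounded are both automatic in this setting.
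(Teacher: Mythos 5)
Your proof is correct, and in the outer (lim sup) direction it is essentially the paper's argument: you expand $\Pi_n P=\sum_i\alpha_{i,n}Q_{i,n}$ with $\sum_i\alpha_{i,n}^2\le\|P\|^2$ by orthonormality, invoke the tube constraint and Cauchy--Schwarz to get $|(\Pi_nP)(x_{n_k})|\lesssim\sqrt{\nelem}\,\lambda_{n_k}$, and pass to the limit by continuity --- this is exactly the paper's proof of its step (ii). Where you genuinely diverge is the inner (lim inf) direction. The paper fixes an orthonormal basis $P_1,\dots,P_N$ of ${\rm I}_g$, shows $|(\Pi_nP_i)(x)|=o(\lambda_n)$ for $x\in\mathcal A$, argues that $\dim E_n=N$ eventually so that $\{\Pi_nP_i\}$ is a basis of $E_n$, and then writes each $Q_{i,n}$ in that basis and proves the representation coefficients are uniformly bounded via Gram-matrix estimates (its display \eqref{eq:controled-norm-seialg} and the bound $\max_j c_{j,n}^2\le 2$). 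You instead decompose each $Q_{i,n}$ directly as $\Pi_{{\rm J}_g}Q_{i,n}+(I-\Pi_{{\rm J}_g})Q_{i,n}$, kill the first term because its coefficients lie in ${\rm J}_g$ so the corresponding polynomial vanishes on $\mathcal A$, and control the second via the identity $(I-\Pi_{{\rm J}_g})\Pi_n=(I-\Pi_{{\rm J}_g})(\Pi_n-\Pi_{{\rm J}_g})$ together with $Q_{i,n}=\Pi_nQ_{i,n}$ and $\|Q_{i,n}\|=1$, obtaining $|Q_{i,n}(x_0)|=o(r_n')=o(\lambda_n)$ uniformly in $i$. This is shorter, avoids the rank-stabilization claim and the coefficient-boundedness argument entirely, and correctly isolates that only $\lambda_n/r_n'\to\infty$ is needed in this direction; the paper's route, on the other hand, makes explicit the eventual identification of $E_n$ with a perturbed copy of ${\rm J}_g$, which it reuses elsewhere. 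Both arguments rely on the same underlying facts ($g\ge g^*$ so that $\mathcal V({\rm I}_g)=\mathcal A$, and evaluation on compacta being controlled by the coefficient norm), so your version is a valid and somewhat cleaner substitute.
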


We now explain how to deduce Theorem~\ref{Theorem:completeRecovery} from Lemma~\ref{Lemma:auxiliaryRecoveringSet}. Since \(\lambda_n \to 0\) and \(n^{1/2} \lambda_n \to \infty\) by assumption, one can choose a sequence \(r_n' \to 0\), as $n \to \infty$, such that
\[
n^{1/2} r_n' \to \infty \qquad \text{and} \qquad (r_n')^{-1} \lambda_n \to \infty.
\]
From Theorem~\ref{Theorem:main}, we have
\[
{\rm d}(\hat{\rm J}_g, {\rm J}_g) = \mathcal{O}_{\mathbb{P}}\left(n^{-1/2}\right),
\]
which implies
\[
(r_n')^{-1} {\rm d}(\hat{\rm J}_g, {\rm J}_g) \xrightarrow{\mathbb{P}} 0.
\]
By Lemma~\ref{Lemma:auxiliaryRecoveringSet}, applied to \(\mathcal{V}_{\lambda_n}({\bf Q}_n) = \mathcal{A}_{\lambda_n}^{(n,g)}\) and $r_n'$ as in the above display, it follows that \(\mathcal{A}_{\lambda_n}^{(n,g)}\) converges to \(\mathcal{A}\) in the Painlevé–Kuratowski sense. Since the metric space $({\rm CL}_{\neq \emptyset}(\R^d), {\bf d}_{\mathrm{PK}})$ is separable (see \cite[Chapter~4]{RockafellarWets}), convergence in probability in this space is equivalent to the property that every subsequence has an a.s.~converging subsubsequence. In our case, Lemma~\ref{Lemma:auxiliaryRecoveringSet} ensures that along any such subsequence, $\mathcal{A}_{\lambda_n}^{(n,g)}$ converges a.s.~to $\mathcal{A}$. Therefore, this implies convergence in probability:
\[
{\bf d}_{\mathrm{PK}}(\mathcal{A}_{\lambda_n}^{(n,g)}, \mathcal{A}) \xrightarrow{\mathbb{P}} 0.
\]

Now we proceed with the proof of the lemma.

{
\begin{proof}[Proof of Lemma~\ref{Lemma:auxiliaryRecoveringSet}]
We show the two inclusions that characterize Painlevé–Kuratowski convergence:

\begin{enumerate}\setlength\itemsep{4pt}
\item[(i)] \textit{(Inner approximation).}  
For every $x\in\mathcal A$ there exists $n_0=n_0(x)$ such that 
$x\in\mathcal V_{\lambda_n}({\bf Q}_n)$ for all $n\ge n_0$.
\item[(ii)] \textit{(Outer approximation).}  
Every limit point of a sequence $\{x_n\}$ with
$x_n\in\mathcal V_{\lambda_n}({\bf Q}_n)$ belongs to $\mathcal A$.
\end{enumerate}
Together, (i) and (ii) imply ${\bf d}_{\mathrm{PK}}\!\bigl(\mathcal V_{\lambda_n}({\bf Q}_n),\mathcal A\bigr)\to0$. The rest of the proof is devoted to showing (i) and (ii).  We denote $N=\dim {\rm J}_g$ and fix a basis $P_1,\dots,P_N$ of ${\rm I}_g$ whose
coefficient vectors $\tau P_1,\dots,\tau P_N$ form an orthonormal basis of
${\rm J}_g$.  We abuse of notations and denote by $\langle P, Q\rangle$ the inner product of the coeficients of $P$ and $Q$. 
Set $E_n:=\operatorname{span}{\bf Q}_n=\operatorname{range}\Pi_n$.
For $n$ large enough, $\dim E_n=N$ (as in the proof of
Theorem~\ref{Theorem:recoveringSet}), and
$\bigl\{\Pi_nP_i\bigr\}_{i=1}^{N}$ is itself a basis of $E_n$. In the rest of the proof we assume that $n$ is large enough so that  $\dim E_n=N$. Moreover, it follows that 
\begin{align}
    \begin{split}\label{eq:controled-norm-seialg}
        |\langle  \Pi_nP_i , \Pi_nP_j \rangle- \langle  P_i , P_j \rangle |& = |\langle  \Pi_nP_i , \Pi_nP_j \rangle- \langle \Pi_{{\rm J}_g} P_i , \Pi_{{\rm J}_g} P_j \rangle |\\
        &\leq |\langle  \Pi_nP_i -\Pi_{{\rm J}_g} P_i , \Pi_nP_j \rangle|+ |\langle  \Pi_{{\rm J}_g} P_i , \Pi_nP_j -\Pi_{{\rm J}_g} P_j \rangle| \\
        &\leq \|\Pi_n-\Pi_{{\rm J}_g}\|_{\mathrm{Fr}} (\| \Pi_n P_j\|+1) \\
        &\le 2 \|\Pi_n-\Pi_{{\rm J}_g}\|_{\mathrm{Fr}} \to 0,
    \end{split}
\end{align}
where in the last inequality we use the fact that an orthogonal projection is a contraction.

\medskip
\noindent
\emph{Proof of (i).}
Fix $x\in\mathcal A$.  
Since $\Pi_{{\rm J}_g} P_i (x)= P_i (x)=0  $, it follows that
\begin{equation*}
\bigl|(\Pi_nP_i)(x)\bigr|
   =\bigl|(\Pi_{{\rm J}_g}-\Pi_n)P_i(x)\bigr|\le C\|x\|^{g}\,\|\Pi_n-\Pi_{{\rm J}_g}\|_{\mathrm{Fr}},
\end{equation*}
where $C=C(d,g)$ depends only on $d$ and $g$.
By assumption,
$\|\Pi_n-\Pi_{{\rm J}_g}\|_{\mathrm{Fr}}=o(r_n')$, while
$\lambda_n/r_n'\to\infty$; hence $\bigl|(\Pi_nP_i)(x)\bigr|=o(\lambda_n)$. Fix $Q_{i,n}\in {\bf Q}_n$ and use the fact that  as $\bigl\{\Pi_nP_i\bigr\}_{i=1}^{N}$ is a basis of $E_n$ to get that 
\begin{equation}
    \label{eq:representation-Qn-semi}
    Q_{i,n}=\sum_{j=1}^N c_{j,n} \Pi_nP_i,
\end{equation}
for some real coefficients $c_{1,n}, \dots, c_{N,n} $. We note that (i) follows after showing the sequence of coefficients  is bounded. Taking norms in \eqref{eq:representation-Qn-semi} we get 
\begin{align*}
    1=\|Q_{i,n}\|&= \sum_{j\neq k} c_{j,n}^2 \| \Pi_nP_j\|^2 + \sum_{j\neq k} c_{j,n} c_{k,n} \langle \Pi_nP_j,\Pi_nP_k \rangle \\
    &\geq \max_j  c_{j,n}^2 \min_{j} \| \Pi_nP_j\|^2- N \max_j  c_{j,n}^2  \max_{j\neq k}(\langle \Pi_nP_j,\Pi_nP_k \rangle)^2 .
\end{align*}
By \eqref{eq:controled-norm-seialg} there exists $n_0$ such that 
\begin{align*}
    \| \Pi_nP_j\|^2 \geq  1/2 \quad \text{and}\quad  \max_{j\neq k}(\langle \Pi_nP_j,\Pi_nP_k \rangle)^2 \leq \frac{1}{4 N} \quad \text{for all $n\geq n_0$,}
\end{align*}
so that 
$ \max_j  c_{j,n}^2 \leq 2$ for $n$ large enough. This concludes  the proof of (i).

\medskip
\noindent
\emph{Proof of (ii)}. 
Let $x_n\in\mathcal V_{\lambda_n}({\bf Q}_n)$ and
assume $x_n\to\bar x$. Fix $j\in \{1, \dots, N\}$. 
We note that  $\|\Pi_nP_j\|\leq 1$. 
Since $\Pi_nP_j \in  E_n$ and ${\bf Q}_n$ is an orthonormal system, there exist real coefficients $c_{1,n}', \dots, c_{N,n}' $ with $ \sum_{i=1}^{N} (c_{i,n}')^2\leq 1$ and such that 
$ \Pi_nP_j=\sum_{i=1}^N c_{i,n}' Q_{i,n} $. Then it follows that  
$$| \Pi_nP_j(x_n)|  \leq \sum_{i=1}^N |c_{i,n}'| |Q_{i,n}(x_n)| \leq \lambda_n \sum_{i=1}^N |c_{i,n}'| \leq \sqrt{N}  \lambda_n ,$$
which implies
\begin{align}
 | P_j(\Bar{x})|&\leq | P_j(\Bar{x})-P_j(x_n)|+ | \Pi_nP_j({x}_n)-P_j(x_n)| + |\Pi_nP_j({x}_n)| \\
 &\leq  | P_j(\Bar{x})-P_j(x_n)|+ | \Pi_nP_j({x}_n)-P_j(x_n)| + \sqrt{N} \lambda_n \to 0,
\end{align}
where the first term tends to zero by the continuity of $P_j$, the second by $\|\Pi_n-\Pi_{{\rm J}_g}\|_{\mathrm{Fr}}\to 0$ and the last one by $
\lambda_n\to 0$.
Therefore,  (ii) holds.
\end{proof}}

\begin{proof}[Proof of Theorem~\ref{Theorem:completeRecovery} (Convergence rate)]
Recall that $\widehat P_1,\dots,\widehat P_{\widehat N}$ denote the
data–driven polynomials that define the semi-algebraic tube
\(
\mathcal A^{(n,g)}_{\lambda_n}
    :=\bigl\{x\in\R^d : |\widehat P_i(x)|\le\lambda_n
      \text{ for all }i=1,\dots,\widehat N\bigr\}.
\)
We prove, on a suitably chosen neighborhood~$\mathcal U\subset\R^d$,
\begin{align}
\sup_{x\in\mathcal A\cap\mathcal U}
      \inf_{y\in\mathcal A^{(n,g)}_{\lambda_n}\cap\mathcal U}\|x-y\|
    &=\mathcal O_{\mathbb P}(\lambda_n),\label{eq:first-claim}\\[4pt]
\sup_{y\in\mathcal A^{(n,g)}_{\lambda_n}\cap\mathcal U}
      \inf_{x\in\mathcal A\cap\mathcal U}\|x-y\|
    &=\mathcal O_{\mathbb P}(\lambda_n).\label{eq:second-claim}
\end{align}
The first inclusion requires only that $\mathcal U$ be bounded; the
second needs the local chart supplied by
Lemma~\ref{lemma:Local-ring-same-degree}.

\medskip
\noindent
\emph{Proof of \eqref{eq:first-claim}.}
Let $\mathcal U\subset\R^{d}$ be bounded.
Observe that
\[
{\mathbb P}\Bigl(\sup_{x\in\mathcal A\cap\mathcal U}
               \inf_{y\in\mathcal A^{(n,g)}_{\lambda_n}\cap\mathcal U}\|x-y\|>0\Bigr)
 \le
{\mathbb P}\Bigl(
        \sup_{x\in\mathcal A\cap\mathcal U}
        \max_{1\le i\le\widehat N}|\widehat P_i(x)|>\lambda_n\Bigr).
\]
Because $g\ge g^{\!*}$, a point $x$ lies in $\mathcal A$ if and only if
\(
(\Pi_{{\rm J}_g}P)(x)=0
\)
for every $P\in\R_{\le g}[x_1,\dots,x_d]$.
Hence
\begin{align*}
&{\mathbb P}\Bigl(
       \sup_{x\in\mathcal A\cap\mathcal U}
       \max_{i}|\widehat P_i(x)|>\lambda_n\Bigr)                                       \\
&\quad=\;
{\mathbb P}\Bigl(
       \sup_{x\in\mathcal A\cap\mathcal U}
       \max_{i}|(\widehat P_i-\Pi_{{\rm J}_g}\widehat P_i)(x)|>\lambda_n\Bigr)           \\
&\quad\le
{\mathbb P}\Bigl(
       \sup_{x\in\mathcal U}
       \max_{i}|((\Pi_{\widehat{\rm J}_g}-\Pi_{{\rm J}_g})\widehat P_i)(x)|
       >\lambda_n\Bigr).
\end{align*}
Evaluation at $x$ is a bounded linear functional on coefficient space, so
for some constant $C=C(\mathcal U,g,d)$,
\[
|((\Pi_{\widehat{\rm J}_g}-\Pi_{{\rm J}_g})\widehat P_i)(x)|
    \le C\,\|\Pi_{\widehat{\rm J}_g}-\Pi_{{\rm J}_g}\|_{\mathrm{Fr}}
    \quad\text{for all }x\in\mathcal U.
\]
Theorem~\ref{Theorem:main} gives
\(
\|\Pi_{\widehat{\rm J}_g}-\Pi_{{\rm J}_g}\|_{\mathrm{Fr}}
        =\mathcal O_{\mathbb P}(n^{-1/2}),
\)
and $\lambda_n/n^{-1/2} \to 0 $ by assumption; hence the probability above
tends to~$0$, proving \eqref{eq:first-claim}.

\medskip\noindent
\emph{Proof of \eqref{eq:second-claim}.}
Fix $x_0\in\operatorname{reg}(\mathcal A)$ and let
$\mathcal U$, $R_1,\dots,R_D$, and the chart
$(v,w)\mapsto v+\phi(w)$ be as in
Lemma~\ref{lemma:Local-ring-same-degree},
with $\{\nabla R_i(x_0)\}_{i=1}^{D}$ aligned with the first $D$
coordinate axes.
Writing $x=(v,w)\in\R^{D}\times\R^{d-D}$,
a first-order Taylor expansion about the point
$(\phi(w),w)\in\mathcal A$ yields
\[
\bigl(R_1(v,w),\dots,R_D(v,w)\bigr)^{\!\top}
     =J_{R}(u^\ast,w)\,(v-\phi(w)),
\]
where $u^\ast$ lies on the segment joining $v$ and $\phi(w)$ and
$J_{R}$ is the $D\times D$ Jacobian
$\bigl(\partial_{v_j}R_i\bigr)_{i,j=1}^{D}$.
By continuity of $J_R$ and
the non-singularity at $x_0$, we may shrink~$\mathcal U$ so that
$\|J_{R}(u^\ast,w)^{-1}\|\le L$ for all $(u^\ast,w)\in\mathcal U$.
Hence
\[
\|v-\phi(w)\|
  \;\le\;
  L\,\|(R_1(v,w),\dots,R_D(v,w))\|.
\]
Now let $(v,w)\in\mathcal A^{(n,g)}_{\lambda_n}\cap\mathcal U$.
By construction of $\mathcal A^{(n,g)}_{\lambda_n}$
and Theorem~\ref{Theorem:main},
\(
\max_{1\le i\le D}|R_i(v,w)|=\mathcal O_{\mathbb P}(\lambda_n),
\)
whence
\(
\sup_{(v,w)\in\mathcal A^{(n,g)}_{\lambda_n}\cap\mathcal U}
     \|v-\phi(w)\|
   =\mathcal O_{\mathbb P}(\lambda_n).
\)
Because $u+\phi(u)$ parametrizes
$\mathcal A\cap\mathcal U$, this proves \eqref{eq:second-claim}.
\end{proof}
\medskip
Combining \eqref{eq:first-claim} and \eqref{eq:second-claim} completes the
proof of the convergence rate asserted in
Theorem~\ref{Theorem:completeRecovery}.
\end{proof}

\begin{proof}[Proof of Lemma~\ref{lemma:estimation-cross-ideal}]
Throughout the proof we use the elementary fact that, for any closed set
\(\tilde S\subset\R^{\kappa_{d,g^{\!*}}}\) and any measurable projection
\(\Pi_S:\R^{\kappa_{d,g^{\!*}}}\to\tilde S\),
\begin{equation}\label{eq:projection-2Lipschitz}
\|\,\Pi_S(x)-s\|\;\le\;2\|x-s\|,
\qquad x\in\R^{\kappa_{d,g^{\!*}}},\;s\in\tilde S,
\end{equation}
because \(\|x-\Pi_S(x)\|\le\|x-s\|\) by optimality of the projection.

\vspace{4pt}
\noindent\emph{(a)  Every limit point lies in \({\rm J}_{g^{\!*}}\).}
Let \(u_n\in\hat{\rm J}_{g^*}^S\) and assume
\(u_n\stackrel{\mathbb P}{\longrightarrow}u\).
By definition of $\hat{\rm J}_{g^*}^S$ there exists
\(v_n\in\hat{\rm J}_{g^*}^S$ with \(\|v_n\|\le1\) such that
\(u_n=\Pi_S(v_n)\).
Corollary~\ref{Coro:ConsistencyPolynomials} (consistency of the
kernel estimator) guarantees
\(v_n\stackrel{\mathbb P}{\longrightarrow}v_\infty\)
for some \(v_\infty\in{\rm J}_{g^{\!*}}\subset\tilde S\).
Applying \eqref{eq:projection-2Lipschitz} with \(s=v_\infty\) yields
\[
\|u_n-v_\infty\|
   \le 2\|v_n-v_\infty\|
   \xrightarrow{\mathbb P} 0,
\]
hence \(u=v_\infty\in{\rm J}_{g^{\!*}}\).

\vspace{4pt}
\noindent\emph{(b)  Density of $\hat{\rm J}_{g^*}^S$ in
\({\rm J}_{g^{\!*}}\).}
Fix \(u\in{\rm J}_{g^{\!*}}\) with \(\|u\|\le1\).
By Corollary~\ref{Coro:ConsistencyPolynomials} there exists
$v_n\in\hat{\rm J}_{g^*}^S$ such that
\(v_n\stackrel{\mathbb P}{\longrightarrow}u\).
Set
\[
w_n\;:=\;\frac{\|u\|}{\|v_n\|}\,v_n
\quad\bigl(\text{define }w_n:=0\text{ if }v_n=0\bigr),
\]
so that \(\|w_n\|\le1\) and \(w_n\stackrel{\mathbb P}{\longrightarrow}u\).
Define \(u_n:=\Pi_S(w_n)\in\hat{\rm J}_{g^*}^S\).
Using \eqref{eq:projection-2Lipschitz} with \(s=u\) we obtain
\[
\|u_n-u\|
   \le 2\|w_n-u\|
   \xrightarrow{\mathbb P} 0,
\]
so \(u_n\stackrel{\mathbb P}{\longrightarrow}u\), completing the proof.
\end{proof}

\begin{proof}[Proof of Theorem~\ref{Theorem:recoveringSet-Prior}] First we underline some useful observations.  Assumption~\ref{Assumption-prior-knloegde-set} implies that ${\rm dim}({\rm J}_{g^*})=1$. We denote by $P$ a generator of ${\rm I}_{g^*}$ and $u\in {\rm J}_{g^*}$ its coefficients. For a polynomial $Q$ with vector of coefficients $q$,   we denote by $\|Q\|$ the Euclidean norm of the coefficients $q$ of $Q$ and $\Pi_S(Q)$ the polynomial associated to $ \Pi_S(q)$. 

We now show that $\tilde S$ is closed (in coefficient space). Let
\[
P^{(n)} \;=\; \lambda_n\, Q^{(n)}_1 \cdots Q^{(n)}_m \;\in\; \tilde S,
\qquad \|Q^{(n)}_i\|=1\ \ (i=1,\dots,m),
\]
and assume $P^{(n)} \to P$ as $n\to\infty$. Since each coefficient space is finite
dimensional, the unit spheres $\{\|Q\|=1\}$ are compact; by a diagonal argument
we may assume (passing to a subsequence) that $Q^{(n)}_i \to Q_i$ for every $i$.

If $(\lambda_n)$ is bounded, pass to a further subsequence with
$\lambda_n \to \lambda \in \mathbb{R}$. Polynomial multiplication is continuous
in coefficient space, hence
\[
P \;=\; \lim_{n\to\infty} P^{(n)}
  \;=\; \lim_{n\to\infty} \lambda_n\, Q^{(n)}_1 \cdots Q^{(n)}_m
  \;=\; \lambda\, Q_1 \cdots Q_m \;\in\; \tilde S.
\]
Suppose instead $|\lambda_n|\to\infty$. Then, as $P^{(n)} \to P$,
\[
Q^{(n)}_1 \cdots Q^{(n)}_m
 \;=\; \frac{1}{\lambda_n} P^{(n)} \;\longrightarrow\; 0
\]
in coefficient norm, so by continuity of multiplication we obtain
$Q_1 \cdots Q_m = 0$. Since $\mathbb{R}[x_1,\dots,x_d]$ is an integral domain,
this implies $Q_i \equiv 0$ for some $i$, contradicting $\|Q_i\|=1$.
Therefore $(\lambda_n)$ must be bounded, and the first case applies, yielding
$P\in\tilde S$. Hence $\tilde S$ is closed.

We now establish the parametric rate of convergence around regular points of~\(\mathcal A\). Exactly as in the proof of Theorem~\ref{Theorem:recoveringSet}, there exists a (data–dependent) index \(N\in\mathbb N\) such that
\[
\dim\!\bigl(\hat{\rm J}_{g^{\!*}}\bigr)=1
\qquad\text{for all }n\ge N .
\]
Hence, for \(n\ge N\) the projected space
\(\Pi_S\!\bigl(\hat{\rm J}_{g^{\!*}}\bigr)\subset\tilde S\) is also one–dimensional. Because \(\Pi_S\) is \(2\)-Lipschitz (inequality~\eqref{desigualdad-Gilles}),
\[
{\rm d}\!\bigl(\Pi_S(\widehat{\rm J}_{g^{\!*}}),\,{\rm J}_{g^{\!*}}\bigr)
   \;=\;\mathcal O_{\mathbb P}\!\bigl(n^{-1/2}\bigr).
\]
Repeating the argument of Theorem~\ref{Theorem:recoveringSet}, with Lemma~\ref{lemma:Local-ring-same-degree} in place of its irreducible counterpart, yields:
for every \(x_0\in\operatorname{reg}(\mathcal A)\) there exists an open Euclidean neighborhood \(\mathcal U\)  of $x_0$ such that
\[
d_{\mathrm H}\!\bigl(\mathcal A\cap\mathcal U,\,
        \mathcal A^{(n,g^{\!*})}_{S}\cap\mathcal U\bigr)
   =\mathcal O_{\mathbb P}\!\bigl(n^{-1/2}\bigr).
\]
This proves the desired local \(n^{-1/2}\) convergence rate.

We now show the consistency of the estimator.
Fix\(\Pi_S(P^{(n)})\xrightarrow{\mathbb P}P\) and recall that, for
\(n\ge N\),
\[
  \mathcal A^{(n,g^{\!*})}_{S}
  \;=\;\mathcal V\!\bigl(\Pi_S(P^{(n)})\bigr).
\]
It suffices to prove the following general statement: There exists an open Euclidean neighborhood
\(\mathcal U\ni x_0\) such that, for every sequence
\(Q^{(n)}\in S\) with \(Q^{(n)}\to P\),
\begin{equation}\label{eq:local-Hausdorff}
  d_{\mathrm H}\!\bigl(
      \mathcal A\cap\mathcal U,\;
      \mathcal V(Q^{(n)})\cap\mathcal U\bigr)
  \;\xrightarrow{\mathbb P}\;0.
\end{equation}

Denote \(\mathcal A^{n}_S:=\mathcal V(Q^{(n)})\).
To establish \eqref{eq:local-Hausdorff} we verify, for a suitably chosen
\(\mathcal U\),

\begin{equation}\label{eq:two-limits}
\sup_{z\in\mathcal A^{n}_S\cap\mathcal U}
          \inf_{x\in\mathcal A\cap\mathcal U}\|x-z\|\;\xrightarrow{\mathbb P} 0,
\qquad
\sup_{x\in\mathcal A\cap\mathcal U}
          \inf_{z\in\mathcal A^{n}_S\cap\mathcal U}\|x-z\|\;\xrightarrow{\mathbb P} 0.
\end{equation}

\medskip
\noindent{\it First limit in~\eqref{eq:two-limits}.}
Fix any bounded open neighborhood \(\mathcal U\) of \(x_0\).
Assume by contradiction that the first limit in
\eqref{eq:two-limits} fails: then there exist \(\varepsilon>0\) and a
subsequence \(z_{n_k}\in\mathcal A^{n_k}_S\cap\mathcal U\) such that
\(
\inf_{x\in\mathcal A\cap\mathcal U}\|x-z_{n_k}\|\ge\varepsilon.
\)
Because \(\overline{\mathcal U}\) is compact, \(z_{n_k}\) admits a further
subsequence converging to some \(z_\infty\in\overline{\mathcal U}\).
Since \(Q^{(n)}\to P\) and \(Q^{(n_k)}(z_{n_k})=0\), continuity yields
\(P(z_\infty)=0\); hence \(z_\infty\in\mathcal A\cap\overline{\mathcal U}\).
But then
\(
\|z_{n_k}-z_\infty\|\to0,
\)
contradicting the choice of \(\varepsilon\).
Thus the first limit in \eqref{eq:two-limits} holds.

\medskip\noindent{\it Second limit in~\eqref{eq:two-limits}.}
Factor \(Q^{(n)}=\lambda_n Q^{(n)}_1\cdots Q^{(n)}_m\) with
\(\|Q^{(n)}_i\|=1\).
By compactness, along a subsequence
\((\lambda_n,Q^{(n)}_1,\dots,Q^{(n)}_m)\to(\lambda,P_1,\dots,P_m)\) and
\(P=\lambda P_1\cdots P_m\).
Let \(P_1,\dots,P_s\) be the factors vanishing at the chosen regular
point \(x_0\).  Since
\(\mathcal V(\{P_{s+1},\cdots, P_m\})\) is closed and
\(x_0\notin\mathcal V(\{P_{s+1},\cdots, P_m\})\),
there exists a neighborhood
\(\mathcal U\subset\mathcal U'\) of \(x_0\) that does not intersect
\(\mathcal V(\{P_{s+1},\cdots P_m\})\).

Since \(\nabla P_{k}(x_0)\ne 0\) for \(k\le s\),
there exists a unit vector \(h\) with
\(\langle\nabla P_{k}(x_0),h\rangle\ne 0\) for all \(k\le s\).
Rotate so that \(h=e_{1}\) and write \(x=(v,w)\in\R\times\R^{d-1}\)
with \(x_0=(v_0,w_0)\).

\medskip\noindent
\emph{Implicit–function parameterization.}
For a polynomial \(R\) and \(x=(v,w)\), define the functional 
    $$ \Gamma: \R_{\le g_1}[ x_1, \dots, x_d] \times \R^{d-1}\times \R \ni (R,w, v)\mapsto  R(w, v). $$
Because \(\partial_v \Gamma(P_{k},w_0,v_0)\neq 0\) (\(k\le s\)),
the implicit function theorem yields open sets
\(\mathcal U_{k}\ni P_{k}\), \(\mathcal W_{k}\ni w_0\),
\(\mathcal V_{k}\ni v_0\) and \(\mathcal C^{1}\) maps
\(
  \phi_{k} : \mathcal U_{k}\times\mathcal W_{k}\to\mathcal V_{k}
\)
such that
\[
   R(v,w)=0 \quad 
   \;\Longleftrightarrow\; \quad
   v = \phi_{k}(R,w)
   \quad
   (R,w,v)\in\mathcal U_{k}\times\mathcal W_{k}\times\mathcal V_{k}.
\]
Because \(Q^{(n)}_{k} \to P_{k}\), for \(n\) large
\(Q^{(n)}_{k}\in\mathcal U_{k}\) and
\(Q^{(n)}_{k}(v,w)=0 \iff v=\phi_{k}(Q^{(n)}_{k},w)\).

As a consequence, letting $ \mathcal{U}=\mathcal{V} \times \mathcal{W} $  be an open neighborhood of $(w_0,v_0)$ compactly contained in 
$  \bigcap_{k=1}^s \mathcal{V}_k \times \mathcal{W}_k \cap \mathcal{U}' $,  it follows that 
$$ Q^{(n)}(v,w)=0 \quad \iff \left(  v=\phi_1(Q_1^{(n)},w) \ {\rm or} \ \dots \ {\rm or} \ v=\phi_s(Q_s^{(n)},w) \right)$$
for all $(v,w)\in \mathcal{U}$. 
Hence, 
\begin{align*}
    \sup_{x\in \mathcal{A}\cap \mathcal{U}}   \inf_{ z\in \mathcal{A}^{n}_S \cap \mathcal{U}}\| x-z\| &= \sum_{k=1}^s \sup_{w\in \mathcal{W}}   \inf_{ (v',w')\in \mathcal{A}^{n}_S \cap \mathcal{U}}\| w-w' \|+ |\phi_k(Q^{(n)}_k,w) -v'| \\
    &\leq \sum_{k=1}^s \sup_{w\in \mathcal{W}}  |\phi_k(Q^{(n)}_k,w) -\phi_k(P_k,w)| \xrightarrow{\mathbb P} 0,
\end{align*}
where the limit follows from the continuity of $\phi_k$. 
\end{proof}

\end{document}